\newcommand{\arxiv}[1]{\href{http://www.arXiv.org/abs/#1}{arXiv:#1}}
\newtheorem{theorem}{Theorem}
\newtheorem{lemma}[theorem]{Lemma} 
\newtheorem{definition}[theorem]{Definition}
\newtheorem{proposition}[theorem]{Proposition}
\newtheorem{propdef}[theorem]{Proposition-Definition}
\newtheorem{corollary}[theorem]{Corollary}
\newtheorem{remark}{Remark}
\newtheorem{example}{Example}
\newtheorem{myalgorithm}{Algorithm}
\newcommand{\mysup}{\mathop{\text{\Large$\vee$}}}
\newcommand{\myinf}{\mathop{\text{\Large$\wedge$}}}
\newcommand{\objective}{g}
\newcommand{\spectral}{\phi}
\def\ltr{\text{``}}
\def\rtr{\text{''}}
\def\digr{{\mathcal D}}
\def\Bipdig{{\mathcal G}}
\def\RR{\mathbb{R}_{\max}}
\def\Germ{\mathbb{G}_{\max}}
\def\ZGerm{-\infty_\mathbb{G}}
\def\Rmax{\mathbb{R}_{\max}}
\def\Rmin{\mathbb{R}_{\min}}
\def\RRbar{\overline{\mathbb{R}}}
\def\RRmax{\RRbar_{\max}}
\def\R{\mathbb{R}}
\def\diez{\sharp}
\def\supp{\operatorname{supp}}
\def\stratmax{|S|}
\def\stratmin{|T|}
\def\MPGstrong{\operatorname{MPGI*}}
\def\MPGinteger{\operatorname{MPGI}}
\def\leqlex{\leq_{\makebox{lex}}}
\newcommand{\mymax}{\mathop{\text{\Large$\vee$}}}%
\newcommand{\mymin}{\mathop{\text{\Large$\wedge$}}}%
\begin{document} 
%\begin{frontmatter} 
 
\title{Tropical linear-fractional programming and parametric mean payoff games} 
 
\author[Gaubert]{St\'ephane Gaubert}

\address{INRIA and Centre de Math\'ematiques Appliqu\'ees, 
\'Ecole Polytechnique. Postal address: CMAP, \'Ecole Polytechnique, 
91128 Palaiseau C\'edex, France} 
\email[Gaubert]{Stephane.Gaubert@inria.fr}
 
\author{Ricardo D. Katz} 
\address{CONICET. Postal address: Instituto de Matem\'atica ``Beppo Levi'', 
Universidad Nacional de Rosario, Av. Pellegrini 250, 2000 Rosario, Argentina} 
\email{rkatz@fceia.unr.edu.ar}

\author[Sergeev]{Serge\u{\i} Sergeev}
%\thanks[Sergeev]{Supported by the EPSRC grant RRAH12809 and the RFBR-CNRF grant 11-01-93106. This work was initiated when this author was with the School of Mathematics at the University of Birmingham.} 
\address{INRIA and Centre de Math\'ematiques Appliqu\'ees, 
\'Ecole Polytechnique. Postal address: CMAP, \'Ecole Polytechnique, 
91128 Palaiseau C\'edex, France} 
\email{sergeev@cmap.polytechnique.fr}

\thanks{The first author was partially supported by the
Arpege programme of the French National Agency of Research (ANR), 
project ``ASOPT'', number ANR-08-SEGI-005 and by the Digiteo project
DIM08 ``PASO'' number 3389. The third author was supported by the EPSRC grant RRAH12809 and the RFBR-CNRF grant 11-01-93106. This work was initiated when this author was with the School of Mathematics at the University of Birmingham.} 
 
\begin{abstract} 
Tropical polyhedra have been recently used to represent 
disjunctive invariants in static analysis.
To handle larger instances, tropical analogues of
classical linear programming results need to be developed.
This motivation leads us to study the tropical analogue
of the 
classical linear-fractional programming problem. 
We construct an associated parametric mean payoff game
problem, and show that the optimality of a given point, or the
unboundedness of the problem, can be certified by exhibiting a
strategy for one of the players having certain infinitesimal properties
(involving the value of the game and its derivative) that we
characterize combinatorially. 
We use this idea to design a
Newton-like algorithm to solve tropical linear-fractional programming problems,
by reduction to a sequence of auxiliary mean payoff game problems. 
\end{abstract} 
 
\keywords{Mean payoff games, tropical algebra, linear programming, linear-fractional programming, Newton iterations, 
Lagrange multipliers, optimal strategies}  
 
\maketitle
 
\section{Introduction}\label{s:introduction}

\subsection{Motivation from static analysis}
Tropical algebra is the structure in which the set of real numbers,
completed with $-\infty$, is equipped with the
``additive'' law $\ltr a+b\rtr:=a\vee b=\max(a,b)$ 
and the ``multiplicative''
law $\ltr ab\rtr:=a+b$.
The max-plus or tropical analogues of convex sets have been studied
by a number of authors~\cite{Zim-77,CG:79,maxplus97,LMS-01,CGQ-04,DS-04,BriecHorvath04,joswig04}, under various names (idempotent spaces,
semimodules, $\mathbb{B}$-convexity, extremal convexity), with
different degrees of generality, and various motivations. 

In the recent work~\cite{AGG08}, Allamigeon, Gaubert and Goubault have
used tropical polyhedra to compute disjunctive invariants in static
analysis.
A general (affine) tropical polyhedron can be represented as
\begin{align}\label{e-tpol}
P:=\{ x\in (\R\cup\{-\infty\})^n\mid \big(\mymax_{j\in[n]} (a_{ij}+x_j)\big)\vee c_i\leq \big(\mymax_{j\in[n]} (b_{ij}+x_j)\big)\vee d_i  ,  \forall i\in[m] \}  .
\end{align}
Here, we use the notation $[n]:=\{1,\ldots ,n\}$,
and the parameters $a_{ij}$, $b_{ij}$, $c_i$ and $d_i$
are given, with values in $\R\cup\{-\infty\}$. The analogy with
classical polyhedra becomes clearer with the tropical notation,
which allows us to write the constraints as $\ltr Ax+c\leq Bx+d\rtr$,
to be compared with classical systems of linear inequalities,
$Ax\leq d$ (in the tropical setting, we need to consider affine functions 
on both sides of the inequality due to the absence of opposite law
for addition). The previous representation of $P$ is the analogue
of the {\em external} representation of polyhedra, as the intersection
of half-spaces. As in the classical case, tropical polyhedra
have a dual (internal) representation, which involves extreme points and
extreme rays. The tropical analogue of
Motzkin double description method allows one to pass from one
representation to the other~\cite{AGG10}.

Disjunctive invariants arise naturally when analyzing sorting algorithms
or in the verification of string manipulation programs. 
The well known {\tt memcpy} function of C is discussed
in~\cite{AGG08} as a simple illustration: when copying
the first {\tt n} characters of a string buffer {\tt src} to a string buffer
{\tt dst}, the length {\tt len\_dst} of the latter buffer may
differ from the length {\tt len\_src} of the former, for if {\tt n}
is smaller than {\tt len\_src}, the null terminal character
of the buffer {\tt src} is not copied. However, the relation
$\min(\mathtt{len\_src},\mathtt{n}) = \min(\mathtt{len\_dst},\mathtt{n})$
is valid. This can be expressed geometrically by saying that
the vector  
$(-\mathtt{len\_src},-\mathtt{len\_dst})$
belongs to a tropical polyhedron. 
Several examples of programs of a disjunctive nature, which
are analyzed by means of tropical polyhedra, can be found in~\cite{AGG08,AllamigeonThesis}, in which the tropical analogue of the classical polyhedra-based
abstract interpretation method of~\cite{CousotHalbwachs78-POPL} has been developed.

The comparative interest of tropical polyhedra is illustrated in
Figure~\ref{f:abstractions}, which gives a simple fragment of code
in which the tropical invariant is tighter. Note that there is still
an over-approximation in the tropical case, because the transfer function
considered here is discontinuous
 (tropical polyhedra share
with classical polyhedra the property of being connected, and therefore
cannot represent exactly such discontinuities). Such tropical
invariants can be obtained automatically via the methods of~\cite{AGG08,AllamigeonThesis}, which rely on the tropical analogue of the double description algorithm~\cite{AGG10}, allowing one to obtain the vertices of a tropical polyhedron from a
family of defining inequalities, and vice versa. 

As in the case  of classical polyhedra,   
the scalability of the approach is inherently limited by the 
exponential blow up of the size of representations of 
polyhedra, since the number of vertices or of defining inequalities
can be exponential in the size of the input data~\cite{AGK-09,AGK-10}.

The complexity of earlier polyhedral approaches
led Sankaranarayanan, Colon, Sipma and Manna
to introduce the method of {\em templates}~\cite{Sriram1,Sriram2}. 
In a nutshell, a template
consists of a finite set  $\mathcal{T}=\{\objective_1, \ldots,\objective_m\}$
of linear forms on $\R^n$. 
The latter define a parametric family of polyhedra
\[
P_\alpha=\{x\in \R^n\mid \objective_k(x) \leq \alpha_k\enspace ,\enspace k\in [m]\} 
\]
with precisely
$m$ degrees of freedom $\alpha_1,\ldots,\alpha_m\in \R\cup\{+\infty\}$. 
The classical domains of boxes or the domain of {\em zones} 
(potential constraints)~\cite{PhDMine} are recovered by 
incorporating in the template the linear forms 
$\objective(x)=\pm x_i$ or $\objective(x)=x_i-x_j$, 
respectively. 
Fixing the template, or changing it
dynamically while keeping $m$ bounded, avoids the exponential blow up.

The method of~\cite{Sriram2} relies critically on linear programming, 
which allows one to evaluate quickly the fixed point functional 
of abstract interpretation. However, 
the precision of the invariants remains limited
by the linear nature of templates, 
and it is natural to ask whether the machinery 
of templates carries over to the non-linear case. 

\begin{figure}
\begin{center}
\begin{tabular}{ccc}
\begin{minipage}{0.20\textwidth}
\begin{lstlisting}
if x > y then 
  z:=x;
else
  z:=y+1;
\end{lstlisting}%
\end{minipage}&
\input{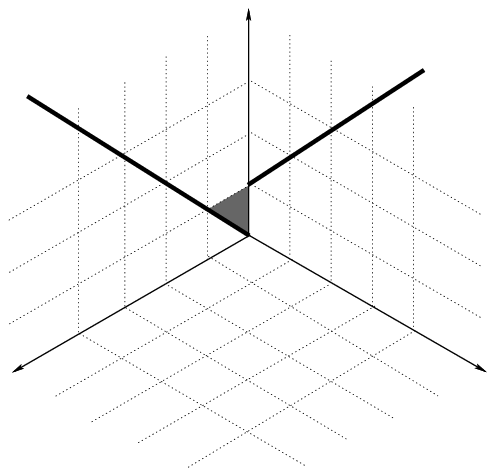} &
\input{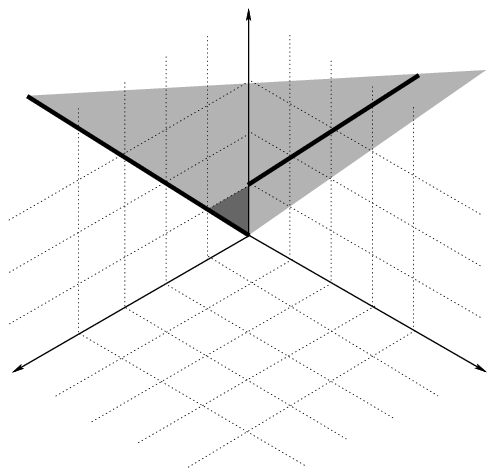}\\[-2em]
& $\begin{array}{l}
x\leq z\\
y\leq \max(x,z-1)\\
z\leq \max(x,y+1)
\end{array}$
& $\begin{array}{l}
x\leq z\\
y\leq z 
\end{array}$
\end{tabular}
\end{center}
\caption{The tropical polyhedral abstraction of a transfer function, following~\cite{AGG08}, versus the classical 
polyhedral abstraction. The optimal invariant is represented by the union of two half-lines. The tropical over-approximation of this behavior (union of the triangle in dark gray and of the half-lines, left) is more accurate than the classical polyhedral one (cone in light gray, right).
}
\label{f:abstractions}
\end{figure}

The formalism of templates has been extended to the non-linear
case by Adj\'e, Gaubert and Goubault~\cite{AssaleGG},
who considered specially quadratic templates, 
the linear programming methods of~\cite{Sriram2} being replaced by semidefinite programming, 
thanks to Shor's relaxation. More generally,
every tractable subclass of optimization problems yields
a tractable template. 

In order to compute disjunctive invariants,
Allamigeon, Gaubert and Goubault suggested
to develop a generalization of the template method to the case of tropical
polyhedra. As a preliminary step, the relevant results of linear programming
must be tropicalized: this is the object of the present
paper.

In particular, comparing the expressions of $P$ and $P_\alpha$,
we see that the classical linear forms %
must now be replaced by the differences of tropical affine forms
\begin{align}\label{theobjective}
\objective(x)= 
\big(\mymax_{j\in [n]} (p_j+x_j)\vee r\big)- \big(\mymax_{j\in [n]} (q_j+x_j)
\vee s \big) 
\end{align}
where $p_j$, $r$, $q_j$ and $s$ are given parameters with values in $\R\cup\{-\infty\}$.

\subsection{The problem}

In this paper, 
we study the following {\em tropical linear-fractional programming problem}:
\begin{align}\label{pblp}
\begin{split}
&\text{minimize }\quad  \objective(x) \\
&\text{subject to:}\quad   x\in P 
\end{split} 
\end{align}
where $P$ is given by~\eqref{e-tpol} and $\objective$ 
is given by~\eqref{theobjective}.
This is the tropical analogue of the classical linear-fractional programming problem
\[
\begin{split}
&\text{minimize } \quad (px+r)/(qx+s) \\
&\text{subject to:}\quad Ax+c\leq Bx+d \; ,\;
x\geq 0 \; , \; x\in \R^n 
\end{split}
\]
where $p$, $q$ are nonnegative vectors, $r$, $s$ are nonnegative scalars,
and $A$, $B$, $c$, $d$ are matrices and vectors. The constraint $\ltr x\geq 0\rtr$ is
implicit in~\eqref{pblp}, since any number is ``nonnegative'' (i.e.\ $\geq -\infty$) in the tropical world.

Problem~\eqref{pblp} includes as special cases 
\begin{align}\label{special1}
\objective(x)=\mymax_{j\in [n]} (p_j+x_j)
\qquad\text{and}\qquad 
\objective(x)=- \mymax_{j\in [n]} (q_j+x_j) 
\end{align}
(take $q_j\equiv-\infty$, $r=-\infty$ and $s=0$, or $p_j\equiv-\infty$, $s=-\infty$ and $r=0$). According to the terminology of~\cite{BA-08},
the latter may be thought
of as the tropical analogues of the linear programming problem. 
However, optimizing the more general fractional objective function~\eqref{theobjective} appears to be needed in a number of basic applications. 
In particular, in static analysis,
we need typically to compute the
tightest inequality of the form $x_i\leq K+x_j$ satisfied by the elements
of $P$. This fits in the general form~\eqref{theobjective}, but not in
the special cases~\eqref{special1}. 

\subsection{Contribution}
A basic question in linear programming is to certify the optimality
of a given point. This is classically done by exhibiting a feasible
solution of the dual problem (i.e.\ a vector of Lagrange multipliers)
with the same value. There is no such a simple result in the tropical
setting, because as remarked in~\cite{GK-09}, 
there are (tropically linear) inequalities which can be logically deduced from 
some finite system of (tropically linear) inequalities but which cannot be 
obtained by taking (tropical) linear combinations of the inequalities of 
the system. In other words, the usual statement of Farkas
lemma is not valid in the tropical setting. However, 
recently Allamigeon, Gaubert and Katz~\cite{AGK-10} established 
a tropical analogue of Farkas lemma, 
building on~\cite{AGG-10}, in which Lagrange
multipliers are replaced by strategies of an associated mean payoff game.
We use the same idea here, and show in Subsection~\ref{ss:certificates} (Theorem~\ref{1st-cert} below) that the
optimality of a solution can be (concisely) certified by exhibiting a strategy
of a game, having certain combinatorial properties. 
Similarly, whether the value
of the tropical linear-fractional programming problem is unbounded can
also be certified in terms of strategies (Theorem~\ref{2nd-cert}).

The second ingredient is to think of the tropical linear-fractional programming
problem as a parametric mean payoff game problem. Then,
the tropical linear-fractional programming problem reduces to the computation of
the minimal parameter for which the value of the game is nonnegative. 
As a function of the parameter, this value is  
piecewise-linear and $1$-Lipschitz, 
see Subsection~\ref{ss:specf} for a more detailed description. 

The main contribution of this paper is a Newton-like method, 
where at each iteration we select a strategy playing the role of derivative 
(whose existence is implied by the fact that 
the current feasible point is not optimal).
This defines a {\em one player} parametric game problem, 
and we show that the smallest value of the parameter making the
value zero can be computed in polynomial time for this subgame, 
by means of shortest path algorithms as described in Subsection~\ref{ss:kleene}.
The master algorithm (Algorithm~\ref{a:pos-newton}) requires 
solving at each iteration an auxiliary mean payoff game, 
see Subsections~\ref{ss:left-optim} and~\ref{ss:germs}. 
Mean payoff games can be solved either by value iteration,
which is pseudo-polynomial, or by policy iteration, for which
exponential time instances have been recently constructed
in~\cite{Friedmann-AnExponentialLowerB}, 
although the algorithm is fast on typical examples. 
The number of Newton type iterations of the Newton-like algorithm
has a trivial exponential bound (the number of strategies), and we
show that the algorithm is pseudo-polynomial, see Theorem~\ref{posnewt-comp}.
Although this algorithm seems to behave well on typical examples, 
see in particular Subsection~\ref{SectionExampleMin},
some further work would be needed to assess its worst case
complexity (its behavior is likely to be similar to the one
of policy iteration, see Subsection~\ref{ss:NumExp} 
for a preliminary account of numerical experiments). 

\subsection{Related work}
Butkovi\v{c} and Aminu~\cite{BA-08} studied the special 
cases~\eqref{special1}, see also~\cite[Chapter 10]{But:10}. At each iteration, 
they solve a feasibility problem (whether a tropical polyhedron is non-empty), 
which is equivalent to checking whether 
a mean payoff game is winning for one of the players. 
However, their algorithm does not involve a Newton-like iteration,
but rather a dichotomy argument. 
In~\cite{BA-08} the number of calls to a mean payoff oracle, 
whose implementation relies on the alternating method of~\cite{CGB-03}, 
depends on the size of the integers in the input, 
whereas the number of calls in the present algorithm 
can be bounded independently of these, just in terms of strategies. 

A different approach to tropical linear programming was developed 
previously in the works of U. Zimmermann~\cite{Zim:81} 
and K. Zimmermann~\cite{Zim-05}. This approach,
which is based on residuation theory, 
and works over more general idempotent semirings,
identifies important special cases in which the solution
can be obtained explicitly (and often, in linear time). 
However,  it cannot be applied to our more general formulation, 
in which there is little hope to find similar explicit solutions. 

Newton methods for finding the least fixed point of nonlinear 
functions are also closely related to this paper. 
Such methods were developed in~\cite{Policy1,ESOP07,Seidl2} to solve monotone fixed point
problems arising in abstract interpretation.
Esparza et al.~\cite{esparza:approximative} develop such methods for monotone systems of 
min-max-polynomial equations. 
They seek for the least fixed point of a function whose
components are max-polynomials or min-polynomials, 
showing that their Newton methods have linear convergence at least. 
The class of functions considered there is considerably more
general than the tropical linear forms appearing here,  but the
fixed point problems considered in~\cite{esparza:approximative} appear
to be of a different nature. It would be interesting, however,
to connect the two approaches. 

\subsection{Further motivation} 

Tropical polyhedra have been used in~\cite{katz05} to determine
invariants of discrete event systems. 
Systems of constraints equivalent to the ones defining tropical polyhedra
have also appeared
in the analysis of delays in digital circuits, 
and in the study of scheduling problems with both 
``and'' and ``or'' constraints~\cite{mohring}. 
Such systems have been studied by Bezem, Nieuwenhuis, 
and Rodr\'\i guez-Carbonell~\cite{bezem2,bezemjournal}, 
under the name of the ``max-atom problem''. 
The latter is motivated by SAT Modulo theory solving, 
since conjunctions of max-atoms determine a 
remarkable fragment of linear arithmetic. 
Tropical polyhedra also turn out to be
interesting mathematical objects in their own right~\cite{DS-04,joswig04}.
A final motivation arises from mean payoff games, the complexity
of which is a well known open problem: a series of works
show that a number of problems which can 
be expressed in terms of tropical polyhedra
are polynomial time equivalent to mean payoff games 
problems~\cite{mohring,DG-06,AGG-10,bezemjournal,AGK-10}. 

The results of the present paper rely on~\cite{AGG-10,AGK-10}. 

\section{Preliminaries}

In this section, we recall some definitions and results
needed to describe the present tropical linear-fractional
programming algorithm.

We start by introducing deterministic mean payoff games played
on a finite bipartite digraph, see Subsection~\ref{mpg-start}. 
We then summarize some elements of the operator approach to such games,
in including a theorem of~\cite{Koh-80}, 
Theorem~\ref{t:inv-half} below, implying that their dynamic programming 
operators $f$ have invariant half-lines $(\chi, v)$. 
These invariant half-lines determine the ultimate growth of the orbits
$f^k(x)/k$, known as the cycle-time vector $\chi(f)$ of $f$. 
They also determine a pair of optimal strategies, 
and the winning nodes of the players
($\{i\mid \chi_i(f)\geq 0\}$ and $\{i\mid \chi_i(f)<0\}$) in the associated game. 

Then, we recall in Subsection~\ref{trop-mpg} the correspondence
between tropical polyhedra and mean payoff games, along the lines
of~\cite{AGG-10}: 
Theorem~\ref{chi-axbx} shows that $i$ is a winning node $(\chi_i(f)\geq 0)$
if, and only if, the associated tropical two-sided system of inequalities 
has a solution $x$ whose $i$th coordinate is finite $(x_i\neq -\infty)$.
We also recall the max-plus and min-plus representations of min-max 
functions, together with the combinatorial characterization,
in terms of maximal or minimal cycle means, of the 
cycle time vector for one-player games. This will be used
to construct optimality and unboundedness certificates
in Subsection~\ref{ss:certificates}. 

\subsection{Mean payoff games and min-max functions}\label{mpg-start}

Consider a two-player deterministic game where the players, 
called ``Max'' and ``Min'', make alternate moves 
of a pawn on a weighted bipartite digraph $\Bipdig$. 
The set of nodes of $\Bipdig$ is the disjoint
union of the nodes $[m]$ where Max is active, 
and the nodes $[n]$ where Min is active. 
When the pawn is in node $k\in [m]$ of Max, 
he must choose an arc in $\Bipdig$
connecting node $k$ with some node $l\in [n]$ of Min, 
and while moving the pawn along this arc, 
he receives the weight $b_{kl}$ of the selected arc as payment from Min. 
When the pawn is in node $j\in [n]$ of Min, 
she must choose an arc in $\Bipdig$
connecting node $j$ with some node $i\in [m]$ of Max, 
and pays $-a_{ij}$ to Max, where $-a_{ij}$ is the weight of the selected arc. 
We assume that $b_{kl},a_{ij}\in\R$. Moreover, certain moves may be prohibited,
meaning that the corresponding arcs are not present in $\Bipdig$.
Then, we set $b_{kl}=-\infty$ and $a_{ij}=-\infty$. Thus, 
the whole game is equivalently defined by two $m\times n$ matrices
$A=(a_{ij})$ and $B=(b_{kl})$ with entries in $\R\cup\{-\infty\}$.
We make the following assumptions, 
which assure that both players have at least one move allowed in each node.

{\bf Assumption 1.} For all $k\in[m]$ there exists $l\in [n]$
such that $b_{kl}\neq -\infty$.

{\bf Assumption 2.} For all $j\in[n]$ there exists $i\in[m]$
such that $a_{ij}\neq -\infty$.

A {\em general strategy} for a player (Max or Min) is a function 
that for every finite history of a play ending at some node  
selects a successor of this node (i.e., a move of the player). 
A {\em positional strategy} for a player is a function that selects a 
unique successor of every node independently of the 
history of the play.

A strategy for player Max will be usually denoted by $\sigma$ and 
a strategy for player Min by $\tau$. Thus, 
a positional strategy for player Max is a function 
$\sigma\colon [m] \mapsto [n]$ 
such that $b_{i\sigma(i)}$ is finite for all $i\in[m]$, 
and a positional strategy for player Min is a function 
$\tau\colon [n] \mapsto [m]$
such that $a_{\tau(j)j}$ is finite for all $j\in [n]$. 

When player Max reveals his positional strategy $\sigma$, 
the play proceeds within the digraph $\Bipdig^{\sigma}$ 
where at each node $i$ of Max all but one arc $(i,\sigma(i))$ are removed. 
When player Min reveals her positional strategy $\tau$, 
the play proceeds within the digraph $\Bipdig^{\tau}$ 
where at each node $j$ of Min all but one arc $(j,\tau(j))$ are removed.

When player Max reveals his positional strategy $\sigma$
and player Min her positional strategy $\tau$, 
the play proceeds within the digraph $\Bipdig^{\sigma,\tau}$ 
where each node has a unique outgoing arc $(i,\sigma(i))$ or $(j,\tau(j))$. 
Thus, $\Bipdig^{\sigma,\tau}$ is a ``sunflower'' digraph, i.e., 
such that each node has a unique path to a unique cycle.  

The {\em infinite horizon version} of this mean payoff game  
starts at a node $j$ of Min\footnote{The game can start also 
at a node $i$ of Max, the requirement to be started by Min is 
for better consistency with min-max functions and two-sided tropical systems, 
see~\eqref{minmax-dynamic} and~\eqref{ineqs-equiv}.} 
and proceeds according to the strategies (not necessarily positional) 
of the players, who are interested in the value of 
average payment. More precisely, player Min wants to minimize 
\[
\Phi_{A,B}^{\sup}(j,\tau,\sigma):=\limsup_{k\to\infty}  \; (\sum_{t=1}^k -a_{i_t j_{t-1}}+b_{i_t j_t} )/k
\enspace, \quad j_0=j \enspace , 
\]
while player Max wants to maximize
\[  
\Phi_{A,B}^{\inf}(j,\tau,\sigma):=\liminf_{k\to\infty}  \; (\sum_{t=1}^k -a_{i_t j_{t-1}}+b_{i_t j_t} )/k
\enspace, \quad j_0=j \enspace ,
\]
where $j_1\in [n]$, $i_1\in [m]$, $j_2\in [n]$, $i_2\in[m]$, $\ldots$ 
is the infinite sequence of positions of the pawn resulting from 
the selected strategies $\tau $ and $\sigma $ of the players.
The next theorem shows that this game has a value. 
An analogue of this theorem concerning stochastic 
games was obtained by \cite{liggettlippman}.

\begin{theorem}[\cite{EM-79,GKK-88}]\label{value} 
For the mean payoff game whose payments are given by the matrices 
$A,B\in (\R\cup\{-\infty\})^{m\times n}$, 
where $A,B$ satisfy Assumptions 1 and 2, 
there exists a vector $\chi\in\R^n$ and a pair of
positional strategies $\sigma^*$ and $\tau^*$ such that 
\begin{enumerate}[(i)]
\item $\Phi_{A,B}^{\sup}(j,\tau^*,\sigma)\leq \chi_j$ for
all (not necessarily positional) strategies $\sigma$,
\item $\Phi_{A,B}^{\inf}(j,\tau,\sigma^*)\geq \chi_j$
for all (not necessarily positional) strategies $\tau$, 
\end{enumerate}
for all nodes $j$ of Min. 
\end{theorem}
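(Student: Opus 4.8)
The plan is to establish the existence of the value vector $\chi$ and the optimal positional strategies $\sigma^*,\tau^*$ by the standard two-step argument for deterministic mean payoff games, adapted to the present setting where arcs may be absent (encoded by $-\infty$ entries). First I would reduce to the case of positional strategies: by a classical exchange argument, if player Min has a strategy guaranteeing an upper bound $u_j$ on $\Phi_{A,B}^{\sup}(j,\cdot,\cdot)$ against all strategies of Max, then she has a positional one achieving the same bound, and symmetrically for Max. The cleanest route is to invoke the operator (dynamic programming) viewpoint already announced in the preliminaries: the min-max function $f$ associated with $(A,B)$ has, by Kohlberg's theorem (Theorem~\ref{t:inv-half}, to be stated just below), an invariant half-line $(\chi,v)$, i.e.\ $f(v+t\chi)=v+(t+1)\chi$ for $t$ large, and one reads off $\sigma^*,\tau^*$ as the strategies attaining the max and the min in the expression of $f$ at the points of this half-line. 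One then sets $\chi_j$ to be the $j$-th coordinate of the cycle-time vector $\chi(f)$.

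The core of the proof is then to verify properties (i) and (ii). For (ii): fix the positional strategy $\sigma^*$ of Max extracted from the invariant half-line. Against $\sigma^*$, Min faces a one-player (minimization) game on the digraph $\Bipdig^{\sigma^*}$; every play eventually enters a cycle, and the $\liminf$ average payoff of any play — positional or not — is bounded below by the minimal cycle mean reachable, which by construction of the invariant half-line equals $\chi_j$. The key computation here is the telescoping/potential argument: the relation witnessing that $\sigma^*$ attains the max in $f$ near the half-line yields, for the reduced operator $f^{\sigma^*}$, an inequality $f^{\sigma^*}(v) \ge v+\chi$ componentwise (superharmonicity), and iterating gives $(f^{\sigma^*})^k(v)\ge v+k\chi$; comparing with the definition of $\Phi_{A,B}^{\inf}$ along an arbitrary play and using that $v$ is a fixed finite vector yields the bound after dividing by $k$ and passing to the $\liminf$. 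Property (i) is the mirror image with $\tau^*$, $\Bipdig^{\tau^*}$, $\limsup$, and a subharmonicity inequality $f^{\tau^*}(v)\le v+\chi$. Assumptions~1 and~2 are exactly what is needed to guarantee that $\Bipdig^{\sigma^*}$ and $\Bipdig^{\tau^*}$ have no dead ends, so that infinite plays exist and the reduced operators are well defined (no spurious $-\infty$).

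The main obstacle is handling the $-\infty$ entries cleanly: one must argue that the optimal positional strategies $\sigma^*$, $\tau^*$ only ever select \emph{finite} arcs, so that the averages $\Phi^{\sup},\Phi^{\inf}$ are genuinely real-valued along the relevant plays and the telescoping estimates do not degenerate. This follows because the invariant half-line $(\chi,v)$ has $v\in\R^n$ finite, so the strategies achieving the extrema in $f(v+t\chi)$ cannot use an arc of weight $-\infty$ (that arc would contribute $-\infty$ and could not be optimal for Max, while for Min it is simply not a legal move by definition of a positional strategy); Assumptions~1 and~2 ensure the relevant maxima and minima in $f$ are attained by finite entries. A secondary point is to make sure the bound in (i) holds against \emph{all} strategies $\sigma$ of Max, including history-dependent ones, and likewise in (ii) against all $\tau$ of Min; this is where one uses that the superharmonicity inequality $(f^{\tau^*})^k(v)\le v+k\chi$ bounds the \emph{worst case over Max's responses}, so a fortiori over any particular non-positional $\sigma$. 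Once these points are in place, (i) and (ii) follow by the division-by-$k$ limit argument, completing the proof.
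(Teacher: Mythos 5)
The paper gives no proof of this theorem: it is quoted from \cite{EM-79,GKK-88}, whose original arguments are combinatorial (a finite/infinite-duration equivalence, resp.\ potential transformations). Your derivation via Kohlberg's invariant half-line is therefore a genuinely different route, but it is correct, and it is in fact the very derivation the paper sketches immediately afterwards for the closely related Theorems~\ref{value-cycletime} and~\ref{chi-duality} (``a standard corollary of Kohlberg's theorem'', with details delegated to~\cite{AGG-10}); no circularity arises since Theorem~\ref{t:inv-half} is a statement about nonexpansive piecewise affine maps proved independently of games. Your key steps are sound: choosing $\sigma^*,\tau^*$ to attain the extrema in $f(v+t\chi)$ uniformly for all large $t$ (possible because there are finitely many candidates, each giving an eventually affine function of $t$) yields $(f^{\tau^*})^k(w)=w+k\chi=(f^{\sigma^*})^k(w)$ with $w=v+T\chi$; since $(f^{\tau^*})^k_j(w)$ (resp.\ $(f^{\sigma^*})^k_j(w)$) is the supremum (resp.\ infimum) of the $k$-step payoff plus $w_{j_k}$ over \emph{all} plays consistent with $\tau^*$ (resp.\ $\sigma^*$), including those produced by history-dependent opponents, dividing by $k$ gives (i) and (ii); and Assumptions~1 and~2 together with the finiteness of $v$ and $\chi$ indeed force the extrema to be attained on finite arcs, so $\sigma^*,\tau^*$ are legal positional strategies. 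What your approach buys is uniformity with the rest of the paper's operator-theoretic framework; what the cited combinatorial proofs buy is independence from Kohlberg's theorem and an explicit finite-duration interpretation. Two cosmetic points: the sub/superharmonicity inequalities should be asserted at the base point $w=v+T\chi$ on the half-line rather than at $v$ itself, and the aside that ``every play eventually enters a cycle'' is not literally true for history-dependent strategies (a play revisits nodes but need not loop forever); neither affects the argument, since the telescoping estimate you actually invoke does not rely on it.
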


In other words, player Max has a positional strategy $\sigma^*$ which secures 
a mean profit of at least $\chi_j$ whatever is the strategy of player Min, 
and player Min has a positional strategy $\tau^*$ which secures a mean 
loss of no more than $\chi_j$ whatever player Max does.

A {\em finite duration version} of the mean payoff game considered
above can be also formulated. Again, 
it starts at a certain  node $j$ of Min and 
proceeds according to the strategies of the players  
(not necessarily positional), but stops immediately when a cycle 
$j_0\in [m] \rightarrow i_1\in [n] \rightarrow j_1\in [m] 
\rightarrow \cdots \rightarrow i_k\in [n] \rightarrow j_k=j_0$ 
is formed. Then, the outcome of the game is the mean weight per turn 
(so the length of a cycle may be seen as the number of nodes 
of Max or Min it contains) of that cycle:
\begin{equation}\label{PhiAB}
\Phi_{A,B}(j,\tau,\sigma):=(\sum_{t=1}^k -a_{i_t j_{t-1}}+b_{i_t j_t})/k \; .
\end{equation}
The ambition of Max is to maximize 
$\Phi_{A,B}(j,\tau,\sigma)$ while Min is seeking to minimize it. 

It was shown in~\cite[Theorem 2]{EM-79} that this finite duration version of 
the game has the same value as the infinite horizon version described above, 
and that there are positional optimal strategies which secure this value 
for both versions of the game. 
It follows by standard arguments that $\chi_j$ is determined uniquely by
\begin{equation}\label{val-exist}
\chi_j=\Phi_{A,B}(j,\tau^*,\sigma^*)=
\min_{\tau}\max_{\sigma} \Phi_{A,B}(j,\tau,\sigma)=
\max_{\sigma}\min_{\tau} \Phi_{A,B}(j,\tau,\sigma) \; ,
\end{equation}
where $\tau$ and $\sigma$ range over the sets of all strategies 
(not necessarily positional) for players Min and Max, respectively. 

The dynamic programming operator $f: \R^n\mapsto\R^n$ associated with 
the infinite horizon mean payoff game is defined by: 
\begin{equation}\label{minmax-dynamic}
f_j(x)=\mymin\limits_{k\in[m]}(-a_{kj}+\mymax\limits_{l\in[n]}(b_{kl}+x_l)) \enspace .
\end{equation}
This function, combining min-plus and max-plus linearity (see below in Subsection~\ref{trop-mpg}),
is known as a {\em min-max function}~\cite{CGG-99}.
Min-max functions are
isotone ($x\leq y\Rightarrow f(x)\leq f(y)$) and additively homogeneous
($f(\lambda+x)=\lambda+f(x)$). Hence, they are
nonexpansive in the sup-norm. Moreover, they are piecewise
affine ($\R^n$ can be covered by a finite number of
polyhedra on which $f$ is affine). We are interested
in the following limit ({\em cycle-time vector}):
\begin{equation}
\label{chi-limit}
\chi(f)=\lim_{k\to\infty} f^k(x)/k \enspace .
\end{equation}
The $j$th entry of the
vector $\chi(f)$ can be interpreted as the limit of the mean value of the
game per turn, as the horizon $k$ tends to infinity,
when the starting node is $j$. 
The existence of $\chi(f)$ follows from a theorem of Kohlberg. 

\begin{theorem}[\cite{Koh-80}]\label{t:inv-half}
Let $f:\R^n\mapsto\R^n$ be a nonexpansive
and piecewise affine function. Then, there exist $v\in\R^n$
and $\chi\in\R^n$ such that
\[
f(v+t\chi)=v+(t+1)\chi\enspace,\quad \forall t\geq T \enspace ,
\]
where $T$ is a large enough real number.
\end{theorem}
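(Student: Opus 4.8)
The plan is to establish Theorem~\ref{t:inv-half} (Kohlberg's theorem) by exploiting the fact that a nonexpansive piecewise affine map has only finitely many ``selection functions'' and that iterating such a map eventually locks into a single affine piece. First I would record the elementary consequences of the hypotheses that we actually use: since $f$ is nonexpansive in the sup-norm, the sequence $f^k(x)/k$ is asymptotically independent of $x$ (two orbits stay at bounded distance), so the candidate vector $\chi$ is well-defined once we exhibit it on one orbit; and since $f$ is piecewise affine, $\R^n$ is partitioned into finitely many polyhedral cells $C_1,\ldots,C_N$, and on each cell $C_s$ the map agrees with an affine map $f(x)=M_s x+c_s$, where nonexpansiveness forces each $M_s$ to be a substochastic-type matrix (entries $\geq 0$, row sums $\leq 1$ — in fact here the $M_s$ are $0/1$ matrices with exactly one $1$ per row, coming from the \textup{argmin}/\textup{argmax} selections in~\eqref{minmax-dynamic}), so in particular $\|M_s\|_\infty\le 1$.

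Next I would produce the half-line. The natural construction is to pick any starting point $x_0$, let $\chi:=\chi(f)=\lim_k f^k(x_0)/k$ (whose existence we may either take from the cited literature or re-derive; but a self-contained route is available), and then study the ``detropicalized'' or renormalized orbit $y_k:=f^k(x_0)-k\chi$. One shows $y_k$ stays bounded: along the orbit the affine piece used at step $k$ is some $M_{s(k)}$, and telescoping gives $f^k(x_0)=M_{s(k-1)}\cdots M_{s(0)}x_0+(\text{sum of }c\text{'s pushed forward})$; the cycle-time $\chi$ is exactly the average drift of this product, and boundedness of $y_k$ follows because the finitely many matrices $M_s$ together with $\chi$ satisfy a compatibility (each $M_s$ fixes $\chi$ up to the additive constant, i.e. $M_s\chi=\chi$ is forced on the relevant support). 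Having $\{y_k\}$ bounded and living in the closed set on which finitely many affine pieces are active, a pigeonhole/limit-point argument yields a cluster point $v$ and a tail along which the \emph{same} cell $C_s$ is used at every step; on that cell $f$ is affine with $M_s\chi=\chi$, and one deduces $f(v+t\chi)=v+(t+1)\chi$ for all $t$ beyond some threshold $T$. To upgrade ``along a subsequence'' to ``for all large $t$'', I would use monotonicity and homogeneity of $f$: once $v+t_0\chi$ lies in the interior of the eventual cell for one large $t_0$, the additive-homogeneity relation $f(\lambda+x)=\lambda+f(x)$ propagates the fixed-ray identity to all $t\ge t_0$, because increasing $t$ only translates the point further into the same recession cone.

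I expect the main obstacle to be the combinatorial bookkeeping that forces the orbit to \emph{stabilize} on a single affine selection, i.e. proving that the sequence of active cells $s(k)$ is eventually constant (or at least eventually cyclic with the right average), rather than merely having a recurrent cell. The nonexpansiveness gives compactness of the renormalized orbit but not, a priori, convergence; one must rule out the renormalized orbit oscillating forever between cells. The standard fix, which I would adopt, is to choose $v$ as a cluster point minimizing (or extremizing) some potential — e.g. take $v$ to be a limit point realizing $\liminf$ in an appropriate component, or invoke that the omega-limit set of the nonexpansive renormalized dynamics is itself a single periodic orbit by a result on nonexpansive maps on $\R^n$ in the sup-norm, then note the period must be $1$ after further renormalization because $M_s\chi=\chi$ collapses the periodic structure onto a ray. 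The delicate point is making the transition from the omega-limit set to the genuine half-line clean; everything else (nonexpansiveness estimates, the $0/1$ structure of the $M_s$, additive homogeneity) is routine.

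Since this is a cited result (\cite{Koh-80}), an acceptable shorter alternative is simply to invoke it, remarking that it applies because min-max functions~\eqref{minmax-dynamic} are nonexpansive and piecewise affine as noted above; I would include the sketch only to the extent the paper needs self-containedness.
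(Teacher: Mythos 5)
The paper offers no proof of this statement: it is imported verbatim from Kohlberg \cite{Koh-80}, so your fallback of simply invoking the citation is exactly what the authors do. Your sketch, however, has gaps that are not mere bookkeeping. The most serious is circularity: you define $\chi$ as the cycle-time limit \eqref{chi-limit}, but in the paper that limit's existence is a \emph{consequence} of the invariant half-line theorem, deduced afterwards from nonexpansiveness. For a general sup-norm nonexpansive map the limit $f^k(x)/k$ need not exist; piecewise affineness is precisely the hypothesis that rescues it, and proving the limit exists without already having the half-line is essentially the whole content of the theorem. Even granting $\chi$, the boundedness of $y_k=f^k(x_0)-k\chi$ does not follow: $f^k(x_0)/k\to\chi$ only gives $f^k(x_0)=k\chi+o(k)$, and your justification (each active matrix $M_s$ ``fixes $\chi$ on the relevant support'') is neither established nor true for the pieces a generic orbit visits. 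You also invoke additive homogeneity $f(\lambda+x)=\lambda+f(x)$ and monotonicity in the propagation step; these hold for min-max functions but are not hypotheses of the theorem as stated, and in any case translating by $t\chi$ is not translating by a multiple of the all-ones vector.

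The cluster-point construction itself fails on a trivial example: $f(x_1,x_2)=(x_2,x_1)$ is affine and nonexpansive with $\chi=0$, and its invariant half-lines are exactly the fixed points $v=(a,a)$; yet the renormalized orbit of $(0,1)$ is the $2$-periodic sequence $(0,1),(1,0),(0,1),\dots$, whose cluster points are not fixed points. In general the omega-limit set of a bounded orbit of a sup-norm nonexpansive map is a periodic orbit of period possibly greater than $1$, so a cluster point $v$ only yields $f^p(v+t\chi)=v+(t+p)\chi$ for some $p$, and collapsing $p$ to $1$ is a genuine additional argument (tied to the cyclicity of the critical graph), not a renormalization. Kohlberg's actual proof avoids orbits altogether: for $\alpha\in(0,1)$ take the unique fixed point $x_\alpha$ of the contraction $\alpha f$, note that $\alpha\mapsto x_\alpha$ is semi-algebraic because $f$ is piecewise affine, expand $x_\alpha=\chi/(1-\alpha)+v+o(1)$ as $\alpha\to 1^-$, and read off the invariant half-line from $f(x_\alpha)=x_\alpha/\alpha$ together with the fact that a piecewise affine map is eventually affine along a semi-algebraic curve going to infinity. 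If you want a self-contained argument, that is the route to take.
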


The function $t\mapsto v+t\chi$ is known as an {\em invariant half-line}.
Using the nonexpansiveness of $f$, one deduces that the limit~\eqref{chi-limit}
exists, is the same for all $x\in\R^n$ and is equal to the growth
rate $\chi$ of any invariant half-line.

Given fixed positional strategies $\tau$ and $\sigma$ for players Min and Max, 
respectively, we can consider the dynamic operators corresponding to the 
partial digraphs $\Bipdig^{\tau}$ and $\Bipdig^{\sigma}$.
These operators are max-only and min-only functions:
\begin{equation}\label{mm-only}
\begin{split}
f_j^{\tau}(x)&=-a_{\tau(j)j}+\mymax\limits_{l\in [n]} ( b_{\tau(j)l}+x_l ) \enspace ,\\
f_j^{\sigma}(x)&=\mymin\limits_{i\in [m]}(-a_{ij}+b_{i\sigma(i)}+x_{\sigma(i)})\enspace .
\end{split}
\end{equation}
They are the main subject of tropical linear algebra, 
see Subsection~\ref{trop-mpg}, where in particular we recall 
how their cycle-time vectors can be computed. 
Theorem~\ref{e:chi-duality-games} below relates these 
cycle-time vectors with the 
cycle time vector of the min-max function~\eqref{minmax-dynamic}. 

The following result can be derived as a standard corollary of Kohlberg's 
theorem. Indeed, we define a positional strategy $\tau$ of Min and
a positional strategy $\sigma$ of Max by the condition that
$f(v+t\chi)=f^\sigma(v+t\chi)=f^\tau(v+t\chi)$ for $t$ large
enough, where $t\mapsto v+t\chi$ is an invariant half-line.
These strategies are easily seen to be optimal for the mean payoff game.

\begin{theorem}[Coro.\ of~\cite{Koh-80}]\label{value-cycletime} 
For $f(x)$ given by~\eqref{minmax-dynamic},
the $j$th coordinate of $\chi(f)$ is the value of the mean payoff
game which starts at node $j$ of Min.
\end{theorem}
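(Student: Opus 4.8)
The plan is to deduce Theorem~\ref{value-cycletime} by combining Kohlberg's theorem (Theorem~\ref{t:inv-half}), the value existence result for mean payoff games (Theorem~\ref{value} together with~\eqref{val-exist}), and the sketch already given in the paragraph preceding the statement, namely the construction of positional strategies $\tau$ of Min and $\sigma$ of Max that agree with $f$ along an invariant half-line. First I would invoke Theorem~\ref{t:inv-half} to fix $v,\chi\in\R^n$ and $T\in\R$ with $f(v+t\chi)=v+(t+1)\chi$ for all $t\geq T$; since min-max functions are nonexpansive and piecewise affine, this applies to $f$ given by~\eqref{minmax-dynamic}. Then I would observe, as in~\eqref{chi-limit}, that $\chi(f)=\lim_k f^k(x)/k$ exists and equals this $\chi$, independently of $x$; so it suffices to show $\chi_j$ equals the game value starting at node $j$ of Min, for which I will use the characterization~\eqref{val-exist}, $\chi^{\mathrm{game}}_j=\min_\tau\max_\sigma \Phi_{A,B}(j,\tau,\sigma)=\max_\sigma\min_\tau\Phi_{A,B}(j,\tau,\sigma)$.

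Next I would carry out the strategy extraction. For $t$ large the piecewise-affine function $f$ is affine on a neighborhood of the half-line, and the minimum over $k\in[m]$ in~\eqref{minmax-dynamic} and each inner maximum over $l\in[n]$ are attained at indices that can be chosen constant for all $t\geq T'$ (finitely many selection functions, infinitely many $t$, a pigeonhole-type argument). This yields a positional strategy $\tau$ of Min (picking, for each $j$, an optimal $k=\tau(j)$) and, from the active max-arcs, a positional strategy $\sigma$ of Max, such that $f(v+t\chi)=f^\tau(v+t\chi)=f^\sigma(v+t\chi)$ for $t$ large, where $f^\tau,f^\sigma$ are the one-player operators~\eqref{mm-only}. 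Since $f^\tau\geq f$ (Min restricted to fewer options can only do worse, i.e.\ the value is larger) and $f^\sigma\leq f$ pointwise, and all three agree on the half-line, the one-player operators $f^\tau$ and $f^\sigma$ also have the same invariant half-line with growth rate $\chi$; hence $\chi(f^\tau)=\chi(f^\sigma)=\chi$.

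Then I would convert these cycle-time statements into game-value inequalities. For the operator $f^\tau$, which corresponds to $\Bipdig^\tau$ where Min has committed to $\tau$, the cycle-time vector $\chi(f^\tau)$ is, by the (standard) interpretation of one-player cycle-time vectors as maximal cycle means over $\Bipdig^\tau$, an upper bound for what Max can extract; combined with $\chi(f^\tau)=\chi$ this gives $\max_\sigma\Phi_{A,B}(j,\tau,\sigma)\leq\chi_j$, i.e.\ Min can guarantee a loss of at most $\chi_j$ using $\tau$. Symmetrically, $\chi(f^\sigma)=\chi$ gives $\min_\tau\Phi_{A,B}(j,\sigma,\tau)\geq\chi_j$, so Max can guarantee at least $\chi_j$ using $\sigma$. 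By~\eqref{val-exist} the two bounds sandwich the unique game value, forcing it to equal $\chi_j=\chi_j(f)$, as claimed. (Alternatively, one can bypass~\eqref{val-exist} and directly invoke Theorem~\ref{value}, matching $\sigma,\tau$ here with the optimal $\sigma^*,\tau^*$ there.)

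The main obstacle I anticipate is the step asserting that the optimizing indices in~\eqref{minmax-dynamic} can be chosen uniformly along the half-line for all large $t$, and that the resulting one-player operators inherit the same invariant half-line; this needs the observation that on the (eventually affine) region containing the half-line only finitely many linear pieces are active and that $f^\tau\geq f\geq f^\sigma$ pins the growth rates together. The remaining ingredient—that one-player cycle-time vectors coincide with optimal mean-weight cycles, so that $\chi(f^\tau)$ and $\chi(f^\sigma)$ really are the guaranteed payoffs—is the combinatorial characterization recalled in Subsection~\ref{trop-mpg}, which the excerpt says we may assume; modulo that, the proof is the routine sandwich argument. I would present it as a genuine corollary: state the half-line, extract $\sigma$ and $\tau$, note $\chi(f^\sigma)=\chi(f^\tau)=\chi(f)$, and quote Theorem~\ref{value} / equation~\eqref{val-exist} to conclude.
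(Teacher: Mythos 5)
Your proposal is correct and follows exactly the route the paper itself sketches in the paragraph preceding the theorem: extract positional strategies $\sigma$ and $\tau$ from the invariant half-line via $f(v+t\chi)=f^\sigma(v+t\chi)=f^\tau(v+t\chi)$ for large $t$, and verify their optimality by the sandwich $f^\sigma\leq f\leq f^\tau$ together with the cycle-mean characterization of one-player cycle-time vectors and the value existence result~\eqref{val-exist}. You merely fill in the details (the stabilization of argmin/argmax indices, the inheritance of the half-line) that the paper leaves as "easily seen."
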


In what follows, we shall use the following form of the value 
existence result~\eqref{val-exist}, 
which was proved in~\cite{gg0} as a corollary of the termination of
the policy iteration algorithm of~\cite{gg0,CGG-99},
see~\cite{DG-06} for a more recent presentation.
Alternatively, it can be quickly derived from~\cite{Koh-80}
(the derivation can be found in~\cite[Theorem~2.13]{AGG-10}).
This result has been known as the ``duality theorem''
in the discrete event systems literature.

\begin{theorem}[Coro.\ of~\cite{Koh-80},\ \cite{gg0}]
\label{chi-duality}
Let $A,B\in (\R\cup\{-\infty\})^{m\times n}$ satisfy
Assumptions 1 and 2, and let $S$ and $T$ be the sets of all
positional strategies for players Max and Min, respectively. Then,
\begin{equation}\label{e:chi-duality-games}
\max\limits_{\sigma\in S} \chi(f^{\sigma})=\chi(f)=
\min\limits_{\tau\in T} \chi(f^{\tau}) \enspace .
\end{equation}
\end{theorem}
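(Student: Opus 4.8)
The plan is to prove the two chains of inequalities
$\max_{\sigma\in S}\chi(f^\sigma)\le\chi(f)\le\min_{\tau\in T}\chi(f^\tau)$
by an elementary monotonicity argument, and then to produce, with the help of Kohlberg's theorem (Theorem~\ref{t:inv-half}), a single pair of positional strategies at which both bounds are attained. All inequalities between cycle-time vectors below are understood entrywise.

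For the easy inequalities, I would first note that for every positional strategy $\tau\in T$ one has $f\le f^\tau$ pointwise: in~\eqref{minmax-dynamic} the minimum over $k$ is at most the single term indexed by $k=\tau(j)$, which is exactly $f^\tau_j$ in~\eqref{mm-only}. Dually, for every $\sigma\in S$ one has $f^\sigma\le f$ pointwise, since in $f^\sigma_j$ each inner maximum has been replaced by its $l=\sigma(i)$ term. Because min-max functions and their max-only and min-only restrictions are all isotone, these inequalities iterate to $(f^\sigma)^k\le f^k\le (f^\tau)^k$ for all $k\ge 1$; dividing by $k$ and passing to the limit (the limits exist by the remark after Theorem~\ref{t:inv-half}, all three maps being nonexpansive and piecewise affine) gives $\chi(f^\sigma)\le\chi(f)\le\chi(f^\tau)$, hence $\max_{\sigma}\chi(f^\sigma)\le\chi(f)\le\min_{\tau}\chi(f^\tau)$.

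The substance of the proof is to build strategies $\sigma^*\in S$ and $\tau^*\in T$ with $\chi(f^{\sigma^*})=\chi(f)=\chi(f^{\tau^*})$. By Theorem~\ref{t:inv-half}, $f$ has an invariant half-line $t\mapsto v+t\chi$ with $v,\chi\in\R^n$ and $f(v+t\chi)=v+(t+1)\chi$ for all $t\ge T$; by the remark following that theorem, $\chi=\chi(f)$, and $v+t\chi$ has finite coordinates for every $t$. Along this half-line each term $b_{il}+v_l+t\chi_l$ is affine in $t$, so $\max_l(b_{il}+(v+t\chi)_l)$ is a convex piecewise-affine function of $t$ which for $t$ large coincides with one of its affine pieces; picking, for each Max node $i$, an index $\sigma^*(i)$ that realizes this eventual piece (necessarily with $b_{i\sigma^*(i)}\ne-\infty$, by Assumption~1) yields $\max_l(b_{il}+(v+t\chi)_l)=b_{i\sigma^*(i)}+(v+t\chi)_{\sigma^*(i)}$ for all large $t$, whence $f_j(v+t\chi)=f^{\sigma^*}_j(v+t\chi)$ for large $t$ by~\eqref{minmax-dynamic} and~\eqref{mm-only}. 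The right-hand side is now a minimum over $i$ of functions affine in $t$, so choosing an index $\tau^*(j)$ realizing its eventual affine piece (necessarily with $a_{\tau^*(j)j}\ne-\infty$, since Assumption~2 forces that minimum to be finite) gives $f_j(v+t\chi)=-a_{\tau^*(j)j}+b_{\tau^*(j)\sigma^*(\tau^*(j))}+(v+t\chi)_{\sigma^*(\tau^*(j))}$ for large $t$; and since $\sigma^*(\tau^*(j))$ realizes the maximum for node $\tau^*(j)$, the right-hand side also equals $f^{\tau^*}_j(v+t\chi)$. Thus $\sigma^*$ and $\tau^*$ are genuine positional strategies and $f^{\sigma^*}(v+t\chi)=f^{\tau^*}(v+t\chi)=v+(t+1)\chi$ for all large $t$, i.e.\ $t\mapsto v+t\chi$ is an invariant half-line of both the min-only map $f^{\sigma^*}$ and the max-only map $f^{\tau^*}$.

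Since $f^{\sigma^*}$ and $f^{\tau^*}$ are isotone and additively homogeneous, hence nonexpansive, the remark after Theorem~\ref{t:inv-half} applies to each of them and gives $\chi(f^{\sigma^*})=\chi=\chi(f^{\tau^*})$. Combined with the inequalities of the second paragraph, this forces $\max_{\sigma\in S}\chi(f^\sigma)=\chi(f^{\sigma^*})=\chi(f)$ and $\min_{\tau\in T}\chi(f^\tau)=\chi(f^{\tau^*})=\chi(f)$, which is~\eqref{e:chi-duality-games}. I expect the only genuinely delicate point to be the eventual stabilization, as $t\to\infty$, of the chosen index sets realizing the relevant maxima and minima along the half-line: this is exactly where the piecewise-affine structure (beyond mere nonexpansiveness) together with Kohlberg's theorem is indispensable, whereas the remaining steps are routine bookkeeping with monotonicity and additive homogeneity.
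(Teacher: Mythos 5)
Your proof is correct, and it follows exactly the route the paper indicates: the paper does not write out a proof of Theorem~\ref{chi-duality} but states that it "can be quickly derived from~\cite{Koh-80}" via the construction sketched just before Theorem~\ref{value-cycletime} (choosing $\sigma$ and $\tau$ so that $f(v+t\chi)=f^\sigma(v+t\chi)=f^\tau(v+t\chi)$ for $t$ large along an invariant half-line), which is precisely what you carry out, together with the routine monotonicity inequalities. The only point worth keeping explicit is the one you already flag, namely the eventual stabilization of the maximizing and minimizing indices along the half-line, which is where piecewise affineness is genuinely used.
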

This characterization of $\chi(f)$
should be compared with~\eqref{val-exist}. The latter shows that the
infinite horizon version of the game with limsup/liminf payoff has a value, whereas~\eqref{e:chi-duality-games} concerns the limit of the value of the
finite horizon version. Thus, in loose terms, the ``limit'' and ``value'' operations commute.
\subsection{Tropical linear systems and mean payoff games}\label{trop-mpg}

Max-only and min-only functions of the form~\eqref{mm-only} belong to
tropical linear algebra. Max-only functions are linear in the {\em
max-plus semiring} $\Rmax$, which is the set $\R\cup\{-\infty\}$ 
equipped with the operations of ``addition'' 
$\ltr a+b\rtr:=a\vee b$   
and ``multiplication'' $\ltr ab\rtr:=a+b$. For min-only functions, we use
the {\em min-plus semiring} $\Rmin$, i.e.\ the set $\R\cup\{+\infty\}$ 
equipped with the operations of ``addition'' 
$\ltr a+b\rtr:=a\wedge b$ 
and the same ``additive'' multiplication.
The setting in which both structures are considered
simultaneously has been called minimax algebra 
in~\cite{CG:79}. Then, we need
to allow the scalars to belong to the enlarged set 
$\RRbar:=\R\cup\{-\infty\}\cup \{+\infty\}$.
Note that in $\RRbar$,
$(-\infty)+(+\infty)=-\infty$ if the max-plus convention
is considered and $(-\infty)+(+\infty)=+\infty$ if the min-plus convention
is considered. 

The tropical operations are extended to matrices and vectors in the usual way. 
In particular, for any matrix $E=(e_{ij})$ and any vector $x$ of 
compatible dimensions:
\begin{equation}\label{DefMult}
\ltr (Ex)_{i}\rtr=\mysup_j e_{ij}+x_j\quad \text{(max-plus)}\; , \quad
\ltr (Ex)_{i}\rtr=\myinf_j e_{ij}+x_j\quad \text{(min-plus)}\; .
\end{equation} 
Max-plus and min-plus linear functions are mutually adjoint, or
{\em residuated}. Recall that for a
max-plus linear function from $\RRbar^n$ to $\RRbar^m$, 
given by $E\in \Rmax^{m\times n}$, the 
{\em residuated operator} $E^{\diez}$ from $\RRbar^m$ to $\RRbar^n$ is
defined by
\begin{equation}
\label{adiez}
(E^{\diez} y)_j:=\myinf_{i\in [m]} (-e_{ij}+y_i)\enspace ,
\end{equation}
with the convention $(-\infty)+(+\infty)=+\infty$.  
Note that this residuated operator, also known as
{\em Cuninghame-Green inverse}, 
is given by the multiplication of $-E^T$ by $y$ with the min-plus 
operations (here $E^T$ denotes the transposed of $E$), and that it 
sends $\RR^m$ to $\RR^n$ whenever $E$ does not have 
columns identically equal to $-\infty$. 

In what follows, concatenations such as $Ex$ should be understood as the 
multiplication of $E$ by $x$ with the max-plus operations, 
and concatenations such as $E^{\diez}y$ should be understood 
as the multiplication of $-E^T$ by $y$ with the min-plus operations 
(and the corresponding conventions for $(-\infty)+(+\infty)$). 

The term ``residuated'' refers to the property
\begin{equation}\label{res-prop}
Ex\leq y\Leftrightarrow   
x\leq E^{\diez} y\enspace ,
\end{equation}
where $\leq$ is the partial order on $\RR^m$ or $\RR^n$, 
which can be deduced from
\[  
e_{ij} + x_j \leq y_i \; \forall i, j
\Leftrightarrow 
x_j \leq -e_{ij} +y_i  \; \forall i, j \enspace .
\]
As a consequence, the residuated operator is crucial for max-plus 
two-sided systems of inequalities, because~\eqref{res-prop} implies: 
\begin{equation}\label{ineqs-equiv}
Ax\leq Bx\Leftrightarrow x\leq A^{\sharp}Bx \enspace .
\end{equation}
Writing the last inequality explicitly, we have
\begin{equation}\label{ineqs-expl}
x_j\leq\myinf_{k\in [m]}(-a_{kj}+\mysup_{l\in [n]}(b_{kl}+x_l))
\enspace , \quad\forall j\in [n]\enspace .
\end{equation}
Thus, we obtain the same min-max function as in~\eqref{minmax-dynamic}.

Moreover, positional strategies
$\sigma\colon [m] \mapsto [n]$ and
$\tau\colon [n]\mapsto [m]$ correspond
to affine functions $B^{\sigma}$ and $A_{\tau}$ defined by
\begin{equation}\label{ataubsigma}
(A_{\tau})_{ij}=
\begin{cases}
a_{ij} \; &\makebox{ if } i=\tau(j) ,\\
-\infty \; &\makebox { otherwise} ,
\end{cases}\quad
(B^{\sigma})_{ij}=
\begin{cases}
b_{ij}\; &\makebox{ if }  j=\sigma(i) ,\\
-\infty \; &\makebox { otherwise} .
\end{cases}
\end{equation}

Recasting~\eqref{e:chi-duality-games} in max(min)-plus algebra,
we obtain
\begin{equation}\label{e:chi-duality}
\max\limits_{\sigma\in S}\chi(A^{\sharp}B^{\sigma}) = \chi(A^{\sharp}B) = 
\min\limits_{\tau\in T}\chi(A^{\sharp}_{\tau}B) \enspace .
\end{equation}

The following result, obtained by Akian, Gaubert and Guterman,
relates solutions of $Ax\leq Bx$ and nonnegative coordinates
of $\chi(A^{\sharp}B)$. These coordinates correspond
to {\em winning nodes} of the mean payoff game: 
if the game starts in these nodes, 
then Max can ensure nonnegative profit with any strategy of Min. 

\begin{theorem}[{\cite[Th.~3.2]{AGG-10}}]\label{chi-axbx} 
Let $A,B\in\Rmax^{m\times n}$ satisfy Assumptions~1 and~2.
Then, $\chi_i(A^{\sharp}B)\geq 0$ if and only if
there exists $x\in\Rmax^n$ such that $Ax\leq Bx$
and $x_i\neq -\infty$.
\end{theorem}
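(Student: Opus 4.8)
The plan is to prove both implications directly, using the two characterizations of $\chi(A^\sharp B)$ available to us: the combinatorial/game-theoretic description via optimal positional strategies (Theorems~\ref{value} and~\ref{value-cycletime}), and the fixed-point/half-line description coming from Kohlberg's theorem (Theorem~\ref{t:inv-half}), together with the residuation identity~\eqref{ineqs-equiv}, which says that $Ax\leq Bx$ is equivalent to $x\leq f(x)$ where $f=A^\sharp B$ is the min-max function~\eqref{minmax-dynamic}.

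For the ``only if'' direction, assume $\chi_i(f)\geq 0$. I would take an invariant half-line $t\mapsto v+t\chi$ given by Theorem~\ref{t:inv-half}, so $f(v+t\chi)=v+(t+1)\chi$ for all $t\geq T$, with $\chi=\chi(f)$. Fix some $t\geq T$ and set $x:=v+t\chi$; then $f(x)=x+\chi\geq x$ on the coordinates where $\chi\geq 0$, and in particular $f(x)_i\geq x_i$. This is not quite $x\leq f(x)$ globally, so the honest approach is to use the optimal positional strategy $\sigma^*$ of Max: for $f^{\sigma^*}$ (a min-only, hence min-plus linear, map) one has $\chi(f^{\sigma^*})=\chi(f)$ coordinatewise is false in general, but $\chi(f^{\sigma^*})\leq\chi(f)$; what we really want is that Max's strategy secures a nonnegative value starting from $i$. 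Concretely, restrict attention to the subgraph reachable under $\sigma^*$ from node $i$: since $\chi_i(f)\geq 0$, every cycle in $\Bipdig^{\sigma^*}$ reachable from $i$ has nonnegative mean weight (this is the combinatorial content of Theorem~\ref{value} applied to the finite-duration game, cf.~\eqref{val-exist}). Then one builds a finite potential (a ``longest-path'' potential) $x$ on the Min-nodes of this subgraph, finite on the reachable set, $-\infty$ elsewhere, satisfying $x\leq A^\sharp B^{\sigma^*} x\leq A^\sharp B x=f(x)$, i.e.\ $Ax\leq Bx$ by~\eqref{ineqs-equiv}, with $x_i\neq-\infty$. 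The existence of such a potential is exactly the max-plus spectral fact that a matrix all of whose cycles are nonnegative admits a finite sub-eigenvector on any class, applied here to the min-plus matrix $-\big(A^\sharp B^{\sigma^*}\big)^{\!T}$ or, more cleanly, by a shortest/longest path argument on $\Bipdig^{\sigma^*}$ as in Subsection~\ref{ss:kleene}.

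For the ``if'' direction, suppose there is $x\in\Rmax^n$ with $Ax\leq Bx$ and $x_i\neq-\infty$. By~\eqref{ineqs-equiv} this means $x\leq f(x)$. Since $f$ is isotone, iterating gives $x\leq f(x)\leq f^2(x)\leq\cdots$, so $f^k(x)\geq x$ for all $k$, and in particular $f^k(x)_i\geq x_i>-\infty$ for all $k$; because $f$ is additively homogeneous and nonexpansive the orbit cannot drift to $-\infty$ in coordinate $i$ faster than linearly, and dividing by $k$ and passing to the limit~\eqref{chi-limit} yields $\chi_i(f)=\lim_k f^k(x)_i/k\geq 0$. (One should be a little careful that $x$ may have $-\infty$ coordinates so $x\notin\R^n$ and $f$ was defined on $\R^n$; the standard fix is either to replace $x$ by $x\vee(-N,\dots,-N)$ for large $N$, check the inequality is preserved on the relevant coordinates, or to invoke that $f$ extends continuously/monotonically to $\Rmax^n$ — this is where I would be most careful, and it is essentially the only technical point.)

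The main obstacle, in both directions, is the bookkeeping around $-\infty$ coordinates and the passage between the ``pointwise at node $i$'' statements and the global fixed-point/sub-fixed-point statements: Kohlberg's theorem and the cycle-time limit live on $\R^n$, whereas the solution $x$ of $Ax\leq Bx$ we want to produce (and the one we are given) naturally lives on $\Rmax^n$ with possibly infinite entries. I expect the cleanest route is to prove ``only if'' via the optimal positional strategy $\sigma^*$ and an explicit longest-path potential on the reachable subgraph of $\Bipdig^{\sigma^*}$ (making the $-\infty$ pattern of $x$ explicit and under control), and ``if'' via monotone iteration of $f$ plus the homogeneity/nonexpansiveness bound on the $i$-th coordinate of the orbit; the remaining steps are routine given Theorems~\ref{value}, \ref{t:inv-half}, \ref{value-cycletime} and the equivalence~\eqref{ineqs-equiv}.
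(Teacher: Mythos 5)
Your proof is correct, but note that the paper does not prove this theorem in full: it cites \cite{AGG-10} and only sketches the key construction, namely taking an invariant half-line $t\mapsto v+t\chi$ from Kohlberg's theorem and setting $x_j=v_j+t\chi_j$ for $t$ large when $\chi_j\geq 0$ and $x_j=-\infty$ otherwise. That is precisely the route you considered and then abandoned; your worry that truncation might destroy $x\leq f(x)$ is resolvable, because for each $j$ with $\chi_j\geq 0$ and each relevant row $k$ the maximum $\max_l(b_{kl}+v_l+t\chi_l)$ is, for $t$ large, attained on coordinates $l$ with $\chi_l\geq \chi_j\geq 0$ (otherwise the term would eventually drop below $v_j+(t+1)\chi_j$), so the truncated vector gives the same value and the sub-fixed-point inequality survives. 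Your replacement argument --- picking an optimal positional strategy $\sigma^*$ via Theorem~\ref{chi-duality}, observing from~\eqref{chi-expl} that all cycles of $\Bipdig^{\sigma^*}$ accessible from $i$ have nonnegative weight, and building a finite difference-constraint (longest-path) potential on the accessible Min-nodes with $-\infty$ elsewhere, so that $x\leq f^{\sigma^*}(x)\leq f(x)$ --- is a genuinely different and equally valid construction; it trades Kohlberg's theorem for the duality theorem and a Bellman-type feasibility argument, and it has the advantage of making the $-\infty$ support of $x$ and the reduction to shortest paths (as in Subsection~\ref{ss:kleene}) completely explicit. (One small inaccuracy: for the optimizing $\sigma^*$ of Theorem~\ref{chi-duality} one does have $\chi(f^{\sigma^*})=\chi(f)$ coordinatewise, since the maximum there is attained by a single strategy; this only strengthens your argument.) Your ``if'' direction --- monotone iteration of $f$ on $\Rmax^n$, comparison with a finite starting point $y\geq x$, and passage to the limit in~\eqref{chi-limit} --- is the standard argument and handles the $-\infty$ coordinates correctly.
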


This is derived in~\cite{AGG-10} from
Kohlberg's theorem. The vector $x$ is constructed
by taking an invariant half-line, $t\mapsto v+t\chi$, 
setting $x_i=v_i+t\chi_i$ for $t$ large enough if $\chi_i\geq 0$, 
and $x_i=-\infty$ otherwise. 

Theorem~\ref{chi-axbx} shows that to decide whether $Ax\leq Bx$ can
be satisfied by a vector $x$ such that $x_i\neq -\infty$, we can
exploit a {\em mean payoff oracle}, which decides whether
$i$ is a winning node of the associated mean payoff game and gives a
winning strategy for player Max. This oracle can be implemented
either by using the value iteration method, which is pseudo-polynomial~\cite{ZP-96}, by the approach of Puri (solving an associated discounted game for
a discount factor close enough to $1$ by policy iteration~\cite{puri}),
by using the policy iteration algorithm for mean payoff games 
of \cite{CGG-99,gg0,DG-06}, or the one of~\cite{bjorklund}.

In tropical linear algebra, 
there is no obvious subtraction. However,
for any $E\in\RRbar^{n\times n}$ we can define the {\em Kleene star}
\begin{equation}\label{kls-def}
E^*:=\ltr(I-E)^{-1}\rtr=I\vee E\vee E^2\vee\cdots\quad \text{(max-plus)}\; ,
\end{equation}
and analogously with $\wedge$ in the min-plus case. 
In~\eqref{kls-def}, $I$ is the max-plus identity matrix with $0$ entries 
on the main diagonal and $-\infty$ off the diagonal, and 
the powers are understood in the tropical (max-plus) sense. 
Due to the order completeness of $\RRbar$, 
the series in~\eqref{kls-def} is well-defined for all matrices.
Note that in $\RRbar^{n\times n}$, $X=E^*$ is a solution of 
the matrix Bellman equation $X=E X\vee I$. Similarly, 
$x=E^*h$ is a solution of $x=Ex\vee h$. 
Indeed, if $z\geq Ez\vee h$, then we also have
\[
z\geq Ez\vee h \geq E^2 z\vee E h\vee h\geq\cdots
\geq E^{k+1}z\vee E^kh\vee E^{k-1}h\vee\cdots\vee h \enspace ,
\]
so that $z\geq E^*h$ for all such $z$. 
We sum this up in the following standard proposition.
\begin{proposition}[See e.g.~{\cite[Th.~3.17]{BCOQ}}]
\label{bellman}
Let $E\in\RRbar^{n\times n}$ and $h\in\RRbar^n$.
Then, $E^*h$ is the least solution of $z\geq Ez\vee h$.
\end{proposition}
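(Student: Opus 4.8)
The plan is to prove Proposition~\ref{bellman} directly from the definitions, following exactly the computation already sketched in the text just before the statement. The claim has two halves: first, that $z := E^*h$ is itself a solution of $z \geq Ez \vee h$; second, that it is the \emph{least} such solution, i.e. any $z$ satisfying $z \geq Ez \vee h$ also satisfies $z \geq E^*h$.

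For the second half, which is the easier direction and is essentially done in the excerpt, I would argue as follows. Suppose $z \in \RRbar^n$ satisfies $z \geq Ez \vee h$. Since $E$ acts isotonically on $\RRbar^n$ (all entries of the product are suprema of sums, hence monotone in $z$, and the order on $\RRbar$ is complete so everything is well defined), applying $E$ to the inequality $z \geq Ez$ and using $z \geq h$ gives $z \geq Ez \geq E^2 z \vee Eh$, and more generally by induction
\[
z \geq E^{k+1}z \vee E^k h \vee E^{k-1}h \vee \cdots \vee Eh \vee h \geq \big(\mymax_{0\leq \ell \leq k} E^\ell\big) h \enspace .
\]
Letting $k \to \infty$ and using order completeness of $\RRbar$ (so that the supremum over all $k$ exists and equals $E^* h$ by the definition~\eqref{kls-def}), we obtain $z \geq E^* h$. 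This shows $E^*h$ is a lower bound for the solution set.

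For the first half I need $E^*h \geq E(E^*h) \vee h$. The bound $E^*h \geq h$ is immediate from $E^* = I \vee E \vee E^2 \vee \cdots \geq I$, since $I h = h$. For the term $E(E^*h)$, I would use that tropical matrix multiplication is associative and distributes over the (possibly infinite) tropical sum $\vee$ — this follows because, in $\RRbar$, addition distributes over arbitrary suprema (and the relevant conventions for $(-\infty)+(+\infty)$ are fixed and compatible). Hence
\[
E(E^*h) = E\big(I \vee E \vee E^2 \vee \cdots\big)h = \big(E \vee E^2 \vee E^3 \vee \cdots\big)h \leq \big(I \vee E \vee E^2 \vee \cdots\big)h = E^*h \enspace .
\]
Combining, $E(E^*h) \vee h \leq E^*h$, so $E^*h$ solves the inequality. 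Together with the previous paragraph, $E^*h$ is the least solution, proving the proposition. (One may cite \cite[Th.~3.17]{BCOQ} rather than repeat this, but giving the argument is short.)

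The only genuinely delicate point — and the step I'd expect to need the most care — is the infinite distributivity $E(\mymax_\ell E^\ell) = \mymax_\ell E^{\ell+1}$, since the matrices have entries in the extended semiring $\RRbar$ where $(-\infty)+(+\infty)$ must be handled by a fixed convention. Here the max-plus convention $(-\infty)+(+\infty)=-\infty$ is in force, under which addition is lower-semicontinuous and does distribute over arbitrary nonempty suprema on $\RRbar$; so the identity is valid entrywise. Once that is granted, everything else is a routine monotone/order-completeness argument with no hidden subtleties, which is why the authors are content to call it a ``standard proposition.''
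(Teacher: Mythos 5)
Your proof is correct and follows essentially the same route as the paper: the minimality half is exactly the iterated inequality $z\geq E^{k+1}z\vee E^kh\vee\cdots\vee h$ given in the text, and the paper simply asserts the other half (that $E^*h$ solves $z\geq Ez\vee h$), which you justify via distributivity of the tropical product over arbitrary suprema. Your observation about the max-plus convention $(-\infty)+(+\infty)=-\infty$ making that infinite distributivity valid is the right point of care, and it holds as you say.
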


For $E\in\RRbar^{n\times n}$, consider the associated digraph
$\digr(E)$, with set of nodes $[n]$ and an arc connecting node 
$i$ with node $j$ whenever $e_{ij}$ is finite, in which case $e_{ij}$
is the weight of this arc. 
We shall say that {\em node $i$ accesses node $j$} if there exists a
path from $i$ to $j$ in $\digr(E)$. 

The maximal (minimal) cycle mean is another important
object of tropical algebra. For
$E\in \Rmax^{n\times n}$ ($E\in \Rmin^{n\times n}$),
it is defined as
\begin{equation}\label{mcm-def}
\begin{split}
\mu^{\max}(E)&=\max_{k\in [n]}\; \max_{i_1,\ldots ,i_k} \;
\frac{e_{i_1 i_2} + \cdots + e_{i_k i_1}}{k} \quad\text{(max-plus)}\enspace , \\
\mu^{\min}(E)&=\min_{k\in [n]} \; \min_{i_1,\ldots,i_k} \;
\frac{e_{i_1 i_2} + \cdots + e_{i_k i_1}}{k} \quad\text{(min-plus)}\enspace .
\end{split}
\end{equation}
Denote by $\mu_i^{\max}(E)$ ($\mu_i^{\min}(E)$) 
the maximal (minimal) cycle mean 
of the strongly connected component of $\digr(E)$ to which $i$ belongs. 
These numbers are given by the same expressions as in~\eqref{mcm-def}, but
with $i_1,\ldots ,i_k$ restricted to that strongly connected component. 
Using $\mu^{\max}_i(E)$ ($\mu^{\min}_i(E)$), we can write explicit
expressions for the cycle-time vector of a max-plus (min-plus) 
linear function $x\mapsto Ex$:
\begin{equation}\label{chi-expl}
\begin{split}
&\chi_i^{\max}(E)=\max\{\mu_j^{\max}(E)\mid \text{$i$ accesses $j$}\}\quad \text{(max-plus)},\\
&\chi_i^{\min}(E)=\min\{\mu_j^{\min}(E)\mid \text{$i$ accesses $j$}\}\quad \text{(min-plus)}.
\end{split}
\end{equation}
See~\cite{Coc-98} or~\cite{HOW:05} for proofs. Importantly, these
cycle-time vectors of max-plus and min-plus linear functions appear
in~\eqref{e:chi-duality-games}. 
 
Finally, note that~\eqref{mcm-def} and~\eqref{chi-expl} can be deduced 
from~\eqref{val-exist} if $\sigma$ or $\tau$ is fixed. 

\begin{remark}\label{r:shortestpaths}
Observe that any entry $(i,j)$ of $E^k$ in max-plus (resp. min-plus) 
algebra expresses the maximal (resp. minimal) weight of paths with $k$ 
arcs connecting node $i$ with node $j$ in $\digr(E)$. 
It follows then from~\eqref{kls-def} that any entry $(i,j)$ of $E^*$,
for $i\neq j$, expresses the maximal (or minimal) weight of paths 
connecting node $i$ with node $j$ 
without restrictions on the number of arcs.
Further we can add to $\digr(E)$ a new node and, whenever 
$b_i$ if finite, an arc of weight $b_i$  
connecting node $i$ of $\digr(E)$ with this new node. Then,  
$(E^*b)_i$ provides the maximal (or minimal) weight of 
paths connecting node $i$ with the new node. Therefore, 
computing $E^*b$ is equivalent to solving a single destination
shortest path problem, which can be done in $O(n^3)$ time
(for instance by the Bellman-Ford algorithm). 
\end{remark}

\begin{remark}\label{r:chi-karp}
We note that $\chi^{\max}(E)$ and $\chi^{\min}(E)$ can also be computed
in $O(n^3)$ time. To do this, decompose first the digraph
$\digr(E)$ in strongly connected components, 
and apply Karp's algorithm to compute the 
maximal or minimal cycle mean of each component.
\end{remark}

\section{Tropical linear-fractional programming}

This is the main section of the paper. 
Here we solve the tropical linear-fractional programming problem~\eqref{pblp}, 
i.e.\ the problem
\begin{equation}
\label{mainproblem}
\begin{split}
&\text{minimize } \quad (px\vee r) - (qx\vee s) \\
&\text{subject to:}\quad Ax\vee c\leq Bx\vee d \; ,\;
x\in \Rmax^n 
\end{split}
\end{equation}
where $p,q\in\Rmax^n$, $c,d\in\Rmax^m$, $r,s\in\Rmax$ and  $A,B\in\Rmax^{m\times n}$. 

In Subsection~\ref{ss:Pformulations}, we apply 
Theorem~\ref{chi-axbx}
to reduce~\eqref{mainproblem} to the problem of finding
the smallest zero of a function giving the value
of a parametric game (the {\em spectral function}). 

In Subsection~\ref{ss:specf}, 
we show that the spectral function is $1$-Lipschitz and piecewise linear,
and that it can be written as a finite supremum or
infimum of {\em partial} spectral functions, 
corresponding to one player games. 
We also prove a number of technical statements about the
piecewise-linear structure of the spectral functions, which will be used in the complexity
analysis. 

In Subsection~\ref{ss:certificates}, we provide certificates of optimality
and unboundedness 
(these certificates are given by strategies for the players). 
This generalizes the result of~\cite{AGK-10}, concerning
the tropical analogue of Farkas lemma. We recover
as a special case the unboundedness certificates of~\cite{BA-08}. 

The rest of the section is devoted to finding the 
least zero of the spectral function. With this aim,  
we introduce a bisection method, as well as a Newton-type method, 
in which partial spectral functions play the role of derivatives, 
see Subsection~\ref{ss:bisnewt}. 
 
Each Newton iteration consists of
\begin{enumerate}[(i)]
\item Computing a derivative,
i.e.\ choosing a strategy for player Max (or dually Min) which satisfies
a local optimality condition;  
\item Finding the smallest zero of the tangent map, which represents
the parametric spectral function of a one-player game in which
the strategy for player Max (or dually Min) is already fixed. 
\end{enumerate} 
The iteration in the space of strategies for player Max has an advantage:
the second subproblem can be reduced to
a shortest-path problem (Subsection~\ref{ss:kleene}).
The first subproblem is discussed in Subsection~\ref{ss:left-optim}, 
where the overall worst-case complexity of Newton method is given. 
Subsection~\ref{ss:germs}, which can be skipped by the reader, 
gives an alternative approach to the first subproblem in which 
the computation of (left) optimal strategies is  
rather algebraic and not relying on the integrality.

\subsection{The spectral function method}\label{ss:Pformulations}

In this subsection we recast~\eqref{mainproblem} 
as a parametric two-sided tropical system and a mean payoff game, 
introducing the key concept of spectral function. 
However, before doing this we need to mention 
special cases in which there exists a feasible
$x$ (i.e., satisfying $Ax\vee c\leq Bx\vee d$) such that 
$px\vee r= -\infty$ or $qx\vee s= -\infty$. 
For these cases we assume the following rules:
\begin{equation}
\label{infty-rules}
\begin{array}{c|c|c}
px\vee r & qx\vee s & (px\vee r) - (qx\vee s) \\
\hline 
-\infty & \text{finite} & -\infty \\
\text{finite} & -\infty & +\infty \\
-\infty & -\infty & -\infty
\end{array}
\end{equation}
which are formally consistent with the rules of $\RRmax$. 
Then, it is easy to check that 
\begin{equation}
\label{resid-obs}
(px\vee r) - (qx\vee s) = \min\{\lambda\in \Rmax \mid px\vee r\leq \lambda + (qx\vee s)\} \; .
\end{equation}

Introducing the notation
\begin{equation}
\label{UVnotat}
U=
\begin{pmatrix}
A & c\\
p & r
\end{pmatrix}\; \makebox{ and } \;
V(\lambda )=
\begin{pmatrix}
B & d\\
\lambda +q & \lambda+s
\end{pmatrix},
\end{equation}
we reformulate the tropical linear-fractional programming problem
in terms of a spectral function, which
gives the value of a parametric
mean payoff game: the payments are given by the
matrices $U$ and $V(\lambda)$,
and the initial node is $n+1$.

\begin{propdef}
\label{pd-specf}
With assumption~\eqref{infty-rules}, 
the tropical linear-fractional programming problem~\eqref{mainproblem}
is equivalent to
\begin{equation}\label{problem-simple}
\min\{\lambda \in\Rmax\mid \spectral(\lambda)\geq 0\}
\end{equation} 
where the {\em spectral function} $\spectral$ is given by
\[ 
\spectral(\lambda):=\chi_{n+1}(U^{\sharp}V(\lambda)) \enspace .
\]
\end{propdef}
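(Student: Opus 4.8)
The claim has two independent halves: first, the equivalence between the original fractional problem \eqref{mainproblem} and the optimization problem \eqref{problem-simple}; second, the identification of the value of the parametric mean payoff game with payments $U,V(\lambda)$ (started at node $n+1$) with the quantity $\chi_{n+1}(U^{\sharp}V(\lambda))$. The second half is essentially immediate: by construction \eqref{minmax-dynamic}, the dynamic programming operator of the mean payoff game with payment matrices $U$ and $V(\lambda)$ is exactly $x\mapsto U^{\sharp}V(\lambda)x$, and Theorem~\ref{value-cycletime} identifies the $j$th coordinate of the cycle-time vector $\chi$ of that operator with the value of the game starting at node $j$ of Min. So the content is really in the first half, plus checking that the Assumptions~1 and~2 needed to invoke the machinery are in force (or can be arranged).

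\emph{First step: rewrite the objective via residuation.} I would start from \eqref{resid-obs}, which expresses $\objective(x)=(px\vee r)-(qx\vee s)$ as $\min\{\lambda\in\Rmax\mid px\vee r\leq \lambda+(qx\vee s)\}$; this identity, valid under the conventions \eqref{infty-rules}, is the tropical analogue of the classical trick of clearing denominators in linear-fractional programming. Consequently, the optimal value of \eqref{mainproblem} equals
\[
\inf\{\lambda\in\Rmax\mid \exists x\in\Rmax^n,\ Ax\vee c\leq Bx\vee d\ \text{and}\ px\vee r\leq \lambda+(qx\vee s)\}\enspace .
\]
Now observe that the conjunction of the two inequality systems inside the braces is precisely $Ux\vee u\leq V(\lambda)x\vee v$ where, with the block notation \eqref{UVnotat}, one appends a single extra coordinate (indexed $n+1$) to $x$ fixed to $0$ (the tropical unit), so that $Ux\leq V(\lambda)x$ in dimension $n+1$ is the same as the pair of constraints on the original $x\in\Rmax^n$. (One must be slightly careful that the $(n+1)$st coordinate of the extended vector is exactly $0$, not merely finite; additive homogeneity of the game operator lets one normalise to this.)

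\emph{Second step: feasibility of a two-sided system as a winning-node condition.} The condition ``$\exists$ extended vector $z\in\Rmax^{n+1}$ with $Uz\vee\ldots\leq V(\lambda)z\vee\ldots$ and $z_{n+1}\neq-\infty$'' is, by Theorem~\ref{chi-axbx} applied to the pair $(U,V(\lambda))$, equivalent to $\chi_{n+1}(U^{\sharp}V(\lambda))\geq 0$, i.e.\ to $\spectral(\lambda)\geq 0$. Combining this with the first step gives that the optimal value of \eqref{mainproblem} is $\inf\{\lambda\in\Rmax\mid\spectral(\lambda)\geq 0\}$, and since $\spectral$ is piecewise linear (as the cycle-time coordinate of a min-max function, cf.\ the discussion around \eqref{chi-limit}) the infimum is attained, so it is a minimum; this is exactly \eqref{problem-simple}. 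To make this argument airtight I would check that $U$ and $V(\lambda)$ satisfy Assumptions~1 and~2 — or, when they do not (e.g.\ a row of $U$ or a column identically $-\infty$), handle those degenerate rows/columns by the standard device of removing vacuous or infeasible constraints, which does not change the feasible set; this bookkeeping, together with the correct translation of the constraints $Ax\vee c\le Bx\vee d$ (which involve the constant vectors $c,d$ on the two sides, hence the extra column in $U,V(\lambda)$) into the homogeneous form $Uz\le V(\lambda)z$, is the main technical obstacle, and it is where the conventions \eqref{infty-rules} have to be invoked carefully to ensure that $\spectral(\lambda)$ is well defined even when $qx\vee s=-\infty$ for all feasible $x$.
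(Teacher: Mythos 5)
Your proposal is correct and follows essentially the same route as the paper's proof: rewrite the objective via the residuation identity~\eqref{resid-obs}, homogenize by appending the coordinate $y_{n+1}$ (with the constraint $y_{n+1}\neq-\infty$, normalizable to $0$ by additive homogeneity), and invoke Theorem~\ref{chi-axbx} to translate solvability of $Uy\leq V(\lambda)y$ with finite $y_{n+1}$ into $\chi_{n+1}(U^{\sharp}V(\lambda))\geq 0$. Your extra remarks on verifying Assumptions~1 and~2 and on attainment of the infimum are sensible refinements of points the paper leaves implicit, not a different argument.
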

\begin{proof}
We first show that~\eqref{mainproblem} is equivalent to the following problem:
\begin{equation}\label{problem-sets}
\begin{split}
&\text{minimize } \quad \lambda \\
&\text{subject to:}\quad  px\vee r\leq \lambda+(qx\vee s)\; ,\;
Ax\vee c\leq Bx\vee d \; , \; x\in\Rmax^n \; ,\; \lambda \in\Rmax 
\end{split}
\end{equation}
Indeed, denoting $P=\left\{x\in \Rmax^n \mid Ax\vee c\leq Bx\vee d\right\}$, 
we verify that
\begin{equation*}
\begin{split}
\min\limits_{x\in P} \{ (px\vee r) - (qx\vee s)\}= 
&\min\limits_{x\in P}\min\limits_{\lambda}
\{\lambda\mid px\vee r\leq \lambda + (qx\vee s)\}\\
= &\min\limits_{\lambda}\{\exists x\in P\mid px\vee r\leq \lambda +
(qx\vee s)\} \; .
\end{split}
\end{equation*} 

Every problem concerning affine polyhedra 
has an equivalent ``homogeneous'' version concerning cones, 
which is obtained by adding to the system of inequalities 
defining an affine polyhedron a new variable whose coefficients are the free 
terms of this system. Then, the original polyhedron is recovered by 
setting this new variable to $0$. 
The homogeneous equivalent version of~\eqref{problem-sets} reads:
\begin{equation}\label{problem-cones}
\begin{split}
&\text{minimize } \quad \lambda  \\
&\text{subject to:}\quad   u y\leq \lambda + v y\; ,
\; C y\leq D y\; , \; y_{n+1}\neq -\infty \; , \; y\in\Rmax^{n+1} \; , 
\; \lambda \in\Rmax 
\end{split} 
\end{equation}
where we set $u=[p, r]$, $v=[q, s]$, $C=[A,c]$ 
and $D=[B, d]$. 

We can still reformulate~\eqref{problem-cones} in a more compact way:
\[ 
\min \{ \lambda \in\Rmax\mid U y\leq V(\lambda )y\; ,
\; y_{n+1}\neq -\infty\; \text{ is solvable}\} \; ,
\] 
with $U$ and $V(\lambda)$ defined in~\eqref{UVnotat}. 
Finally, by Theorem~\ref{chi-axbx}, it follows that 
$Uy\leq V(\lambda )y$ is solvable with finite $y_{n+1}$ if, and only if,
$\chi_{n+1}(U^{\sharp}V(\lambda ))\geq 0$.
\end{proof}

\begin{remark}
Butkovi\v{c} and Aminu~\cite{BA-08} considered
the following special cases of~\eqref{mainproblem}: 
\begin{equation}\label{problem-straight}
\begin{split}
&\text{minimize } \quad px \quad (\text{resp.\ maximize } \quad qx)\\
&\text{subject to:}\quad  Ax\vee c\leq Bx\vee d \; , \; x\in\R^n 
\end{split}
\end{equation}
where $p,q\in\R^n$, $c,d\in\R^m$, $r,s\in\R$ and  $A,B\in\R^{m\times n}$
have only finite entries. Clearly, \eqref{mainproblem} becomes~\eqref{problem-straight} if we
set  $r=-\infty$, $q\equiv -\infty$ and $s=0$ for minimization, 
or respectively $s=-\infty$, $p\equiv -\infty$ and $r=0$ for maximization, 
where the opposite (tropical inverse) of the minimal value of $\lambda$ 
equals the maximum of $qx$. 

In this connection, formulation~\eqref{problem-cones}
(or equivalently~\eqref{mainproblem})
has a good geometric insight, meaning optimization for general
tropical half-spaces (or hyperplanes) defined by bivectors $(u,\lambda +v)$, 
see~\cite{GK-09,AGK-10} for more background. 
\end{remark}

\begin{example}\label{Example1} 
Assume we want to maximize $(1+x_1)\vee (3+x_2)$ 
over the tropical polyhedron of
$\Rmax^2$ defined by the system $Ax\vee c\leq Bx\vee d$, where
\[
A=
\left(\begin{array}{cc}
-\infty & -1 \\
-2 & -2 \\
-1 & -\infty \\
0 & -\infty 
\end{array}\right) \; ,\quad 
c=
\left(\begin{array}{c}
-\infty\\
-\infty\\
-\infty\\
-\infty
\end{array}\right) \; ,\quad
B=
\left(\begin{array}{cc}
0 & -\infty \\
-\infty & -\infty \\
-\infty & 0 \\
-\infty & 2 
\end{array}\right) \; ,\quad
d=
\left(\begin{array}{c}
0\\
0\\
0\\
0
\end{array}\right) \; .
\]
This tropical polyhedron is displayed on the 
left-hand side of Figure~\ref{fig-max} below.  
This maximization problem is equivalent to minimizing
$\lambda$ subject to $0\leq \lambda + ((1+x_1)\vee (3+x_2))$,  
$Ax\vee c\leq Bx\vee d$. Indeed, 
the value of the latter problem is the opposite 
(tropical inverse) of the value of the maximization problem. 
The homogeneous version of this minimization problem reads:
\begin{equation}\label{EqProbMaxExample}
\begin{split}
&\text{minimize } \quad \lambda  \\
&\text{subject to:}\quad   u y\leq \lambda + v y\; ,
\; C y\leq D y\; , \; y_{3}\neq -\infty \; , \; y\in\Rmax^{3} \; , 
\; \lambda \in\Rmax 
\end{split} 
\end{equation}
where $C=[A,c]$, $D=[B, d]$, $u=(-\infty,-\infty,0)$, 
and $v=(1,3,-\infty)$.
\end{example}

\subsection{Partial spectral functions and piecewise linearity}\label{ss:specf}

We have shown that solving the tropical linear-fractional programming problem~\eqref{mainproblem}
is equivalent to finding the least zero of the spectral function
$\spectral(\lambda):=\chi_{n+1}(U^{\sharp}V(\lambda))$. Here we will analyze the
graph of this spectral function, after introducing analogues of derivatives, 
the partial spectral functions.

Given a strategy $\sigma\in S$ for player Max and a strategy
$\tau\in T$ for player Min, we respectively define the min-plus linear function
$U^{\sharp}V^{\sigma}(\lambda)$ and the max-plus linear function
$U_{\tau}^{\sharp}V(\lambda)$, see~\eqref{mm-only} and~\eqref{ataubsigma}. 
We introduce the {\em partial spectral functions} 
$\spectral^{\sigma}(\lambda):=\chi_{n+1}(U^{\sharp}V^{\sigma}(\lambda))$ and
$\spectral_{\tau}(\lambda):=\chi_{n+1}(U^{\sharp}_{\tau}V(\lambda))$. 
With this notation,  \eqref{e:chi-duality} yields
\begin{equation}\label{sl-duality}
\spectral(\lambda)=\max\limits_{\sigma\in S} \spectral^{\sigma}(\lambda)=
\min\limits_{\tau\in T} \spectral_{\tau}(\lambda) \; .
\end{equation}
Partial spectral functions can be represented as in~\eqref{val-exist}, 
where one of the strategies is fixed, see also~\eqref{chi-expl}:
\begin{equation}\label{phi-Phi}
\phi^{\sigma}(\lambda)=\min_{\tau\in T}\Phi_{U,V(\lambda)}(n+1,\tau,\sigma)\; ,
\quad 
\phi_{\tau}(\lambda)=\max_{\sigma\in S}\Phi_{U,V(\lambda)}(n+1,\tau,\sigma)\; .
\end{equation}

A graphical presentation of~\eqref{phi-Phi} and~\eqref{sl-duality}  using only $\spectral_{\tau}$
is given in Figure~\ref{FigureSpecFunction}.

\begin{figure}
\begin{center}
\input{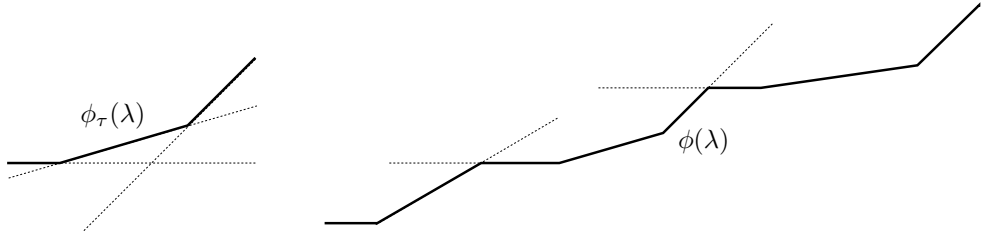}
\end{center}
\caption{A partial spectral function (left) and a spectral function (right).}
\label{FigureSpecFunction}
\end{figure}

Let $\Bipdig_\lambda $ be the bipartite digraph of the mean payoff game
whose payments are given by the matrices $U$ and $V(\lambda )$, 
see Figures~\ref{BipDigrapfExMin} and~\ref{FigureBipDigraGen} 
below for illustrations. 
Observe that in this digraph, only the weight of the arcs
connecting node $m+1$ of Max with nodes $l\in [n+1]$ of Min
depend on $\lambda $.
Recall that given a strategy $\sigma$ for player Max 
(resp.\ $\tau $ for player Min),
$\Bipdig^{\sigma}_\lambda $ (resp.\ $\Bipdig^{\tau }_\lambda $) denotes 
the bipartite subdigraph of $\Bipdig_\lambda $ obtained by deleting from
$\Bipdig_\lambda $ all the arcs $(i,j)$ such that
$i\in[m+1]$ and $j\neq\sigma(i)$
(resp.\ the arcs $(j,i)$ such that $j\in [n+1]$ and $i\neq\tau (j)$). 

We next investigate the properties of spectral functions.

\begin{theorem}\label{t:philambda}
Let $\sigma\colon [m+1]\mapsto [n+1]$ be a positional strategy for player Max 
and $\tau\colon [n+1]\mapsto [m+1]$ be a positional strategy for player Min. 
Then, 
\begin{enumerate}[(i)]
\item\label{t:philambdaP1} 
$\spectral(\lambda)$, $\spectral_{\tau}(\lambda)$ and 
$\spectral^{\sigma}(\lambda)$ are $1$-Lipschitz nondecreasing 
piecewise-linear functions, whose linear pieces are of the form 
$(\alpha+\beta\lambda)/k$, where $k\in [\min(m,n)+1]$ and
$\beta\in\{0,1\}$.
\item\label{t:philambdaP2} 
If the absolute values of all the finite coefficients in~\eqref{problem-cones} 
are bounded by $M$, then $|\alpha/k|\leq 2M$.
\item\label{t:philambdaP3} 
$\spectral_{\tau}(\lambda)$ is convex and $\spectral^{\sigma}(\lambda)$ is 
concave. Both functions consist of no more than $\min(m,n)+2$ linear pieces.
\end{enumerate}
\end{theorem}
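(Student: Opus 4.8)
The plan is to work from the combinatorial formulas \eqref{phi-Phi} for the partial spectral functions and then derive the statement for $\spectral$ from the duality \eqref{sl-duality}. First I would fix a strategy $\tau$ for Min and analyze $\spectral_\tau(\lambda)=\max_{\sigma\in S}\Phi_{U,V(\lambda)}(n+1,\tau,\sigma)$. In the digraph $\Bipdig^\tau_\lambda$ only Max still has choices, so $\Phi_{U,V(\lambda)}(n+1,\tau,\sigma)$ is, by \eqref{PhiAB}, the mean weight of the unique cycle reached from node $n+1$ in the sunflower digraph $\Bipdig^{\sigma,\tau}_\lambda$; maximizing over $\sigma$ gives $\spectral_\tau(\lambda)=\chi^{\max}_{n+1}(U^\sharp_\tau V(\lambda))$, which by \eqref{chi-expl} is a max over accessible strongly connected components of maximal cycle means. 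A cycle of the game visits $k\in[\min(m,n)+1]$ Max-nodes and $k$ Min-nodes (bounded by $n+1$ on the Min side and $m+1$ on the Max side, hence by $\min(m,n)+1$), and among its arcs only those leaving node $m+1$ of Max carry $\lambda$, and such a cycle passes through $m+1$ at most once. Hence the mean weight of each such cycle is either a constant $\alpha/k$ (cycle avoiding $m+1$) or $(\alpha+\lambda)/k$ (cycle through $m+1$ exactly once); this gives the claimed linear-piece form with $\beta\in\{0,1\}$. Since $\spectral_\tau$ is a finite max of such affine functions it is convex, $1$-Lipschitz (each slope is $0$ or $1/k\le 1$), and nondecreasing (slopes $\ge 0$); symmetrically, using $f^\sigma_j$ from \eqref{mm-only} and the min-plus formula in \eqref{chi-expl}, $\spectral^\sigma(\lambda)=\chi^{\min}_{n+1}(U^\sharp V^\sigma(\lambda))$ is a finite min of the same kind of affine functions, hence concave, $1$-Lipschitz and nondecreasing.

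For part \eqref{t:philambdaP1} applied to $\spectral$ itself, I would invoke \eqref{sl-duality}: $\spectral=\max_{\sigma}\spectral^\sigma=\min_\tau\spectral_\tau$. A finite maximum of $1$-Lipschitz nondecreasing piecewise-linear functions is again $1$-Lipschitz, nondecreasing and piecewise-linear, and its linear pieces are among the linear pieces of the $\spectral^\sigma$, so they have the form $(\alpha+\beta\lambda)/k$ with $k\in[\min(m,n)+1]$, $\beta\in\{0,1\}$. (Convexity/concavity is of course lost for $\spectral$, which is why part \eqref{t:philambdaP3} is stated only for the partial functions.)

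For part \eqref{t:philambdaP2}, on a linear piece $(\alpha+\beta\lambda)/k$ coming from a cycle of length $k$ (on each side), $\alpha$ is the sum of $2k$ arc weights: $k$ of the form $-a_{i_t j_{t-1}}$ and $k$ of the form $b_{i_t j_t}$ (with possibly one of the $b$-arcs being the $\lambda$-bearing arc leaving node $m+1$, whose non-$\lambda$ part is still one of the finite coefficients bounded by $M$ — namely an entry of $q$ or $s$ or $0$). All these $2k$ coefficients are finite entries of the data in \eqref{problem-cones} (otherwise the arc is absent), hence bounded in absolute value by $M$, so $|\alpha|\le 2kM$ and $|\alpha/k|\le 2M$.

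Finally, part \eqref{t:philambdaP3}: convexity of $\spectral_\tau$ and concavity of $\spectral^\sigma$ were already established above. For the bound on the number of linear pieces, I would argue that a convex $1$-Lipschitz nondecreasing piecewise-linear function whose slopes all lie in $\{0\}\cup\{1/k : 1\le k\le \min(m,n)+1\}$ has nondecreasing slopes taking at most $\min(m,n)+2$ distinct values, hence at most $\min(m,n)+2$ linear pieces; the same applies to $\spectral^\sigma$ with nonincreasing slopes. The main obstacle is the careful bookkeeping in the first step: verifying that a cycle in the game digraph genuinely passes through the $\lambda$-node $m+1$ at most once (which follows from $\Bipdig^{\sigma,\tau}_\lambda$ being a sunflower, so cycles are simple in the relevant sense), and pinning down that the length parameter $k$ is bounded by $\min(m,n)+1$ rather than $\max(m,n)+1$ — this uses that a cycle alternates Max- and Min-nodes and is simple, so it cannot have more than $n+1$ distinct Min-nodes nor more than $m+1$ distinct Max-nodes. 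Everything after that is routine elementary analysis of piecewise-linear convex functions.
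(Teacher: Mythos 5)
Your proposal is correct and follows essentially the same route as the paper: decompose the spectral functions via~\eqref{sl-duality} and~\eqref{phi-Phi} into finitely many functions $\lambda\mapsto\Phi_{U,V(\lambda)}(n+1,\tau,\sigma)$, observe that each is the mean weight per turn $(\alpha+\beta\lambda)/k$ of an elementary cycle with $k\leq\min(m,n)+1$ passing through node $m+1$ at most once, bound $|\alpha/k|$ by $2M$ from the per-turn payments, and count linear pieces of the convex/concave partial functions by noting each admissible slope $0,1,1/2,\ldots,1/(\min(m,n)+1)$ can occur only once.
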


\begin{proof}
As follows from~\eqref{sl-duality} and~\eqref{phi-Phi}, 
spectral functions are built from the finite number of 
functions $\lambda \mapsto \Phi_{U,V(\lambda)}(n+1,\sigma,\tau)$, 
each of which is given by the mean weight per turn of the only elementary 
cycle in $\Bipdig^{\sigma,\tau}_\lambda$ accessible from node $n+1$ of Min.
Recall that $\Bipdig^{\sigma,\tau}_\lambda$ is the subdigraph of 
$\Bipdig_\lambda$ where all arcs at all nodes except for 
those chosen by player Max (strategy $\sigma$) and player Min 
(strategy $\tau$) are removed. 
As a function of $\lambda$, this mean weight per turn is a line 
$(\alpha+\beta\lambda)/k$. Here $k\in [\min(m,n)+1]$ since the 
length (i.e., the number of nodes of Max it contains) 
of any elementary cycle in the bipartite digraph $\Bipdig_\lambda$ 
does not exceed both $m+1$ and $n+1$. Also $\beta\in\{0,1\}$, 
because an elementary cycle can contain node $m+1$ of Max no more than once.
If the absolute values of all the payments in the game are bounded by $M$, then
$|\alpha/k|\leq 2M$ since the arithmetic mean of payments, counted per turn, 
does not exceed the greatest sum of two consecutive payments.

Thus, the functions $\lambda \mapsto \Phi_{U,V(\lambda)}(n+1,\sigma,\tau)$ 
satisfy all the properties of~\eqref{t:philambdaP1} and~\eqref{t:philambdaP2}. 
Using~\eqref{sl-duality} and~\eqref{phi-Phi} we conclude that 
$\spectral(\lambda)$, $\spectral_{\tau}(\lambda)$ and 
$\spectral^{\sigma}(\lambda)$ also satisfy these properties. 

Convexity (resp.\ concavity) of $\spectral_{\tau}$ 
(resp.\ $\spectral^{\sigma}$) follows from~\eqref{phi-Phi}.
In a convex or concave piecewise-linear function, 
each slope can appear only once, 
while the possible slopes are $0$, $1$, $1/2$,$\ldots$, $1/(\min(m,n)+1)$. 
This shows~\eqref{t:philambdaP3}. 
\end{proof}
     
Some useful facts can be deduced further 
from this description of spectral functions.

\begin{corollary}\label{c:philambda}  
Spectral functions satisfy the following properties: 
\begin{enumerate}[(i)]
\item\label{c:philambdaP1} 
If the absolute values of all coefficients in~\eqref{problem-cones} 
are either infinite or bounded by $M$, then 
$\spectral(\lambda),\spectral^{\sigma}(\lambda)$ 
and $\spectral_{\tau}(\lambda)$ are linear for $\lambda\leq -4M(\min(m,n)+1)^2$ 
and for $\lambda\geq 4M(\min(m,n)+1)^2$.

\item\label{c:philambdaP2} 
If the absolute values of all coefficients in~\eqref{problem-cones} 
are either infinite or bounded by $M$, then
the solutions to the problems 
$\min\{\lambda \mid \spectral(\lambda)\geq 0\}$, 
$\min\{\lambda \mid \spectral^{\sigma}(\lambda)\geq 0\}$
and $\min\{\lambda \mid \spectral_{\tau}(\lambda)\geq 0\}$
lie (if finite) in $[-2M(\min(m,n)+1),2M(\min(m,n)+1)]$. 
Moreover, if all the finite coefficients are integers, 
then the solutions to all these problems are integers as well.
 
\item\label{c:philambdaP3} 
If the finite coefficients in~\eqref{problem-cones} are integers, 
then the breaking points of $\spectral(\lambda)$, 
$\spectral^{\sigma}(\lambda)$ or $\spectral_{\tau}(\lambda)$ 
are rational numbers whose denominators do not exceed $\min(m,n)+1$.

\item\label{c:philambdaP4} 
If the finite coefficients in~\eqref{problem-cones} are integers 
with absolute values bounded by $M$, 
then $\spectral(\lambda)$ consists of no more than 
$8M(\min(m,n)+1)^4+2$ linear pieces.

\end{enumerate}
\end{corollary}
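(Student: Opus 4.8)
The plan is to read all four items off the piecewise-linear structure established in Theorem~\ref{t:philambda}. Write $K:=\min(m,n)+1$. The facts I will use are: every linear piece of $\spectral$, $\spectral^{\sigma}$ or $\spectral_{\tau}$ has the form $(\alpha+\beta\lambda)/k$ with $1\le k\le K$ and $\beta\in\{0,1\}$; when the finite coefficients of~\eqref{problem-cones} are bounded by $M$ one also has $|\alpha/k|\le 2M$; and when those coefficients are integers, $\alpha$ is an integer, since $\beta\lambda+\alpha$ is the total weight of an elementary cycle of $\Bipdig^{\sigma,\tau}_\lambda$, all of whose arc weights are integers except for the at most one arc leaving node $m+1$ of Max, which contributes $\lambda$ plus an integer.

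For \eqref{c:philambdaP1}, I would locate the breaking points. A breaking point $\lambda$ is a value where two adjacent pieces of different slope coincide, so $(\alpha_1+\beta_1\lambda)/k_1=(\alpha_2+\beta_2\lambda)/k_2$ with $\beta_1/k_1\neq\beta_2/k_2$, giving $\lambda=(\alpha_2/k_2-\alpha_1/k_1)/(\beta_1/k_1-\beta_2/k_2)$. The numerator has absolute value at most $4M$ by the bound $|\alpha/k|\le 2M$ of Theorem~\ref{t:philambda}, while the denominator is a nonzero difference of two elements of $\{0,1,\tfrac12,\dots,\tfrac1K\}$, hence of absolute value at least $1/K^2$. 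So every breaking point lies in $[-4MK^2,4MK^2]$, and each of the three functions is affine outside this interval.

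For \eqref{c:philambdaP2}, let $\lambda^*$ be a finite least zero of $\spectral$ (the argument is identical for $\spectral^{\sigma}$ and $\spectral_{\tau}$). Since $\spectral$ is nondecreasing and continuous, $\spectral(\lambda^*)=0$ and $\spectral<0$ on $[\lambda^*-\varepsilon,\lambda^*)$ for small $\varepsilon>0$; hence the piece of $\spectral$ on $[\lambda^*-\varepsilon,\lambda^*]$ is strictly increasing, i.e.\ has $\beta=1$. Writing it as $(\alpha+\lambda)/k$ and using $\spectral(\lambda^*)=0$ gives $\lambda^*=-\alpha$, so $|\lambda^*|=k\,|\alpha/k|\le 2MK$, and $\lambda^*\in\mathbb{Z}$ when the finite coefficients are integers. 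For \eqref{c:philambdaP3}, solving $k_2(\alpha_1+\beta_1\lambda)=k_1(\alpha_2+\beta_2\lambda)$ at a breaking point yields $\lambda=(k_1\alpha_2-k_2\alpha_1)/(k_2\beta_1-k_1\beta_2)$; with integer coefficients the numerator is an integer and the denominator a nonzero integer of absolute value at most $K$, so $\lambda$ is rational with denominator at most $K$ in lowest terms.

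Finally, \eqref{c:philambdaP4} follows by combining \eqref{c:philambdaP1} and \eqref{c:philambdaP3}: all breaking points of $\spectral$ lie in an interval of length $8MK^2$ and are rationals with denominator at most $K$ in lowest terms, so any two distinct ones $a/b\neq c/d$ (with $b,d\le K$) satisfy $|a/b-c/d|=|ad-bc|/(bd)\ge 1/K^2$; hence there are at most $8MK^4+1$ breaking points and at most $8MK^4+2$ linear pieces. I expect the only genuinely delicate point to be \eqref{c:philambdaP2}: the claim that the least zero always sits on a piece of positive slope (forcing $\beta=1$) is what makes both the bound $2MK$ and the integrality go through, and it rests on monotonicity together with continuity rather than on the algebraic data; everything else is bookkeeping with Theorem~\ref{t:philambda} plus the elementary spacing estimate for rationals of bounded denominator.
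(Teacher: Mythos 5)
Your proof is correct and follows essentially the same route as the paper's: each item is read off the form $(\alpha+\beta\lambda)/k$ of the linear pieces from Theorem~\ref{t:philambda}, with the same bounds on breaking points, the same formula $\mu=(k_1\alpha_2-k_2\alpha_1)/(k_2\beta_1-k_1\beta_2)$, and the same spacing estimate for rationals of bounded denominator. Your explicit justification that the piece at a finite least zero must have slope $\beta=1$ (via monotonicity and continuity) is a point the paper leaves implicit, but it is the same argument in substance.
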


\begin{proof}

\eqref{c:philambdaP1} Consider the intersection point $\mu$ of one linear piece
$(\alpha_1+\beta_1\lambda)/k_1$ with another linear piece 
$(\alpha_2+\beta_2\lambda)/k_2$. By Theorem~\ref{t:philambda}, 
$k_1,k_2\leq\min(m,n)+1$ and 
$| \alpha_1/k_1 | , | \alpha_2/k_2 | \leq 2M$,
and we obtain from
\[
| \beta_1/k_1-\beta_2/k_2 | \geq \frac{1}{(\min(m,n)+1)^2} \; ,
\quad | \alpha_1/k_1-\alpha_2/ k_2| \leq 4M \; ,
\]
that $|\mu |\leq 4M(\min(m,n)+1)^2$. 
This means that $\spectral(\lambda)$ is linear for
$\lambda\geq 4M(\min(m,n)+1)^2$ and $\lambda\leq -4M(\min(m,n)+1)^2$. 
(Note that this part did not impose the integrality of coefficients.)

\eqref{c:philambdaP2} Note that due to piecewise-linearity, 
the solution to each of these problems (if finite) is given by the intersection 
point of a certain linear piece of the form $(\alpha+\lambda)/k$ with zero. 
Then, since $|\alpha/k|\leq 2M$ and $k\leq\min(m,n)+1$ 
by Theorem~\ref{t:philambda}, we conclude that this intersection point 
$-\alpha$ lies in $[-2M(\min(m,n)+1),2M(\min(m,n)+1)]$. Moreover, 
if the finite coefficients in~\eqref{problem-cones} are integers, 
then this solution $-\alpha$ is also integer.

\eqref{c:philambdaP3} By Theorem~\ref{t:philambda}, 
spectral functions are piecewise linear and 
the linear pieces are of the form $(\alpha+\beta\lambda)/k$, 
where in particular $k\in [\min(m,n)+1]$ and $\beta\in\{0,1\}$. 
Considering the intersection point $\mu $ of one such piece
$(\alpha_1+\beta_1\lambda)/k_1$ with another piece 
$(\alpha_2+\beta_2\lambda)/k_2$ and assuming the integrity of 
$\alpha_1,\alpha_2$ we obtain that 
$\mu =(k_1 \alpha_2-k_2\alpha_1)/(k_2 \beta_1-k_1\beta_2)$ 
is a rational number with denominator not exceeding $\min(m,n)+1$.

\eqref{c:philambdaP4} 
The denominators of breaking points do not exceed $\min(m,n)+1$, 
and hence the difference between their inverses is not less than 
$1/(\min(m,n)+1)^2$. This is a lower bound for the difference between two
consecutive breaking points. 
We get the claim applying part~\eqref{c:philambdaP1} .
\end{proof}

Note that to determine the slope of $\phi(\lambda)$ at $+\infty$, 
meaning for $\lambda\geq 4M(\min(m,n)+1)^2$, or at $-\infty$, 
meaning for $\lambda\leq -4M(\min(m,n)+1)^2$, 
we can set all the finite coefficients in~\eqref{problem-cones} to $0$. 
Then, we ``play'' the mean payoff game at 
$\lambda=1$ or at $\lambda=-1$, respectively. 

Denote by $\MPGinteger(m,n,M)$ the worst-case complexity of an oracle 
computing the {\em value} of mean payoff games with integer payments whose
absolute values are bounded by $M$, with $m$ nodes of Max and $n$ nodes of Min. 
There exist pseudo-polynomial algorithms computing the value of mean payoff 
games. For instance, in~\cite{ZP-96} the authors describe
a value iteration algorithm with $O(mn^4M)$ complexity.
Using this we now show that all the linear pieces of 
a spectral function can be identified in pseudo-polynomial time. Note that we do not require
the oracle to compute optimal strategies here.

\begin{proposition}\label{p:reconstr} 
Let all the finite coefficients in~\eqref{problem-cones}
be integer with absolute values not exceeding $M$. Then,  
all the linear pieces that constitute the graph of
$\spectral(\lambda)$ can be identified in 
\[
O(M\min(m,n)^4)\times \MPGinteger(m+1,n+1,M(\min(m,n)+1)(1+4(\min(m,n)+1)^2))
\] 
operations. 
\end{proposition}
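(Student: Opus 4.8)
The plan is to show that the piecewise-linear graph of $\spectral(\lambda)$ can be reconstructed by sampling its value at a controlled set of rational points, each sampling requiring one call to a mean payoff game oracle. The key facts we exploit are all in Theorem~\ref{t:philambda} and Corollary~\ref{c:philambda}: the graph is $1$-Lipschitz, nondecreasing, piecewise-linear, linear outside $[-4M(\min(m,n)+1)^2, 4M(\min(m,n)+1)^2]$ by~\eqref{c:philambdaP1}, with breakpoints that are rationals with denominator at most $\min(m,n)+1$ by~\eqref{c:philambdaP3}, and each linear piece of the form $(\alpha+\beta\lambda)/k$ with $k\le \min(m,n)+1$, $\beta\in\{0,1\}$, and $|\alpha/k|\le 2M$ by~\eqref{t:philambdaP1}--\eqref{t:philambdaP2}.

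First I would discretize the bounded window. Since consecutive breakpoints differ by at least $1/(\min(m,n)+1)^2$ (the argument in the proof of~\eqref{c:philambdaP4}), the set of candidate breakpoint abscissae is contained in $\frac{1}{N}\mathbb{Z}$ where $N:=\operatorname{lcm}(1,\dots,\min(m,n)+1)$ — or, more cheaply, we may just take all rationals $a/k$ with $k\le\min(m,n)+1$ and $|a|\le 4M k(\min(m,n)+1)^2$ lying in the window, which is a set of size $O(M\min(m,n)^4)$. Evaluating $\spectral(\lambda)=\chi_{n+1}(U^\sharp V(\lambda))$ at each such rational $\lambda=a/k$ amounts to solving a mean payoff game: after clearing the denominator $k$ (multiplying all payments by $k$, which does not change optimal strategies and scales the value by $k$), the payments of the game with matrices $U$ and $V(\lambda)$ become integers with absolute value bounded by $M\cdot k + k|\lambda| \le M(\min(m,n)+1)(1 + 4(\min(m,n)+1)^2)$, using $|\lambda|\le 4M(\min(m,n)+1)^2$ inside the window. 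Hence one oracle call of cost $\MPGinteger(m+1,n+1,M(\min(m,n)+1)(1+4(\min(m,n)+1)^2))$ per sample point, and $O(M\min(m,n)^4)$ sample points in total, giving exactly the stated bound.

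Then I would argue that these samples determine the whole graph. On each maximal interval between two consecutive candidate abscissae, $\spectral$ is affine (no breakpoint can lie strictly inside by~\eqref{c:philambdaP3} and the spacing bound), so its restriction is the line through the two sampled endpoint values; outside the window, $\spectral$ is affine by~\eqref{c:philambdaP1}, and its slope there ($0$ or $1$) is obtained either from the two nearest samples or, as remarked after Corollary~\ref{c:philambda}, by playing the game once at $\lambda=\pm1$ with all finite coefficients zeroed out. Merging consecutive segments with equal slope and intercept recovers the genuine linear pieces of $\spectral$; there are at most $8M(\min(m,n)+1)^4+2$ of them by~\eqref{c:philambdaP4}, so the postprocessing is dominated by the oracle calls.

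The main obstacle — really the only delicate point — is bookkeeping the payment bound under clearing of denominators: one must be careful that the sampling points are taken with denominator at most $\min(m,n)+1$ (not the much larger $N=\operatorname{lcm}$), so that scaling by $k$ keeps the payments polynomially bounded, and simultaneously that this coarser grid still separates all breakpoints — which it does precisely because, by~\eqref{c:philambdaP3}, every breakpoint already has denominator at most $\min(m,n)+1$, hence already lies on the grid $\bigcup_{k\le\min(m,n)+1}\frac1k\mathbb{Z}$. Everything else is routine: counting grid points in the window, verifying the scaled payment bound against the oracle's argument, and the trivial interpolation/merging step.
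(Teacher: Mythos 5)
Your proposal is correct and follows essentially the same route as the paper: sample $\spectral$ at the $O(M\min(m,n)^4)$ rationals with denominator at most $\min(m,n)+1$ in the window $[-4M(\min(m,n)+1)^2,\,4M(\min(m,n)+1)^2]$, clear the denominator to get an integer game with payments bounded by $M(\min(m,n)+1)(1+4(\min(m,n)+1)^2)$, and interpolate between consecutive grid points (which works because every breakpoint already lies on the grid by Corollary~\ref{c:philambda}\eqref{c:philambdaP3}). Your write-up is merely more explicit than the paper's about why the samples determine the whole graph and about the merging step.
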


\begin{proof}
By Corollary~\ref{c:philambda} part~\eqref{c:philambdaP3}, 
the breaking points of $\spectral(\lambda)$ are rational numbers 
whose denominators do not exceed $\min(m,n)+1$. 
To identify the linear pieces that constitute the graph of
the spectral function, we only need to evaluate $\spectral(\lambda)$  
on such rational points  in the interval 
$[-4M(\min(m,n)+1)^2, 4M(\min(m,n)+1)^2 ]$, 
the number of which does not exceed $O(M\min(m,n)^4)$.

Further, when computing $\spectral(\lambda)$, 
the payments in the mean payoff games that the oracle works with 
are either $a$ or $a+\lambda$, where $a$ is an integer satisfying 
$|a|\leq M$ and $\lambda$ is a rational number in 
$[-4M(\min(m,n)+1)^2, 4M(\min(m,n)+1)^2 ]$
whose denominator does not exceed $\min(m,n)+1$.
The properties of the game will not change if we multiply 
all the payments by this denominator, obtaining a new game 
in which the payments are integers with absolute values bounded by 
$(\min(m,n)+1)(M+4M(\min(m,n)+1)^2)$. Then, 
the complexity of the mean payoff oracle will not exceed 
$\MPGinteger(m+1,n+1,(\min(m,n)+1)(M+4M(\min(m,n)+1)^2))$. Multiplying by
$O(M\min(m,n)^4)$ we get the claim. 
\end{proof}

\begin{remark}\label{r:pseudopol}
It follows that the tropical linear-fractional programming problem~\eqref{problem-cones}
can be solved in pseudo-polynomial time by reconstructing all the 
linear pieces that constitute the graph of $\spectral(\lambda)$.  However,
more efficient methods will be described in Subsection~\ref{ss:bisnewt}.
\end{remark}

\begin{remark}\label{r:parmpg}
A similar spectral function has been introduced in~\cite{GStwosided10} to compute the set of solutions $\lambda$ of the two-sided eigenproblem $Ax=\lambda Bx$.
The present approach can be extended to a larger
class of parametric games, in which the payments are piecewise
affine functions of the parameter $\lambda$, with integer
slopes. See~\cite{Ser-lastdep}.
\end{remark}

\subsection{Strategies as certificates}\label{ss:certificates}
In the classical simplex method, the optimality of a feasible
solution is certified by the sign of Lagrange multipliers. 
In the tropical case, following the idea of~\cite{AGK-10},
we shall show that the certificate is of a different nature: 
it is a strategy.
We shall also use such strategies to guide the next iteration 
of Newton method in Subsection~\ref{ss:bisnewt}, 
when the current feasible solution is not optimal.

\begin{definition}[Left and right optimal strategies]
\label{d:lropt}
A strategy $\sigma-$ for player Max (resp.\ $\tau-$ for player Min) 
is {\em left optimal} at $\lambda\in\R$, 
if there exists $\epsilon>0$ such that
\[ 
\spectral(\mu )=\spectral^{\sigma-}(\mu ) 
\qquad (\makebox{resp.\ }\spectral(\mu ) =\spectral_{\tau-}(\mu)) 
\qquad \forall \mu \in [\lambda -\epsilon ,\lambda ] \enspace.
\]
Right optimal strategies $\sigma+$ and $\tau+$ are defined in a similar way, 
replacing $[\lambda -\epsilon ,\lambda ]$ by $[\lambda ,\lambda +\epsilon ]$.
\end{definition}

The existence of left and right optimal strategies
at each point follows readily from~\eqref{sl-duality},
together with the finiteness of the number of strategies
and the piecewise affine character of each function 
$\phi^\sigma(\lambda)$ and $\phi_\tau(\lambda)$. 

\begin{theorem}\label{1st-cert}
The tropical linear-fractional programming problem~\eqref{problem-cones}
has the optimal value $\lambda^\ast \in \R$ if, and only if,
$\spectral(\lambda^\ast)\geq 0$ and there exists a strategy
$\tau $ for player Min such that the digraph
$\Bipdig_{\lambda^\ast }^{\tau}$ satisfies the following conditions:
\begin{enumerate}[(i)]
\item all cycles accessible from node $n+1$ of Min have nonpositive weight, 
\item any cycle of zero weight accessible from node $n+1$ of Min
passes through node $m+1$ of Max.
\end{enumerate}
Moreover, these conditions are always satisfied 
when $\tau$ is left optimal at $\lambda^\ast $.
\end{theorem}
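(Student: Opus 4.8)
The plan is to prove the two directions separately, using the spectral function reformulation from Proposition-Definition~\ref{pd-specf} together with Theorem~\ref{chi-axbx} and the finite-duration game interpretation~\eqref{val-exist}. First observe the basic translation: by Proposition-Definition~\ref{pd-specf}, the optimal value equals $\min\{\lambda\mid\spectral(\lambda)\geq 0\}$, where $\spectral(\lambda)=\chi_{n+1}(U^\sharp V(\lambda))$ is $1$-Lipschitz and nondecreasing by Theorem~\ref{t:philambda}\eqref{t:philambdaP1}; hence if $\lambda^\ast\in\R$ is finite, it is characterized by $\spectral(\lambda^\ast)\geq 0$ together with $\spectral(\mu)<0$ for all $\mu<\lambda^\ast$, equivalently (by continuity and piecewise-linearity) $\spectral$ has negative slope behavior just to the left. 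I would recast conditions (i)--(ii) on $\Bipdig_{\lambda^\ast}^\tau$ in terms of $\spectral_\tau$: condition (i) says every elementary cycle accessible from node $n+1$ in the min-only subgame has weight $\leq 0$, and by~\eqref{phi-Phi} (where $\spectral_\tau(\lambda)=\max_\sigma\Phi_{U,V(\lambda)}(n+1,\tau,\sigma)$, a max of mean cycle weights) this is exactly $\spectral_\tau(\lambda^\ast)\geq 0$ is \emph{false or} boundary... more precisely, combining with $\spectral\leq\spectral_\tau$ from~\eqref{sl-duality} and the requirement $\spectral(\lambda^\ast)\geq 0$, conditions (i)--(ii) together should be equivalent to: $\spectral_\tau(\lambda^\ast)\geq 0$, and the cycle(s) achieving weight $0$ all pass through the $\lambda$-dependent node $m+1$, which is precisely what forces the slope of $\spectral_\tau$ to be strictly positive at $\lambda^\ast$ (since only arcs out of $m+1$ depend on $\lambda$, a zero-weight cycle avoiding $m+1$ would keep $\spectral_\tau$ constant near $\lambda^\ast$). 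So conditions (i)--(ii) $\iff$ $\spectral_\tau(\lambda^\ast)=0$ with $\spectral_\tau$ strictly increasing immediately to the left of $\lambda^\ast$, OR $\spectral_\tau(\lambda^\ast)>0$... I need to be careful here and split on the slope.

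For the ``if'' direction: assume $\spectral(\lambda^\ast)\geq 0$ and $\tau$ satisfies (i)--(ii). Condition (i) combined with~\eqref{phi-Phi} gives $\spectral_\tau(\lambda^\ast)\geq 0$, so by~\eqref{sl-duality}, $\spectral(\lambda^\ast)\leq\spectral_\tau(\lambda^\ast)$ — that's the wrong direction, but I want a \emph{lower} bound on the optimal value, i.e., I want $\spectral(\mu)<0$ for $\mu<\lambda^\ast$. For this I would use: for $\mu<\lambda^\ast$, every cycle weight in $\Bipdig_\mu^\tau$ that passed through $m+1$ has strictly decreased (its $\lambda$-slope $\beta/k$ with $\beta=1$), while by (ii) every zero-weight cycle at $\lambda^\ast$ passed through $m+1$, and every other accessible cycle had weight $<0$ at $\lambda^\ast$ by (i); a small enough decrease keeps the latter negative (finitely many cycles), and strictly decreases the former below zero. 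Hence $\spectral_\tau(\mu)<0$ for $\mu\in(\lambda^\ast-\epsilon,\lambda^\ast)$, so $\spectral(\mu)\leq\spectral_\tau(\mu)<0$ there, and by monotonicity of $\spectral$, $\spectral(\mu)<0$ for \emph{all} $\mu<\lambda^\ast$. Together with $\spectral(\lambda^\ast)\geq 0$ this gives $\lambda^\ast=\min\{\lambda\mid\spectral(\lambda)\geq 0\}$, i.e., $\lambda^\ast$ is optimal. (The case distinction where $\spectral_\tau(\lambda^\ast)>0$ strictly — then there are no zero-weight cycles, only negative ones, and the same finite-decrease argument applies directly.)

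For the ``only if'' direction and the final sentence: suppose $\lambda^\ast$ is optimal. Then $\spectral(\lambda^\ast)\geq 0$. Take $\tau=\tau-$ a strategy for Min that is left optimal at $\lambda^\ast$ (exists by the remark following Definition~\ref{d:lropt}); then $\spectral(\mu)=\spectral_{\tau-}(\mu)$ for $\mu\in[\lambda^\ast-\epsilon,\lambda^\ast]$. Since $\spectral(\mu)<0$ for $\mu<\lambda^\ast$ (optimality) and $\spectral(\lambda^\ast)\geq 0$, the same holds for $\spectral_{\tau-}$; in particular $\spectral_{\tau-}(\lambda^\ast)\geq 0$, which by~\eqref{phi-Phi} forces all accessible cycles in $\Bipdig_{\lambda^\ast}^{\tau-}$ to have weight $\leq 0$: condition (i). For condition (ii): if some accessible cycle had weight exactly $0$ at $\lambda^\ast$ and avoided node $m+1$, then its weight would be independent of $\lambda$ (only arcs out of $m+1$ are $\lambda$-dependent), so it would contribute $0$ to the max~\eqref{phi-Phi} for all $\mu$ near $\lambda^\ast$, giving $\spectral_{\tau-}(\mu)\geq 0$ for $\mu\in(\lambda^\ast-\epsilon,\lambda^\ast)$, hence $\spectral(\mu)\geq 0$ there — contradicting optimality of $\lambda^\ast$. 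So (ii) holds, and simultaneously this shows any left-optimal $\tau$ works, proving the ``moreover''.

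The main obstacle I expect is bookkeeping around the boundary/slope case analysis: making precise the claim that conditions (i)--(ii) encode exactly ``$\spectral_\tau(\lambda^\ast)\geq 0$ and $\spectral_\tau$ is locally increasing to the left at $\lambda^\ast$'', handling correctly whether $\spectral_\tau(\lambda^\ast)$ is zero or strictly positive, and the degenerate possibility that $\lambda^\ast$ is itself a breakpoint of $\spectral_\tau$. One must also verify the ``finitely many accessible cycles, so a uniform $\epsilon$ works'' step cleanly, and check that accessibility from node $n+1$ is preserved when passing from $\lambda^\ast$ to nearby $\mu$ (it is, since the digraph structure $\Bipdig_\mu^\tau$ doesn't change with $\mu$ — only arc weights do). The interplay between $\spectral$ and $\spectral_\tau$ via $\spectral\leq\spectral_\tau$ is only an inequality, so the argument crucially routes the strict-negativity conclusion through $\spectral_\tau$ and then down to $\spectral$, never the other way.
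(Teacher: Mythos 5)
Your overall strategy is the same as the paper's: characterize optimality of $\lambda^\ast$ as $\spectral(\lambda^\ast)\geq 0$ together with $\spectral(\mu)<0$ for all $\mu<\lambda^\ast$, read conditions (i)--(ii) off the representation~\eqref{phi-Phi} of $\spectral_\tau$ as the maximal mean weight of cycles accessible from node $n+1$ in $\Bipdig_\lambda^\tau$, route the converse through the inequality $\spectral\leq\spectral_\tau$ from~\eqref{sl-duality}, and use a left optimal strategy for the direct implication. Your ``if'' direction is sound; in fact the $\epsilon$ is unnecessary, since every cycle mean is of the form $(\alpha+\beta\lambda)/k$ with $\beta\in\{0,1\}$ and hence nondecreasing in $\lambda$, so cycles that are negative at $\lambda^\ast$ stay negative for \emph{all} $\mu<\lambda^\ast$ and the zero-weight ones, passing through $m+1$, become strictly negative; monotonicity of $\spectral$ is then not even needed to cover all $\mu<\lambda^\ast$.

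There is, however, a genuine error in your derivation of condition (i) in the ``only if'' direction. You assert that $\spectral_{\tau-}(\lambda^\ast)\geq 0$ ``forces all accessible cycles in $\Bipdig_{\lambda^\ast}^{\tau-}$ to have weight $\leq 0$''. This implication is false: by~\eqref{phi-Phi}, $\spectral_{\tau-}(\lambda^\ast)$ is the \emph{maximum} of the accessible cycle means, so its being nonnegative says nothing about individual cycles being nonpositive; what you need is the opposite inequality $\spectral_{\tau-}(\lambda^\ast)\leq 0$. (The same sign slip appears, harmlessly since you abandon that line, at the start of your ``if'' direction, where you claim condition (i) gives $\spectral_\tau(\lambda^\ast)\geq 0$ --- it gives $\spectral_\tau(\lambda^\ast)\leq 0$.) The repair uses material you already have on the page: by left optimality and optimality of $\lambda^\ast$ one has $\spectral_{\tau-}(\mu)=\spectral(\mu)<0$ on a left neighborhood of $\lambda^\ast$, and since $\spectral_{\tau-}$ is continuous (piecewise linear), $\spectral_{\tau-}(\lambda^\ast)\leq 0$; combined with $\spectral_{\tau-}(\lambda^\ast)=\spectral(\lambda^\ast)\geq 0$ this pins down $\spectral_{\tau-}(\lambda^\ast)=0$, and condition (i) follows because the maximal cycle mean is zero. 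With this correction your argument coincides with the paper's proof.
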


\begin{proof}
The tropical linear-fractional programming problem~\eqref{problem-cones}
has the optimal value $\lambda^\ast $ if, and only if,
$\spectral(\lambda^\ast )=0$ and $\spectral(\lambda)<0$ for all $\lambda <\lambda^\ast $.
If $\tau$ is any left optimal strategy at $\lambda^*$,
then the previous conditions are satisfied if, and only if,
$\spectral_{\tau}(\lambda^\ast )=0$ and $\spectral_{\tau}(\lambda)$ has nonzero
left derivative at $\lambda^*$.

By~\eqref{phi-Phi}, or~\eqref{mcm-def} and~\eqref{chi-expl}, we know that 
$\spectral_{\tau}(\lambda)$ 
is the maximal cycle mean (per turn) over all cycles in $\Bipdig^{\tau}_\lambda$
accessible from node $n+1$ of Min.
It follows that
$\spectral_{\tau}(\lambda^\ast )=0$ if, and only if,
all cycles in $\Bipdig^{\tau}_{\lambda^\ast  }$
accessible from node $n+1$ of Min have nonpositive weight 
and at least one of them has zero weight. Moreover,
$\spectral_{\tau}(\lambda )$ has nonzero left derivative at $\lambda^\ast $ if,
and only if, any zero-weight cycle in
$\Bipdig^{\tau}_{\lambda^\ast  }$ accessible from node $n+1$ of Min has
arcs with weights depending on $\lambda $,
which can only occur if it passes through node $m+1$ of Max.
Thus, the conditions of the theorem are necessary and they are satisfied by 
any left optimal strategy $\tau $ at $\lambda^\ast $.

Assume now that there exists a strategy $\tau$ satisfying the conditions
of the theorem. Then, the argument above shows that
$\spectral_\tau(\lambda^\ast)\leq 0$ and $\spectral_\tau(\lambda)<0$
for all $\lambda < \lambda^\ast$. Since $\spectral(\lambda^\ast)\geq 0$ and
by~\eqref{sl-duality} we have $\spectral(\lambda)\leq \spectral_\tau(\lambda)$
for all $\lambda$, it follows that $\spectral(\lambda^\ast)=0$ and $\spectral(\lambda)<0$
for all $\lambda < \lambda^\ast$. Therefore,
$\lambda^\ast $ is the optimal value of
the tropical linear-fractional programming problem~\eqref{problem-cones}.
\end{proof}

In the same way, we can certify when
the tropical linear-fractional programming problem~\eqref{problem-cones} is unbounded.

\begin{theorem}\label{2nd-cert}
The tropical linear-fractional programming problem~\eqref{problem-cones} is unbounded if,
and only if, there exists a strategy $\sigma$ for player Max such that all 
cycles in the digraph $\Bipdig^{\sigma}_0$ accessible from node $n+1$ 
of Min do not contain node $m+1$ of Max and have nonnegative weight.
\end{theorem}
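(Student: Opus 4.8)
The plan is to reduce the unboundedness of~\eqref{problem-cones} to a property of a single partial spectral function $\spectral^{\sigma}$, and then to read that property off the digraph, dualizing the analysis already carried out in the proof of Theorem~\ref{1st-cert}. By Proposition-Definition~\ref{pd-specf} the problem is unbounded exactly when its optimal value $\min\{\lambda\mid\spectral(\lambda)\geq 0\}$ equals $-\infty$, that is, when $\spectral(\lambda)\geq 0$ for arbitrarily negative $\lambda$. I would then record three elementary observations about the digraph $\Bipdig^{\sigma}_{\lambda}$ attached to a fixed positional strategy $\sigma\in S$ of player Max: (a) its underlying unweighted graph, hence its set of cycles and its accessibility relation, does not depend on $\lambda$, since a weight $\lambda+q_l$ or $\lambda+s$ is finite for one real $\lambda$ iff it is finite for all; (b) a cycle avoiding node $m+1$ of Max has weight independent of $\lambda$; (c) a cycle through node $m+1$ of Max contains exactly one arc out of $m+1$, namely $(m+1,\sigma(m+1))$, of weight $\lambda+q_{\sigma(m+1)}$ or $\lambda+s$, so its weight is affine in $\lambda$ with slope $1$ and tends to $-\infty$ as $\lambda\to-\infty$.

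Next I would recall the combinatorial meaning of $\spectral^{\sigma}$: exactly as in the proof of Theorem~\ref{1st-cert}, but interchanging the roles of Max and Min (so that ``maximal cycle mean'' becomes ``minimal cycle mean''), combining~\eqref{phi-Phi} with~\eqref{mcm-def} and~\eqref{chi-expl} shows that $\spectral^{\sigma}(\lambda)=\chi_{n+1}(U^{\sharp}V^{\sigma}(\lambda))$ equals the minimal cycle mean per turn over the cycles of $\Bipdig^{\sigma}_{\lambda}$ accessible from node $n+1$ of Min; by Assumptions~1 and~2 there is always at least one such cycle, and a cycle has nonnegative weight iff its mean per turn is nonnegative.

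For the ``if'' direction, suppose $\sigma$ satisfies the two conditions in $\Bipdig^{\sigma}_{0}$. By observation (a) they hold in $\Bipdig^{\sigma}_{\lambda}$ for every $\lambda\in\R$, and by (b) the weights of the cycles accessible from $n+1$ are then $\lambda$-independent and nonnegative; hence $\spectral^{\sigma}$ is a nonnegative constant, and by~\eqref{sl-duality} we get $\spectral(\lambda)\geq\spectral^{\sigma}(\lambda)\geq 0$ for all $\lambda$, so the problem is unbounded.

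For the ``only if'' direction, assume the problem is unbounded, so $\spectral(\lambda_k)\geq 0$ along some sequence $\lambda_k\to-\infty$. Since $\spectral=\max_{\sigma\in S}\spectral^{\sigma}$ and $S$ is finite, a pigeonhole argument produces a single $\sigma\in S$ with $\spectral^{\sigma}(\lambda_k)\geq 0$ for infinitely many $k$, hence with $\spectral^{\sigma}\geq 0$ at arbitrarily negative $\lambda$. If some cycle through node $m+1$ of Max were accessible from $n+1$ in $\Bipdig^{\sigma}_{\lambda}$, then by (c) its mean per turn, and therefore $\spectral^{\sigma}(\lambda)$, would tend to $-\infty$ as $\lambda\to-\infty$, a contradiction; so no such cycle exists, which by (a) is the first half of the required condition, holding in particular in $\Bipdig^{\sigma}_{0}$. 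By (b), all cycles accessible from $n+1$ then have $\lambda$-independent weight, so $\spectral^{\sigma}$ is the constant equal to the minimum of their means per turn; since this constant is nonnegative, each such cycle has nonnegative weight, giving the second half of the condition. The only non-routine point is the extraction of this single uniform strategy, which relies on the finiteness of $S$ (alternatively, Theorem~\ref{t:philambda}\eqref{t:philambdaP3} shows that each $\spectral^{\sigma}$ has finitely many linear pieces, so the relevant one is eventually constant); I do not expect any serious obstacle beyond it.
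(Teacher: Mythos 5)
Your proposal is correct and follows essentially the same route as the paper's proof: reduce unboundedness to the existence of a single strategy $\sigma$ with $\spectral^{\sigma}\geq 0$ everywhere via the first equality in~\eqref{sl-duality}, interpret $\spectral^{\sigma}(\lambda)$ as the minimal cycle mean over cycles of $\Bipdig^{\sigma}_{\lambda}$ accessible from node $n+1$, and observe that any accessible cycle through node $m+1$ has weight that decreases without bound as $\lambda\to-\infty$ while the remaining cycles have $\lambda$-independent weight. Your explicit pigeonhole extraction of the uniform strategy (combined with the monotonicity of $\spectral^{\sigma}$) just makes precise a step the paper leaves implicit, so there is no substantive difference.
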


\begin{proof}
We know that the tropical linear-fractional programming problem~\eqref{problem-cones} 
is unbounded if, and only if, $\spectral(\lambda)\geq 0$ for all $\lambda$.
By the first equality in~\eqref{sl-duality},
the latter condition is satisfied if, and only if,
there exists a strategy $\sigma$ for player Max such that
$\spectral^{\sigma}(\lambda)\geq 0$ for all $\lambda$.
Note that the weight of a cycle in $\Bipdig^{\sigma}_\lambda $
that passes through node $m+1$ of Max
can be made arbitrarily small by decreasing $\lambda $,
because this cycle must contain an arc whose weight depends on $\lambda $.
Therefore, using the fact that
$\spectral^{\sigma}(\lambda)$ %
is the minimal cycle mean (per turn) over all cycles in
$\Bipdig^{\sigma}_{\lambda}$ accessible from node $n+1$ of Min
(see~\eqref{phi-Phi}, or~\eqref{mcm-def} and~\eqref{chi-expl}),
it follows that $\spectral^{\sigma}(\lambda)\geq 0$ for all $\lambda$ if, and only if,
all cycles in $\Bipdig^{\sigma}_0$
accessible from node $n+1$ of Min have nonnegative weight and
do not pass through node $m+1$ of Max.
\end{proof}
\begin{remark}
Theorems~\ref{1st-cert} and~\ref{2nd-cert} are inspired by Theorem~18 and Corollary~20 of~\cite{AGK-10}, in which similar certificates are given
for the problem of checking whether an implication of the form
$Ax\leq Bx\implies px \leq qx$ holds.
The latter can be cast as a special tropical linear-fractional
programming problem.
\end{remark}
\begin{example}\label{Example2}
Consider the tropical linear programming problem 
given by the minimization of $(2+x_1)\vee (-4+x_2)$ 
over the tropical polyhedron of $\Rmax^2$ defined
by the system of inequalities $Ax\vee c\leq Bx\vee d$, 
where
\[
A=
\left(\begin{array}{cc}
-\infty & -\infty \\
-\infty & -\infty \\
-\infty & -\infty \\
-\infty & -3 \\
-\infty & -4 \\
-\infty & -5 \\
-\infty & -6 
\end{array}\right) \; , \quad
c=
\left(\begin{array}{c}
0\\
0\\
0\\
0\\
-\infty\\
-\infty\\
-\infty
\end{array}\right) \; ,\quad
B=
\left(\begin{array}{cc}
-2 & 0 \\
0 & -1 \\
1 & -2 \\
2 & -\infty \\
0 & -\infty \\
-2 & -\infty \\
-4 & -\infty 
\end{array}\right) \; , \quad
d=
\left(\begin{array}{c}
-\infty\\
-\infty\\
-\infty\\
-\infty\\
0\\
0\\
0
\end{array}\right) \; .
\]
This polyhedron is displayed on the left-hand side of 
Figure~\ref{tropprogs-ex} below.
The direction of minimization of $(2+x_1)\vee (-4+x_2)$ 
is shown there by a dotted line above the polyhedron, 
together with the optimal tropical hyperplane $(2+x_1)\vee (-4+x_2)=0$. 
The bipartite digraph $\Bipdig_\lambda $
corresponding to this problem
is depicted in Figure~\ref{BipDigrapfExMin}, where the nodes of Max
are represented by squares and the nodes of Min by circles.
Note that in this case we have $m=7$ and $n=2$.

The equivalent homogeneous version of this problem 
(as described in Subsection~\ref{ss:Pformulations}) 
is to minimize $\lambda $ subject to 
$u y\leq \lambda + v y$, $C y \leq D y$, and $y_3\neq -\infty$, 
where $C=[A,c]$, $D=[B,d]$, $u=(2,-4,-\infty)$ and $v=(-\infty,-\infty,0)$. 

\begin{figure}
\begin{center}
\input{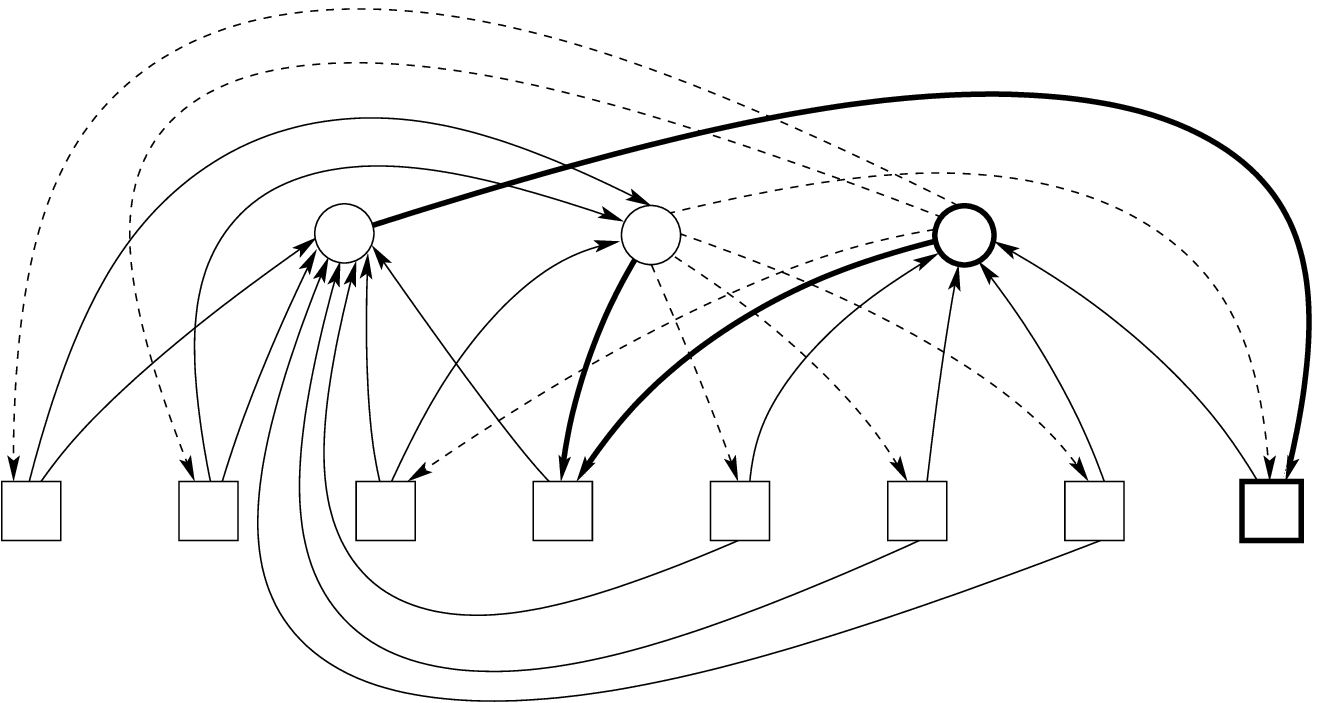}
\end{center}
\caption{The bipartite digraph $\Bipdig_\lambda$ of the mean payoff game 
associated with the tropical linear programming problem of 
Example~\ref{Example2}. 
Special nodes $n+1$ and $m+1$ are in bold as well as a strategy $\tau $ 
for player Min certifying the optimality of $\lambda^\ast =0$. 
The subdigraph $\Bipdig^\tau_{\lambda^\ast}$ 
is obtained by deleting the dashed arcs.}
\label{BipDigrapfExMin}
\end{figure}

Thanks to Theorem~\ref{1st-cert}, it is possible to certify 
that $\lambda^\ast=0$ is the optimal value of this problem. 
To show this, consider the strategy $\tau $ for player Min defined by: 
$\tau (1)=8$, $\tau(2)= 4$ and $\tau(3)=4$, which is  
represented in bold in Figure~\ref{BipDigrapfExMin}. 
Observe that the resulting subdigraph $\Bipdig_{\lambda^\ast }^{\tau}$ 
contains only one cycle, which is accessible from node $n+1$ of Min 
(indeed it passes through this node), 
has zero weight and passes through node $m+1$ of Max. 
Moreover, by Theorem~\ref{chi-axbx}, we have 
$\spectral(\lambda^\ast)\geq 0$ because $y=(-2,2,0)^T$ 
satisfies $C y \leq D y$ and $u y\leq \lambda^\ast + v y= v y$. 
Therefore, by Theorem~\ref{1st-cert}, $\lambda^\ast=0$ is the optimal value.  
\end{example}

The special cases~\eqref{problem-straight} of the tropical linear-fractional  
programming problem~\eqref{problem-cones} have been studied in~\cite{BA-08}, 
where necessary and sufficient conditions for these 
problems to be unbounded were in particular given. 
We next show that under the assumptions of~\cite{BA-08}, 
which require the entries of all vectors and matrices to be finite, 
these conditions turn out to be equivalent to the one given in 
Theorem~\ref{2nd-cert}. 

Theorem~3.3 of~\cite{BA-08} shows that, when only finite entries are 
considered, the minimization problem in~\eqref{problem-straight} 
is unbounded if, and only if, $c\leq d$. Under the finiteness assumption, 
this condition is equivalent to the one given in Theorem~\ref{2nd-cert}. 
To show this, in the first place observe that in this case 
the associated digraph $\Bipdig_\lambda $ 
(see Figure~\ref{FigureBipDigraGen}) 
contains arcs connecting any node of Min $[n+1]$ with any the node of 
Max $[m+1]$, with exception of the arc connecting node $n+1$ with 
node $m+1$, and arcs connecting any node of Max $[m+1]$ 
with any node of Min $[n+1]$, with exception of the arcs connecting 
node $m+1$ with nodes in $[n]$. Thus, if we define the strategy 
$\sigma $ for player Max by $\sigma (i)=n+1$ for all $i\in [m+1]$, 
it can be checked that the only cycles in $\Bipdig^{\sigma}_0 $ 
accessible from node $n+1$ are of the form 
$n+1\rightarrow i_1 \rightarrow n+1 \rightarrow \cdots \rightarrow n+1 \rightarrow i_k \rightarrow n+1$ for some $i_1,\ldots ,i_k \in [m]$. 
Since the weight of such a cycle is $d_{i_1}-c_{i_1}+\cdots +d_{i_k}-c_{i_k}$, 
the strategy $\sigma $ satisfies the 
conditions in Theorem~\ref{2nd-cert} if $c\leq d$. 
Conversely, assume that a strategy $\sigma $ for player 
Max satisfies the conditions in Theorem~\ref{2nd-cert}. Then, 
the only possible value for $\sigma (m+1)$ is $n+1$, 
and we must also have $\sigma (i)=n+1$ for all $i\in [m]$,
because if $\sigma (i)=j\neq n+1$ for some $i\in [m]$, 
$\Bipdig^{\sigma}_0 $ would contain the cycle 
$m+1\rightarrow n+1\rightarrow i\rightarrow j\rightarrow m+1$, 
contradicting the fact that no cycle accessible from node 
$n+1$ of Min passes through node $m+1$ of Max. Now, 
since $\Bipdig^{\sigma}_0 $ contains the cycles 
$n+1\rightarrow i \rightarrow n+1$ for $i\in [m]$, 
which are accessible from node $n+1$ of Min, 
the weights of these cycles $d_i-c_i$ must be nonnegative, 
implying that $c\leq d$. 

\begin{figure}
\begin{center}
\input{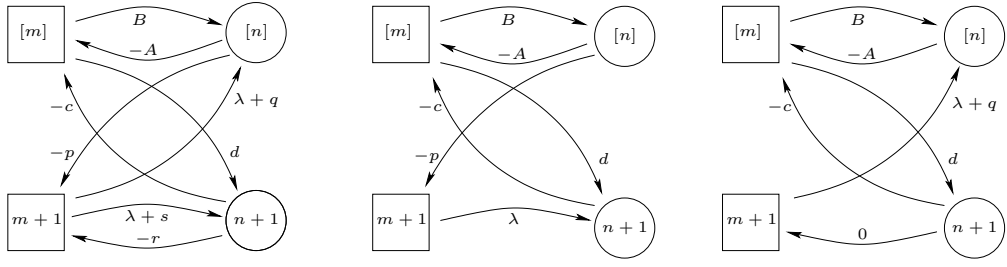}
\end{center}
\caption{On the left: the bipartite digraph of the mean payoff game associated 
with the tropical linear programming problem~\eqref{problem-sets}. 
The nodes of Max are represented by squares and the nodes of 
Min by circles. In the middle and on the right: 
the bipartite digraphs of the special cases~\eqref{problem-straight}, 
for the minimization problem and the maximization problem, 
respectively.}
\label{FigureBipDigraGen}
\end{figure}

Regarding the maximization problem in~\eqref{problem-straight}, 
Theorem 3.4 of~\cite{BA-08} shows that this problem is unbounded if, 
and only if, the system $A x \leq B x$ has a finite solution.  
In this case, due to the finiteness assumption, it follows that 
the associated digraph $\Bipdig_\lambda $ 
(see Figure~\ref{FigureBipDigraGen}) 
contains arcs connecting any node of Max $[m+1]$ 
with any node of Min $[n+1]$, with exception of the arc 
connecting node $m+1$ with node $n+1$, 
and arcs connecting any node of Min $[n+1]$ with any the node of 
Max $[m+1]$, with exception of the arcs connecting nodes in $[n]$ 
with node $m+1$. If the system $A x \leq B x$ has a finite solution, 
from Theorem~\ref{chi-axbx} and~\eqref{e:chi-duality} 
it follows that there exists a strategy 
$\bar{\sigma}: [m]\mapsto [n]$ such that 
$\chi(A^{\sharp}B^{\bar{\sigma}}) = \chi(A^{\sharp}B)\geq 0$. 
By~\eqref{mcm-def} and~\eqref{chi-expl}, 
this implies that any cycle in $\bar{\Bipdig}^{\bar{\sigma}}$ 
has nonnegative weight, where $\bar{\Bipdig}$ is the bipartite 
digraph of the mean payoff game associated with the matrices $A$ and $B$. 
If we define the strategy $\sigma(i)=\bar{\sigma}(i)$ for all $i\in [m]$ 
and $\sigma(m+1)=j$ for some $j\in [n]$, 
then $\sigma $ satisfies the conditions of Theorem~\ref{2nd-cert} 
because the cycles accessible from node $n+1$ of Min in $\Bipdig^{\sigma}_0 $ 
are precisely the cycles in $\bar{\Bipdig}^{\bar{\sigma}}$ 
and there is no cycle containing node $m+1$ of Max in $\Bipdig^{\sigma}_0 $. 
Conversely, if a strategy $\sigma $ for player Max satisfies the conditions 
in Theorem~\ref{2nd-cert}, then necessarily we have $\sigma (i)\in [n]$ 
for all $i\in [m]$, because if $\sigma (i)=n+1 $ for some $i\in [m]$, 
$\Bipdig^{\sigma}_0 $ would contain the cycle 
$n+1\rightarrow m+1 \rightarrow j \rightarrow i \rightarrow n+1$  
where $j=\sigma (m+1)\in [n]$, contradicting the fact that there is no cycle 
in $\Bipdig^{\sigma}_0 $ accessible from node $n+1$ of Min passing through node 
$m+1$ of Max. Now, if we define $\bar{\sigma}(i)=\sigma(i)$ for all $i\in [m]$, 
the cycles accessible from node $n+1$ of Min in $\Bipdig^{\sigma}_0 $ 
are precisely the cycles in $\bar{\Bipdig}^{\bar{\sigma}}$, 
which therefore have nonnegative weight. Then, 
by~\eqref{mcm-def} and~\eqref{chi-expl} we have 
$\chi(A^{\sharp}B^{\bar{\sigma}})\geq 0$, 
and so from Theorem~\ref{chi-axbx} and~\eqref{e:chi-duality} 
we conclude that the system $A x\leq Bx $ has a finite solution.         

\begin{remark}
If the strategies $\sigma$ or $\tau$ and the scalar $\lambda^*$ are fixed
(considered as inputs)
the conditions of Theorems~\ref{1st-cert} and~\ref{2nd-cert}, 
i.e.\ the validity of the certificates, 
can be checked in polynomial time. 

To see this, in the first place assume that $\tau$ and $\lambda^*$ are given. 
Using Karp's algorithm, compute the maximal cycle mean of 
each strongly connected component of 
$\Bipdig_{\lambda^\ast }^{\tau}$ that is accessible from node $n+1$ of Min. 
The certificate is valid only if these maximal cycle means are 
nonpositive and one of them is zero. To check the second condition of 
Theorem~\ref{1st-cert}, delete node $m+1$ of Max (and the arcs adjacent 
to it) from $\Bipdig_{\lambda^\ast }^{\tau}$ and compute 
for the resulting digraph (using again Karp's algorithm) 
the maximal cycle mean of each strongly connected component 
accessible from node $n+1$ of Min. To be valid, 
all these maximal cycle means must be negative. 
Observe that in Theorem~\ref{1st-cert} 
we also assume that $\phi(\lambda^\ast )\geq 0$.  
By~\eqref{sl-duality}, this can be certified by a 
strategy $\sigma $ for player Max such that the 
minimal cycle mean of any strongly connected component 
of $\Bipdig^{\sigma}_{\lambda^\ast }$ accessible from node $n+1$ of Min 
is nonnegative, which can be checked by applying Karp's 
algorithm to each of these components. By Theorem~\ref{chi-axbx}, 
another possibility is to exhibit a vector $y$ such that 
$C y \leq D y$, $u y\leq \lambda^\ast + v y$ and $y_{n+1}\neq -\infty$. 

Assume now that $\sigma$ is given. 
To check the validity of the certificate in Theorem~\ref{2nd-cert}, 
decompose first $\Bipdig^{\sigma}_0$ in strongly connected components  
and see whether the component containing node $m+1$ of Max is trivial 
(i.e.\ contains just this node) or it is not accessible from node $n+1$ of Min. 
If this is the case, compute the minimal cycle mean of each 
strongly connected component of $\Bipdig^{\sigma}_0$ 
accessible from node $n+1$ of Min by applying 
Karp's algorithm. Then, the certificate is valid if each of 
these minimal cycle means is nonnegative.

\end{remark}

\subsection{Bisection and Newton methods for tropical linear-fractional programming}\label{ss:bisnewt}

In~\eqref{problem-simple}, we need to find the least
$\lambda $ such that $\spectral(\lambda )\geq 0$, where $\spectral(\lambda )$
is nondecreasing and Lipschitz continuous.
Thus, we can consider certain classical
methods for finding zeroes of ``good enough'' functions of one variable. 
In particular, the bisection method for $\spectral(\lambda)$ 
corresponds to the approach of~\cite{BA-08}. More specifically, 
it can be formulated as follows, 
when the finite coefficients in~\eqref{problem-cones} are integers.

\begin{myalgorithm}\label{a:bisection}
Bisection method 

{\bf Start.} A point $\overline{\lambda}_0$ such that 
$\spectral(\overline{\lambda}_0)\geq 0$
and a point $\underline{\lambda}_0$ such that 
$\spectral(\underline{\lambda}_0)< 0$.

{\bf Iteration $k$.} Let  
$\lambda =\lceil(\overline{\lambda}_{k-1}+\underline{\lambda}_{k-1})/2\rceil$.
If $\spectral(\lambda)\geq 0$, then set 
$\overline{\lambda}_{k}=\lambda$ and 
$\underline{\lambda}_{k}=\underline{\lambda}_{k-1}$.
Otherwise, set $\overline{\lambda}_{k}=\overline{\lambda}_{k-1}$ 
and $\underline{\lambda}_{k}=\lambda$.

{\bf Stop.} Verify $\overline{\lambda}_{k}-\underline{\lambda}_{k}=1$. 
If true, return $\overline{\lambda}_{k}$.
\end{myalgorithm}

For this method, which uses that tropical linear-fractional programming preserves 
integrity (Corollary~\ref{c:philambda} part~\eqref{c:philambdaP4}),
it is not important to know the actual value of $\spectral(\lambda)$, 
but just whether $\spectral(\lambda)\geq 0$, 
i.e.\ whether $Uy\leq V(\lambda)y$ is solvable with $y_{n+1}\neq -\infty$. 

Further, the concept of (left, right) optimal strategy, 
see Definition~\ref{d:lropt}, 
yields an analogue of (left, right) derivative, 
and leads to the following analogue of Newton method, 
which does not have any integer restriction.

\begin{myalgorithm}\label{a:pos-newton}
Positive Newton method 

{\bf Start.} A point $\lambda_0$ such that $\spectral(\lambda_0)\geq 0$. 

{\bf Iteration $k$.} Find a left optimal strategy 
$\sigma $ for player Max at $\lambda_{k-1}$ and compute 
$\lambda_k=\min\{\lambda \in\Rmax \mid \spectral^{\sigma}(\lambda)\geq 0\}$. 

{\bf Stop.} Verify $\lambda_k=\lambda_{k-1}$ or $\lambda_k=-\infty$. 
If true, return $\lambda_{k}$.
\end{myalgorithm}

It remains to explain how each step of this algorithm can be implemented. 
We shall see that $\lambda_k$ can be easily computed
(reduction to a shortest path problem) and that finding
left optimal strategies can be done by existing
algorithms for mean payoff games. 

For the sake of comparison, we state a dual
version of Algorithm~\ref{a:pos-newton}.

\begin{myalgorithm}\label{a:neg-newton}
Negative Newton method  

{\bf Start.} A point $\lambda_0$ such that $\spectral(\lambda_0)< 0$.  

{\bf Iteration $k$.} Find a (right) optimal strategy $\tau$ for player Min 
at $\lambda_{k-1}$ and compute 
$\lambda_k=\min\{\lambda \in\Rmax \mid \spectral_{\tau}(\lambda)\geq 0\}$. 

{\bf Stop.} Verify $\spectral(\lambda_k)=0$ or $\lambda_k=+\infty$. 
If true, return $\lambda_{k}$.
\end{myalgorithm}

\begin{remark}\label{r:justopt}
Note that in Algorithm~\ref{a:neg-newton} we can use
optimal strategies instead of right optimal ones, 
because if $\tau$ is optimal at $\lambda_{k-1}$, 
we have $\spectral_{\tau}(\lambda_{k-1})=\spectral(\lambda_{k-1})<0$  
and so $\lambda_{k}> \lambda_{k-1}$ by the definition of $\lambda_{k}$ 
(recall that by Theorem~\ref{t:philambda} 
all the spectral functions are nondecreasing and piecewise-linear). 
This means that all the strategies considered in the iterations 
of Algorithm~\ref{a:neg-newton} are different, 
and as the number of strategies is finite, 
this algorithm must terminate in a finite number of steps. 
A similar argument shows that we can also use optimal strategies 
in Algorithm~\ref{a:pos-newton} at points $\lambda_{k-1}$ 
where the spectral function 
$\spectral$ is strictly positive, 
because in that case we have $\lambda_{k} < \lambda_{k-1}$ 
even if $\sigma $ is just optimal and not left optimal 
(however, when $\spectral(\lambda_{k-1})=0$ and $\lambda_{k-1}$ 
is not optimal, only a left optimal strategy at $\lambda_{k-1}$ 
guarantees $\lambda_{k} < \lambda_{k-1}$). 
\end{remark}

\begin{remark}\label{r:newtstart}
Due to Corollary~\ref{c:philambda} part~\eqref{c:philambdaP2}, 
the values $\lambda^+:=2M(\min(m,n)+1)$ and
$\lambda^-:=-2M(\min(m,n)+1)$ can be first checked in the case 
of the positive and negative Newton methods, respectively. We recall that
$M$ is a bound on the absolute value of the coefficients in~\eqref{problem-cones}.

If $\spectral(\lambda^+)<0$ then the problem is infeasible,
and if $\spectral(\lambda^-)>0$ then the problem is unbounded.
If $\spectral(\lambda^+)\geq 0$
and $\spectral(\lambda^-)<0$, then the problem is both feasible and bounded. 
The case $\spectral(\lambda^-)=0$ requires a left optimal strategy for player 
Max at $\lambda^-$ to decide that either this point is optimal, 
or the problem is unbounded.

\if{
Then, we verify $\spectral(\lambda^+)\geq 0$ in the positive Newton method 
and $\spectral(\lambda^-)<0$ in the negative one. If true, 
the algorithm proceeds normally, and if not, 
the problem can be immediately solved by finding 
the intersection with zero of the linear piece of the 
spectral function $\spectral$ at $\lambda^++1$ or $\lambda^--1$. 
This can be done by computing optimal strategies at these points, 
which by Corollary~\ref{c:philambda} part~\eqref{c:philambdaP2} are 
right and left optimal, respectively.  
This remark also applies to the bisection method. 

Observe that part~\eqref{c:philambdaP2} of Corollary~\ref{c:philambda} also 
implies the positive (resp.\ negative) Newton method can be stopped if the 
value $-4M(\min(m,n)+1)^2$ (resp.\ $4M(\min(m,n)+1)^2$) is reached, 
because in that case the last possible linear piece 
of $\spectral(\lambda)$ is being considered. 
}\fi

This rule of starting with $\pm 2M(\min(m,n)+1)$, as we shall see, 
secures pseudo-polynomiality of the instances of the mean payoff games 
generated by the bisection and Newton methods. 
\end{remark}

The following logarithmic bound on the complexity of the 
bisection method is standard and its proof will be omitted.

\begin{proposition}\label{termination-bis}
If the finite coefficients in~\eqref{problem-cones} are integers 
with absolute values bounded by $M$,
then the number of iterations of the bisection method does not exceed 
$\log(4M(\min(m,n)+1))$ if it is started as in Remark~\ref{r:newtstart}. 
Hence, the computational complexity of the bisection method in this case 
does not exceed 
\[
\log(4M(\min(m,n)+1))\times\MPGinteger(m+1,n+1,M+2M(\min(m,n)+1))\; .
\]
\end{proposition}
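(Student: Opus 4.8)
The plan is to localize the optimum to a bounded integer interval, run the textbook halving estimate for bisection on that interval, and then bound the work per iteration by a single mean payoff game query. The bisection loop of Algorithm~\ref{a:bisection} is entered only in the situation described in Remark~\ref{r:newtstart}, i.e.\ when $\spectral(\lambda^+)\geq 0$ and $\spectral(\lambda^-)<0$ for $\lambda^{\pm}:=\pm 2M(\min(m,n)+1)$ (the three remaining cases $\spectral(\lambda^+)<0$, $\spectral(\lambda^-)>0$, $\spectral(\lambda^-)=0$ being settled there without any bisection step). In that situation the optimal value $\lambda^\ast$ is finite, so by Corollary~\ref{c:philambda}\eqref{c:philambdaP2} it is an \emph{integer} lying in $[\lambda^-,\lambda^+]$, while by Theorem~\ref{t:philambda}\eqref{t:philambdaP1} the function $\spectral$ is nondecreasing. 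Taking $\overline{\lambda}_0:=\lambda^+$, $\underline{\lambda}_0:=\lambda^-$ and arguing by induction, each iteration replaces $[\underline{\lambda}_{k-1},\overline{\lambda}_{k-1}]$ by one of the two halves cut at the integer $\lambda=\lceil(\overline{\lambda}_{k-1}+\underline{\lambda}_{k-1})/2\rceil$, and monotonicity of $\spectral$ shows the retained half still satisfies $\underline{\lambda}_k<\lambda^\ast\leq\overline{\lambda}_k$; in particular all iterates stay in $[\lambda^-,\lambda^+]$ and remain integers.

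Next I would count the iterations. Since the endpoints are integers and the midpoint is rounded up, the bracket length satisfies $\overline{\lambda}_k-\underline{\lambda}_k\leq\lceil(\overline{\lambda}_{k-1}-\underline{\lambda}_{k-1})/2\rceil$, hence $\overline{\lambda}_k-\underline{\lambda}_k\leq\lceil L_0/2^k\rceil$ with $L_0:=\overline{\lambda}_0-\underline{\lambda}_0=4M(\min(m,n)+1)$. This drops to $1$ as soon as $2^k\geq L_0$, i.e.\ after at most $\lceil\log_2(4M(\min(m,n)+1))\rceil$ iterations, at which point the stopping test $\overline{\lambda}_k-\underline{\lambda}_k=1$ fires; since $\lambda^\ast$ is then the unique integer in $(\underline{\lambda}_k,\overline{\lambda}_k]$, namely $\overline{\lambda}_k$, the value returned is correct. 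This gives the claimed bound on the number of iterations.

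Finally I would bound the cost of one iteration, which consists in deciding whether $\spectral(\lambda)\geq 0$ for the current integer $\lambda$. By Proposition-Definition~\ref{pd-specf} together with Theorem~\ref{chi-axbx}, this is equivalent to deciding whether $n+1$ is a winning node of the mean payoff game with payment matrices $U$ and $V(\lambda)$, a game with $m+1$ nodes of Max and $n+1$ nodes of Min whose finite payments are, by~\eqref{UVnotat}, of the form $a$ or $a+\lambda$ with $|a|\leq M$; using the invariant $|\lambda|\leq 2M(\min(m,n)+1)$ from the first step, these payments are integers of absolute value at most $M+2M(\min(m,n)+1)$. Hence one iteration costs $\MPGinteger(m+1,n+1,M+2M(\min(m,n)+1))$ operations, and multiplying by the iteration count yields the stated complexity. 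The one point that deserves care — the rest being the textbook bisection estimate — is precisely keeping track of the invariant that the iterates never leave $[\lambda^-,\lambda^+]$, since it is this (rather than the weaker bound $4M(\min(m,n)+1)^2$ of Corollary~\ref{c:philambda}\eqref{c:philambdaP1}) that keeps the payments of every oracle call polynomially bounded and thus secures pseudo-polynomiality.
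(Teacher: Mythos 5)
Your proof is correct and is precisely the standard halving argument the paper has in mind when it says the bound ``is standard and its proof will be omitted'': localize the integer optimum $\lambda^\ast$ to the bracket $[\lambda^-,\lambda^+]$ of length $4M(\min(m,n)+1)$ via Remark~\ref{r:newtstart} and Corollary~\ref{c:philambda}, halve while preserving the sign invariant, and charge each iteration one call to a mean payoff oracle whose integer payments are bounded by $M+2M(\min(m,n)+1)$. The only (harmless) discrepancy is that the rounding analysis yields $\lceil\log_2(4M(\min(m,n)+1))\rceil$ iterations rather than the unrounded $\log(\cdot)$ in the statement, which merely reflects the paper's own looseness.
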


\begin{remark}\label{r:bisbounds}
Butkovi\v{c} and Aminu~\cite{BA-08} give better initial 
values $\overline{\lambda}_0$ and $\underline{\lambda}_0$ 
for the bisection method than $\pm 2M(\min(m,n)+1)$,  
but only for the special cases~\eqref{problem-straight}, 
where all the coefficients are assumed to be finite. 
These initial values depend on the input data and lie in the interval $[-3M,3M]$. 
As oracle, they exploit the alternating method of~\cite{CGB-03}, 
which requires $O(mn(m+n)M)$ operations, 
being related to the value iteration of~\cite{ZP-96}. 
Hence, in this case, the complexity of the bisection method
is no more than $O(mn(m+n)M \log M)$. In~\cite{Ser-lastdep}, 
the same kind of initial values were obtained for the general
formulation~\eqref{mainproblem},
leading to a similar complexity, 
but with the same finiteness restriction on the coefficients. 
The initial values of~\cite{BA-08} and~\cite{Ser-lastdep}  
will be exploited in the numerical experiments, 
see Subsection~\ref{ss:NumExp}.
\end{remark}

As observed above, in the case of the bisection method the mean payoff oracle 
is only required to check whether $\spectral(\lambda)\geq 0$. 

In the case when the finite coefficients in~\eqref{problem-cones} are real, 
the bisection method computes 
$(\overline{\lambda}_{k-1}+\underline{\lambda}_{k-1})/2$
without rounding and it yields only an approximate solution to the problem. 
However, Newton methods always converge in a finite number of steps.

\begin{proposition}\label{termination-newt}
Denote by $\stratmax$ and $\stratmin$ the number 
of available strategies for players Max and Min, 
respectively. Then, 
\begin{enumerate}[(i)]
\item\label{termination-newtP1} 
Algorithms~\ref{a:pos-newton} and~\ref{a:neg-newton} 
terminate in a finite number of steps, 
the number of which does not exceed 
$\stratmax$ and $\stratmin$, respectively.
\item\label{termination-newtP2} 
If the finite coefficients in~\eqref{problem-cones} are integers 
with absolute values bounded by $M$, 
then the values $\lambda_k$ produced by Algorithm~\ref{a:pos-newton}  
are also integer, and the number of iterations 
does not exceed $4M(\min(m,n)+1)$ 
if it is started as in Remark~\ref{r:newtstart}.
\end{enumerate}
\end{proposition}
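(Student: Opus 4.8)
The plan is to prove the two parts separately, exploiting the duality formula~\eqref{sl-duality} and the piecewise-linear structure from Theorem~\ref{t:philambda} throughout. For part~\eqref{termination-newtP1}, the key observation is that the sequence $(\lambda_k)$ produced by either algorithm is strictly monotone until termination, so that the associated sequence of left optimal (resp.\ right optimal) strategies is injective, which forces termination after at most $\stratmax$ (resp.\ $\stratmin$) steps. Concretely, for Algorithm~\ref{a:pos-newton}: at iteration $k$ we pick a left optimal strategy $\sigma$ for Max at $\lambda_{k-1}$, so $\spectral(\mu)=\spectral^{\sigma}(\mu)$ on $[\lambda_{k-1}-\epsilon,\lambda_{k-1}]$ for some $\epsilon>0$, and we set $\lambda_k=\min\{\lambda\mid\spectral^{\sigma}(\lambda)\geq 0\}$. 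First I would argue $\lambda_k\leq\lambda_{k-1}$: indeed $\spectral^{\sigma}(\lambda_{k-1})\leq\spectral(\lambda_{k-1})$ is wrong in direction, so instead use that $\spectral^{\sigma}\le\spectral$ by~\eqref{sl-duality}, hence $\spectral^\sigma(\lambda_{k-1}) \le \spectral(\lambda_{k-1})$; combined with $\spectral(\lambda_{k-1})\ge 0$ this does not immediately give $\spectral^\sigma(\lambda_{k-1})\ge 0$, so one must instead observe that by left optimality $\spectral^{\sigma}(\lambda_{k-1})=\spectral(\lambda_{k-1})\ge 0$, whence $\lambda_k\le\lambda_{k-1}$. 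If $\lambda_k=\lambda_{k-1}$ the algorithm stops; otherwise $\lambda_k<\lambda_{k-1}$. Then I would show that in the next iteration the newly chosen left optimal strategy $\sigma'$ at $\lambda_k$ cannot equal $\sigma$: if it did, $\lambda_{k+1}=\min\{\lambda\mid\spectral^{\sigma}(\lambda)\ge 0\}=\lambda_k$ and the algorithm would already have stopped at step $k$. This injectivity gives the bound $\stratmax$; the argument for Algorithm~\ref{a:neg-newton} is symmetric, using Remark~\ref{r:justopt} (strategies are automatically distinct because $\lambda_k>\lambda_{k-1}$ there), yielding the bound $\stratmin$.

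For part~\eqref{termination-newtP2}, I would first establish integrality of the iterates by induction. Starting from $\lambda_0$ chosen as in Remark~\ref{r:newtstart}, which is an integer ($\lambda^+=2M(\min(m,n)+1)$), suppose $\lambda_{k-1}\in\Z$. The next iterate $\lambda_k=\min\{\lambda\mid\spectral^{\sigma}(\lambda)\ge 0\}$ is, by Corollary~\ref{c:philambda} part~\eqref{c:philambdaP2} applied to the one-player spectral function $\spectral^{\sigma}$ (whose defining game still has integer coefficients bounded by $M$), an integer whenever it is finite — and when it is $-\infty$ the algorithm stops. Hence all $\lambda_k$ are integers. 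The bound on the number of iterations then follows from combining strict monotonicity (from part~\eqref{termination-newtP1}) with the localization interval: by Corollary~\ref{c:philambda} part~\eqref{c:philambdaP2} every iterate lies in $[-2M(\min(m,n)+1),2M(\min(m,n)+1)]$ (the start $\lambda_0=\lambda^+$ is the right endpoint, and subsequent iterates only decrease and stay finite until the last one), so the strictly decreasing integer sequence $(\lambda_k)$ has at most $4M(\min(m,n)+1)+1$ terms, i.e.\ at most $4M(\min(m,n)+1)$ iterations.

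The main obstacle I anticipate is the careful bookkeeping around left optimality versus plain optimality in part~\eqref{termination-newtP1}: one must be precise that the \emph{left} optimal choice is exactly what guarantees $\spectral^{\sigma}(\lambda_{k-1})=\spectral(\lambda_{k-1})$ and that $\lambda_k<\lambda_{k-1}$ strictly unless the stopping condition triggers, since a merely optimal (but not left optimal) strategy at a point where $\spectral$ vanishes could satisfy $\lambda_k=\lambda_{k-1}$ without optimality of $\lambda_{k-1}$ — this is precisely the subtlety flagged in Remark~\ref{r:justopt}. Once the strict-decrease-or-stop dichotomy is nailed down, the injectivity of the strategy sequence and hence finite termination are immediate; and for the pseudo-polynomial bound, everything reduces to quoting the integrality and localization already secured in Corollary~\ref{c:philambda}. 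I would also remark that the stopping test $\lambda_k=\lambda_{k-1}$ (resp.\ $\spectral(\lambda_k)=0$) is correctly detecting optimality, but since the statement only claims termination in a bounded number of steps, a full correctness proof of the output can be deferred to a separate statement.
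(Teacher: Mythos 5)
Your proposal is correct and follows essentially the same route as the paper: part~(i) rests on the observation that the iterates $\lambda_k$ are strictly monotone until the stopping test fires, so the strategies chosen at distinct iterations must all differ (each strategy determines its $\lambda_k$ uniquely), bounding the number of iterations by the number of strategies; part~(ii) combines the integrality and localization statements of Corollary~\ref{c:philambda} part~(ii) with this monotonicity. If anything, your write-up is more explicit than the paper's (which simply asserts that the $\lambda_k$ are pairwise distinct), in particular in pinning down that left optimality is what yields $\spectral^{\sigma}(\lambda_{k-1})=\spectral(\lambda_{k-1})\geq 0$ and hence $\lambda_k\leq\lambda_{k-1}$.
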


\begin{proof}
\eqref{termination-newtP1} 
At different iterations of Algorithm~\ref{a:pos-newton} 
we have different strategies, because 
$\lambda_k=\min\{\lambda \in\Rmax \mid \spectral^{\sigma}(\lambda)\geq 0\}$ 
are different for all $k$.
Similarly for Algorithm~\ref{a:neg-newton}, 
with $\tau$ instead of $\sigma$. Thus, 
the number of steps is limited by the number of strategies, which is finite. 

\eqref{termination-newtP2} 
The numbers $\lambda_k$ generated by Algorithm~\ref{a:pos-newton} are 
integers due to Corollary~\ref{c:philambda} part~\eqref{c:philambdaP2}. 
By Remark~\ref{r:newtstart}, we can start the algorithm at 
$2M(\min(m,n)+1)$, and it will finish before it reaches $-2M(\min(m,n)+1)$.
\end{proof}

A worst-case complexity bound, different from the number of strategies, 
will be given below in Theorem~\ref{posnewt-comp},  
and it is worse than that of 
the bisection method above (see also Remark~\ref{r:bisbounds}). 
First, Newton iterations require more sophisticated oracles
which compute the value of the game and a left optimal strategy. 
Second, we have only used
the integrality of the method in Proposition~\ref{termination-newt}, 
so the bound on the number of iterations is rough. However, 
the positive Newton method is an interesting alternative to 
the bisection method, since it preserves feasibility. 
Therefore, it may be more sensitive to the geometry of the feasible set, 
which is especially convenient if this set  
has only few generators or its dimension is small. 
The experiments of Subsection~\ref{ss:NumExp} 
indicate that this is indeed the case, 
and the worst-case complexity bound of Theorem~\ref{posnewt-comp}  
(using Proposition~\ref{termination-newt}) is often too pessimistic. 
The main reason to give the result of Theorem~\ref{posnewt-comp} 
is that it shows the method is pseudo-polynomial.
  
Not aiming to obtain a better overall worst-case complexity result, 
in the next subsection we will rather consider the implementation of
the positive Newton method,
reducing the computation of $\lambda_k$ to a (polynomial-time solvable) shortest path problem.  
Subsection~\ref{ss:left-optim} will be devoted to the 
computation of left optimal strategies in the integer case by means
of perturbed mean payoff games. As noticed in 
Proposition~\ref{termination-newt}, 
Newton iterations should work also in the case of real coefficients.
For this we propose the algebraic approach of Subsection~\ref{ss:germs}, 
encoding a perturbed game as a game over the semiring of germs.

\subsection{Newton iterations by means of Kleene star}\label{ss:kleene}

In this subsection we show that in the case of
Algorithm~\ref{a:pos-newton} the steps of Newton method 
can be performed by calculating least solutions of inequalities of the form
$z \geq E z \vee h$, as in Proposition~\ref{bellman}.

Assume that we are at iteration $k$ of Algorithm~\ref{a:pos-newton}, 
so that we need to compute 
$\lambda_k=\min\{\lambda\in\Rmax\mid \spectral^{\sigma}(\lambda)\geq 0\}$, 
where $\sigma$ is a left optimal strategy for player Max at $\lambda_{k-1}$. 
If we set $V^{\sigma}(\lambda)$ instead of $V(\lambda)$ in $Uy\leq
V(\lambda)y$, by Theorem~\ref{chi-axbx} 
the minimal zero $\lambda_k$ of $\spectral^{\sigma}(\lambda)$ 
is exactly the least value of $\lambda$ for which this system 
is satisfied by some $y\in \Rmax^{n+1}$ with $y_{n+1}\neq -\infty $,  
i.e.\ we have 
\begin{equation}\label{DefLambdaK}
\lambda_k=\min\{ \lambda\in\Rmax\mid Uy\leq V^{\sigma}(\lambda)y\;,\; y_{n+1}\neq -\infty \; 
\makebox{ is solvable}\} \; .
\end{equation}
The main idea is to compute this minimal zero by 
considering the system $Uy\leq V^{\sigma}(\lambda)y$ directly. 
With this aim, we shall need the following observation.  

\begin{lemma}\label{problems-equiv2}
Assume that at iteration $k$ of Algorithm~\ref{a:pos-newton} 
we have $l:=\sigma(m+1)\neq n+1$, 
where $\sigma$ is a left optimal strategy for player Max at $\lambda_{k-1}$. 
Then, if   
\begin{equation}\label{ConditionEq}
\left( Uy\leq V^{\sigma}(\lambda)y\makebox{ and } y_{n+1}\neq -\infty \right) 
\implies y_l\neq -\infty \; ,
\end{equation} 
for all $\lambda$, it follows that   
\begin{equation}\label{EqDefLambdaK}
\lambda_k=\min\{ \lambda\in\Rmax\mid Uy\leq V^{\sigma}(\lambda)y\;,\; y_{l}\neq -\infty \; 
\makebox{ is solvable}\} \; .
\end{equation}
Otherwise, i.e.\ if Condition~\eqref{ConditionEq} does not hold, 
$\lambda_k=-\infty$. 
\end{lemma}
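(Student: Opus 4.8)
The plan is to exploit that, since $l=\sigma(m+1)\neq n+1$, the parameter $\lambda$ enters the system $Uy\leq V^{\sigma}(\lambda)y$ only through its $(m+1)$-st inequality, which reads $uy\leq \lambda+q_l+y_l$; here $q_l$ is finite because $\sigma$ is an admissible positional strategy for player Max at the finite point $\lambda_{k-1}$. This gives the key observation: \emph{if $y$ solves $Uy\leq V^{\sigma}(\lambda)y$ and $y_l=-\infty$, then the $(m+1)$-st inequality forces $uy=-\infty$, so it reduces to $-\infty\leq-\infty$ and $y$ solves $Uy\leq V^{\sigma}(\mu)y$ for \emph{every} $\mu\in\Rmax$} (the other inequalities do not involve $\mu$).

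I would first dispose of the ``otherwise'' case. If Condition~\eqref{ConditionEq} fails, fix a witness $\bar y$ solving $U\bar y\leq V^{\sigma}(\bar\lambda)\bar y$ with $\bar y_{n+1}\neq-\infty$ and $\bar y_l=-\infty$. By the observation $\bar y$ solves $Uy\leq V^{\sigma}(\mu)y$ for all $\mu$, so Theorem~\ref{chi-axbx} yields $\spectral^{\sigma}(\mu)=\chi_{n+1}(U^{\sharp}V^{\sigma}(\mu))\geq 0$ for all $\mu\in\Rmax$, whence $\lambda_k=-\infty$.

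Now assume Condition~\eqref{ConditionEq} holds and let $\lambda_k'$ be the right-hand side of~\eqref{EqDefLambdaK}. The inequality $\lambda_k'\leq\lambda_k$ is immediate from~\eqref{ConditionEq}: every $y$ certifying a given $\lambda$ in~\eqref{DefLambdaK} has $y_l\neq-\infty$, so the set of admissible parameters in~\eqref{DefLambdaK} is contained in that of~\eqref{EqDefLambdaK}. For $\lambda_k\leq\lambda_k'$, note first that since we are at iteration $k$ of Algorithm~\ref{a:pos-newton} and $\sigma$ is left optimal at $\lambda_{k-1}$, we have $\spectral^{\sigma}(\lambda_{k-1})=\spectral(\lambda_{k-1})\geq 0$ (recall $\spectral(\lambda_{k-1})\geq 0$ holds throughout Algorithm~\ref{a:pos-newton}), hence $\lambda_k<+\infty$; if $\lambda_k=-\infty$ there is nothing to prove. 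Otherwise $\lambda_k\in\R$, and by Theorem~\ref{t:philambda} together with~\eqref{phi-Phi}, $\spectral^{\sigma}$ is nondecreasing and piecewise-linear, and $\spectral^{\sigma}(\mu)$ equals the least mean weight per turn among the elementary cycles of $\Bipdig^{\sigma}_{\mu}$ accessible from node $n+1$ of Min. As $\spectral^{\sigma}(\lambda_k)=0$ while $\spectral^{\sigma}(\mu)<0$ for $\mu<\lambda_k$, there is an elementary cycle $C^{*}$ accessible from $n+1$ whose mean weight coincides with $\spectral^{\sigma}$ on a left neighbourhood of $\lambda_k$; being of the form $(\alpha+\beta\mu)/k$ with $\beta\in\{0,1\}$ (Theorem~\ref{t:philambda}), nondecreasing, negative just to the left of $\lambda_k$ and zero at $\lambda_k$, it must have $\beta=1$ and $\lambda_k=-\alpha$. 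Since its weight depends on $\mu$, $C^{*}$ traverses the unique $\mu$-dependent arc of $\Bipdig^{\sigma}_{\mu}$, namely the arc from node $m+1$ of Max to node $\sigma(m+1)=l$ of Min; hence $C^{*}$ passes through node $l$ and is accessible from $l$. Therefore, for $\mu<\lambda_k$ the mean weight of $C^{*}$ is negative, so $\chi_l(U^{\sharp}V^{\sigma}(\mu))<0$, and by Theorem~\ref{chi-axbx} the system $Uy\leq V^{\sigma}(\mu)y$ admits no solution with $y_l\neq-\infty$; thus $\lambda_k'\geq\lambda_k$, and combined with the previous inequality this gives~\eqref{EqDefLambdaK}.

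The step I expect to need the most care is the second half of the main case: justifying that the left slope of $\spectral^{\sigma}$ at $\lambda_k$ is positive and is realized by a genuine elementary cycle using the arc out of node $m+1$ (so that the ``$\beta\in\{0,1\}$'' structure of Theorem~\ref{t:philambda} forces the cycle to visit $l$), and making sure the degenerate possibility $\lambda_k=+\infty$ is genuinely excluded — this is precisely where one uses that $\sigma$ is left optimal at a feasible iterate $\lambda_{k-1}$, so $\spectral^{\sigma}(\lambda_{k-1})\geq 0$. The bookkeeping about which entries of $U$ and $V^{\sigma}(\lambda)$ may equal $-\infty$ (in particular the finiteness of $q_l$, and the conventions of~\eqref{infty-rules}) is routine.
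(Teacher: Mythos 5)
Your proof is correct. The treatment of the ``otherwise'' case and of the easy inequality (the feasible set of \eqref{DefLambdaK} is contained in that of \eqref{EqDefLambdaK} under Condition~\eqref{ConditionEq}) coincides with the paper's. Where you genuinely diverge is in the reverse inequality. The paper proves it constructively: given $\hat y$ with $U\hat y\leq V^{\sigma}(\lambda)\hat y$ and $\hat y_l\neq-\infty$, it takes a solution $\tilde y$ of $Uy\leq V^{\sigma}(\lambda_{k-1})y$ with $\tilde y_{n+1}\neq-\infty$ (available because $\spectral^{\sigma}(\lambda_{k-1})=\spectral(\lambda_{k-1})\geq 0$) and checks that the tropical combination $\overline y=\hat y\vee(\tilde y-\beta)$, for $\beta$ large enough, still solves the system and has $\overline y_{n+1}\neq-\infty$; the key points are that the first $m$ inequalities are $\lambda$-free and stable under tropical combinations, and that the last one survives because $v_l+\hat y_l$ is finite. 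You instead argue combinatorially: the linear piece of $\spectral^{\sigma}$ immediately to the left of its least zero must have a nonzero slope, hence is realized by an elementary cycle traversing the unique $\lambda$-dependent arc $(m+1,l)$ of $\Bipdig^{\sigma}_{\mu}$; that cycle passes through node $l$ and has negative mean for $\mu<\lambda_k$, so $\chi_l(U^{\sharp}V^{\sigma}(\mu))<0$ and Theorem~\ref{chi-axbx} rules out solutions with $y_l\neq-\infty$. Both routes are sound; the paper's yields an explicit feasible witness and needs nothing beyond Theorem~\ref{chi-axbx}, while yours is closer in spirit to the certificate arguments of Theorems~\ref{1st-cert} and~\ref{2nd-cert} and makes transparent why node $l$ is the right coordinate to pin down. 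Two cosmetic points: what you call $q_l$ is $v_l$ (they agree since $l\in[n]$), and the value $\mu=-\infty$ should be excluded from the feasible set of \eqref{EqDefLambdaK} by a direct one-line argument rather than via Theorem~\ref{chi-axbx} (the $(m+1)$-st row of $V^{\sigma}(-\infty)$ is identically $-\infty$, so Assumption~1 fails there) --- a detail the paper's own proof also leaves implicit.
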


\begin{proof}
Condition~\eqref{ConditionEq} implies that the minimum 
in~\eqref{EqDefLambdaK} is less than or equal to that in~\eqref{DefLambdaK}.

To show the converse, suppose that for some $\lambda\in \R$  
there exists $\hat{y}\in \Rmax^{n+1}$ such that 
$\hat{y}_l\neq -\infty$ and $U\hat{y}\leq V^{\sigma}(\lambda)\hat{y}$. 
Since $\spectral^{\sigma}(\lambda_{k-1})=\spectral(\lambda_{k-1})\geq 0$, 
by Theorem~\ref{chi-axbx} there exists a solution $\tilde{y}$ of 
$Uy\leq V^{\sigma}(\lambda_{k-1})y$  
(and so, in particular, of the first $m$ inequalities of this system, 
i.e.\ $C y\leq D^{\sigma} y$)   
such that $\tilde{y}_{n+1}\neq -\infty$. Then, 
for any $\beta \in \R$ the combination 
$\overline{y}=\hat{y}\vee \Tilde{y}-\beta$ satisfies 
the first $m$ inequalities in $Uy\leq V^{\sigma} (\lambda)y$ 
(in other words, we have $C \overline{y}\leq D^{\sigma} \overline{y}$) 
as a tropical linear combination of 
solutions of this system of tropically linear inequalities. 
Moreover, if $\beta$ is sufficiently large, 
$\overline{y}$ also satisfies the last inequality
$uy\leq \lambda+v^{\sigma}y$ of the system 
$Uy\leq V^{\sigma} (\lambda)y$ because 
$u\hat{y}\leq \lambda + v^{\sigma} \hat{y}$ and 
$v^{\sigma} \hat{y}=v_l+\hat{y}_l > -\infty$. 
But then $\overline{y}$ satisfies 
$U\overline{y}\leq V^{\sigma}(\lambda)\overline{y}$ 
and $\overline{y}_{n+1}\geq \tilde{y}_{n+1}-\beta \neq -\infty$.
This shows that the minimum in~\eqref{DefLambdaK} is less than or
equal to that in~\eqref{EqDefLambdaK}.

Finally, if Condition~\eqref{ConditionEq} does not hold, 
for some $\bar{\lambda}$ there exists a solution $\bar{y}$ of the system
$Uy\leq V^{\sigma}(\bar{\lambda})y$ such that  
$\bar{y}_{n+1}\neq -\infty$ but $\bar{y}_l= -\infty$. Since 
$u\bar{y}\leq \bar{\lambda }+ v^{\sigma} \bar{y}=\bar{\lambda }+v_l+\bar{y}_l$, 
this can only happen if $u\bar{y}=-\infty$, 
which implies that $\bar{y}$ satisfies $U\bar{y}\leq V^{\sigma}(\lambda)\bar{y}$ 
for any $\lambda \in \R$, and so $\lambda_k=-\infty$. 
\end{proof}

We next show how to make sure that Condition~\eqref{ConditionEq} is
satisfied. Note that this condition is not satisfied if, and only if, 
for some $\lambda $ the system $Uy\leq V^{\sigma}(\lambda)y$ 
has a solution $\bar{y}$ with 
$\bar{y}_{n+1}\neq -\infty$ but $\bar{y}_l=-\infty$. The latter 
implies the existence of a solution $\bar{y}$ of 
$C y\leq D y$ such that $\bar{y}_i=-\infty$ for all 
$i\in\supp(u)$, but $\bar{y}_{n+1}\neq -\infty$ 
(so in particular Condition~\eqref{ConditionEq} is
satisfied if $n+1\in \supp(u)$). Eliminating from the system 
$C y\leq D y$ the columns corresponding to the indices in $\supp(u)$, 
the existence of such a solution is reduced to the solvability of a 
two-sided homogeneous system with the condition $y_{n+1}\neq -\infty$, 
which can be decided using a mean payoff game oracle. 
If this problem has no solution, 
then Condition~\eqref{ConditionEq} is satisfied. Otherwise, 
the value of the original tropical linear-fractional programming problem is $-\infty$. 
 
As a consequence of the previous discussion, in what follows we assume
that it has already been checked that Condition~\eqref{ConditionEq} is satisfied, 
and we explain how to perform Newton iterations in that case.  

Suppose that we are at iteration $k$ of Algorithm~\ref{a:pos-newton}, 
and let $\sigma$ be a left optimal strategy at $\lambda_{k-1}$. 
Then, if we set $l:=\sigma(m+1)$, by Lemma~\ref{problems-equiv2} we have   
\begin{eqnarray*}
\lambda_k = \min\{ \lambda \in\Rmax \mid Uy\leq V^{\sigma}(\lambda)y\;,\; y_l\neq -\infty \; 
\makebox{ is solvable}\} \; .
\end{eqnarray*} 
Since the system $Uy\leq V^{\sigma}(\lambda)y$ 
is satisfied by some $\bar{y}$ with $\bar{y}_l\neq -\infty$ if, 
and only if, it is satisfied by some $\hat{y}$ with $\hat{y}_l=0$ 
(it is enough to define $\hat{y}_i=\bar{y}_i-\bar{y}_l$ for all $i\in [n+1]$), 
it follows that 
\begin{eqnarray*}
\lambda_k&=&\min\{ \lambda \in\Rmax \mid Uy\leq V^{\sigma}(\lambda)y\;,\; y_l= 0 \; 
\makebox{ is solvable}\} \\ 
&=& \min \{ \lambda \in\Rmax \mid Cy\leq D^{\sigma}y \; ,\; 
uy\leq \lambda + v_l + y_l \; , \; y_l=0 \; \makebox{ is solvable}\} 
\; .
\end{eqnarray*}
Thus, setting $y_l=0$  
we are in the situation of problem~\eqref{problem-straight},  
because $\lambda_k+v_l$ is given by:
\begin{equation}\label{problem-sigma}
\begin{split}
&\makebox{minimize }\quad  p x\vee r \\
&\makebox{subject to:}\quad Ax\vee c \leq B^{\sigma}x\vee d^{\sigma}
  \; , \; x\in\Rmax^n 
\end{split}
\end{equation}
where $p\in\Rmax^n$, $r\in\Rmax$, $c,d^{\sigma}\in\Rmax^m$
and $A,B^{\sigma}\in\Rmax^{m\times n}$ are such that
\[
U=
\begin{pmatrix}
A & c\\
p & r
\end{pmatrix}\quad \makebox{and}\quad
V^{\sigma}(\lambda)=
\begin{pmatrix}
B^{\sigma} & d^{\sigma}\\
-\infty  &  \lambda+v_l
\end{pmatrix} \enspace .
\]
Here, just for the simplicity of the presentation, 
column $l$ is in the place of column $n+1$ when $l\neq n+1$ 
(in other words, the columns of $A$ and $B^{\sigma}$ are respectively 
the columns of $C$ and $D^{\sigma}$ with exception of column $l$, 
$c$ is column $l$ of $C$, $d^{\sigma}$ is column $l$ of  $D^{\sigma}$, 
$r=u_l$ and $p_i=u_i$ for $i\neq l$).

We claim that the system of constraints (the second line) 
in~\eqref{problem-sigma} has a least solution, 
which then minimizes $px\vee r$, and we explain how to find it.
First note that the constraints in~\eqref{problem-sigma} 
can be written as:
\begin{equation}\label{two-subsystems}
\begin{split}
(Ax)_i\vee c_i&\leq b_{i\sigma(i)}+x_{\sigma(i)}\; ,\quad 
\makebox{if} \; \sigma (i) \neq l \;  ,\\
(Ax)_i\vee c_i&\leq d_i\enspace , \quad \makebox{if} \; \sigma (i)= l\; .
\end{split}
\end{equation}

In order to find the least solution of this system, 
observe that the second subsystem can be dispensed with. 
Indeed, since 
$\spectral^{\sigma}(\lambda_{k-1})=\spectral(\lambda_{k-1})\geq 0$, 
by Theorem~\ref{chi-axbx} there exists a solution $\tilde{y}$ of 
$Uy\leq V^{\sigma}(\lambda_{k-1})y$    
such that $\tilde{y}_{n+1}\neq -\infty$, 
and so this solution also satisfies $\tilde{y}_{l}\neq -\infty$ 
(recall we assume that 
Condition~\eqref{ConditionEq} holds). Then, 
if $\tilde{x}\in \Rmax^n$ is the vector defined by
$\tilde{x}_i:=\tilde{y}_i-\tilde{y}_l$ for $i\neq l$, 
it follows that $\tilde{x}$ is a solution of~\eqref{two-subsystems}. 
Hence, if the first subsystem has the least solution $\underline{x}$, 
we have $\underline{x}\leq \tilde{x}$ and so $\underline{x}$ 
is also a solution of the second subsystem.

To show that the first subsystem in~\eqref{two-subsystems} has 
a least solution, first note that a system of two inequalities of the form 
\[
\begin{split}
r_{11}+x_1\vee \cdots \vee r_{1n}+x_n&\leq x_1 \\
r_{21}+x_1\vee \cdots \vee r_{2n}+x_n&\leq x_1
\end{split}
\]
is equivalent to just one inequality:
\[
s_1+x_1\vee \cdots \vee s_n+x_n\leq x_1 \; ,
\]
where $s_i=r_{1i}\vee r_{2i}$ for $i=1,\ldots, n$. 
Using this kind of reduction, the first subsystem can be
transformed in no more than $m(n+1)$ operations to an equivalent
system of the form
\begin{equation}\label{eq-sys}
Ex_I\vee Fx_J\vee h\leq x_I\enspace ,
\end{equation}
where $x_I$ is the sub-vector whose coordinates appear
on the right-hand side of the first subsystem in~\eqref{two-subsystems},
and $x_J$ is the sub-vector corresponding to the rest of the 
coordinates which are present in that system. 
Since we are interested in the least solution, 
we can set $x_J\equiv -\infty $, 
and then the remaining system is just of the form
\begin{equation}\label{kleene-ineq}
E z\vee h\leq z\enspace ,
\end{equation}
where $z=x_I$. By Proposition~\ref{bellman},
the least solution to this system in $\RRbar^{|I|}$ is given by
\[
\underline{z}=E^*h=h\vee Eh\vee E^2h\vee E^3h\vee\cdots
\]
As $\tilde{x}_I$ satisfies~\eqref{kleene-ineq}, 
we have $\underline{z}\leq \tilde{x}_I$ and so $\underline{z}\in\Rmax^{|I|}$.

Thus, we have the following method:

\begin{myalgorithm}\label{a:problem-sigma}
Solving~\eqref{problem-sigma}

{\bf Step 1.} Split the system $Ax\vee c\leq B^{\sigma}x\vee d^{\sigma}$
in two subsystems as in~\eqref{two-subsystems} 
and transform the first subsystem to the form~\eqref{eq-sys}. 

{\bf Step 2.} Compute $\underline{z}=E^*h$.
Set $\underline{x}_I=\underline{z}$ and $\underline{x}_J\equiv -\infty$. 

{\bf Step 3.} Return $p\underline{x}\vee r$. 
\end{myalgorithm}

We also conclude the following. 

\begin{proposition}\label{prop-newnice2}
The problems 
$\min\{\lambda \mid \spectral^{\sigma}(\lambda)\geq 0\}$
can be solved in $O(mn)+O(n^3)$ time.  
\end{proposition}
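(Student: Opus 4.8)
The plan is to account for the cost of the three steps of Algorithm~\ref{a:problem-sigma} separately, and to observe that the total is dominated by the transformation to Kleene-star form plus the evaluation of $E^*h$. First I would recall that, by Lemma~\ref{problems-equiv2} and the reduction carried out above, computing $\min\{\lambda\mid\spectral^{\sigma}(\lambda)\geq 0\}$ amounts (after the value $v_l$ has been subtracted) to finding the least solution of the first subsystem in~\eqref{two-subsystems} and then evaluating $p\underline{x}\vee r$. The preprocessing in Step~1, which collapses several inequalities sharing the same left-hand variable $x_i$ into a single one by taking coordinatewise maxima of the coefficient rows, touches each of the at most $m$ rows of the system and each of the at most $n+1$ columns a constant number of times, hence costs $O(mn)$ elementary (max-plus) operations; this produces the system~\eqref{eq-sys}, and setting $x_J\equiv-\infty$ leaves~\eqref{kleene-ineq} with $|I|\le n$ unknowns.

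Next I would invoke Proposition~\ref{bellman}: the least solution of $Ez\vee h\le z$ is $\underline z=E^{*}h$, and by Remark~\ref{r:shortestpaths} the vector $E^{*}h$ can be computed by a single-destination shortest-path computation on the digraph $\digr(E)$ augmented with one extra node, which takes $O(|I|^{3})=O(n^{3})$ time (for instance by Bellman--Ford). One still has to check that this least solution lies in $\RRmax^{|I|}$ rather than having some coordinate equal to $-\infty$ in the wrong direction or $+\infty$: this is exactly the point already established in the paragraph preceding the algorithm, namely that $\tilde x_I$ (coming from the feasible $\tilde y$ guaranteed by $\spectral^{\sigma}(\lambda_{k-1})\ge 0$ via Theorem~\ref{chi-axbx}) satisfies~\eqref{kleene-ineq}, so $\underline z\le\tilde x_I$ and hence $\underline z\in\Rmax^{|I|}$; thus Step~2 costs $O(n^3)$. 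Finally Step~3, the evaluation of $p\underline x\vee r$ together with re-inserting $v_l$ to recover $\lambda_k$, is $O(n)$.

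Summing the three contributions gives $O(mn)+O(n^{3})+O(n)=O(mn)+O(n^{3})$, which is the claimed bound; since $m$ and $n$ play symmetric roles here only through the system size, no hidden dependence on the magnitude $M$ of the coefficients enters, so this subproblem is genuinely polynomial-time. The only slightly delicate point — the part I would be most careful about — is the justification that~\eqref{kleene-ineq} really has a \emph{finite} least solution and that discarding the second subsystem of~\eqref{two-subsystems} and the variables $x_J$ is legitimate; but this has already been handled in the discussion surrounding~\eqref{two-subsystems}, using the feasibility witness from Theorem~\ref{chi-axbx} and the monotonicity $\underline x\le\tilde x$, so in the proof I would simply cite that discussion together with Proposition~\ref{bellman} and Remark~\ref{r:shortestpaths}.
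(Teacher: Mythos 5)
Your proposal is correct and follows essentially the same route as the paper: the paper derives Proposition~\ref{prop-newnice2} directly from the preceding discussion, counting $O(mn)$ (at most $m(n+1)$ operations) for the reduction of the first subsystem of~\eqref{two-subsystems} to the form~\eqref{eq-sys}, and $O(n^3)$ for computing $\underline{z}=E^*h$ via Proposition~\ref{bellman} and the shortest-path interpretation of Remark~\ref{r:shortestpaths}. Your additional care about the finiteness of the least solution (via the feasible $\tilde{x}_I$ from Theorem~\ref{chi-axbx}) matches the justification the paper gives just before stating Algorithm~\ref{a:problem-sigma}.
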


Note that in general, a system of the form~\eqref{two-subsystems} 
is solvable in $\Rmax^n$ if, and only if, 
the least solution of the first subsystem belongs to $\Rmax^n$
and satisfies the second subsystem. 

\begin{example}\label{Example3}
Consider the following tropical linear-fractional programming problem: 
\[
\begin{split}
&\text{minimize } \quad \lambda  \\
&\text{subject to:}\quad   u y\leq \lambda + v y\; ,
\; C y\leq D y\; , \; y_{4}\neq -\infty \; , \; y\in\Rmax^{4} \; , 
\; \lambda \in\Rmax 
\end{split}
\]
where 
\[
C=
\left(\begin{array}{cccc}
-3      & -4      & -\infty & -\infty  \\
-1      & -\infty & -\infty & 1 \\
-\infty & -\infty & -\infty & 0\\
1       & -\infty & 0       & -\infty 
\end{array}\right) \; ,\quad 
D=
\left(\begin{array}{cccc}
-\infty & -\infty & -\infty & 0  \\
-\infty & 0       & -\infty & -\infty \\
0       & -\infty & -\infty & -\infty \\
0       & -\infty & -\infty  & 3 
\end{array}\right) \; ,
\]
$u=(-\infty,0,-\infty,-\infty)$ and 
$v=(3,-\infty,-\infty,-\infty)$, 
so in this case we have $m=4$ and $n=3$ 
with the notation of Problem~\eqref{problem-cones}.  

Before performing Newton iterations, we need to 
check whether the system $Cy\leq Dy$ has solutions with
$y_2=-\infty$ but $y_4\neq -\infty$ (since $\supp(u)=\{2\}$). 
By the second inequality of this system, it follows
that this is impossible. Hence, Condition~\eqref{ConditionEq} is satisfied 
and so we can use~\eqref{EqDefLambdaK} in order to compute $\lambda_k$. 
Moreover, in this example Newton method requires no more than two iterations, 
since player Max has only two strategies, 
which correspond to the two finite entries in the last row of $D$.

Assume that we start with $\lambda_0=0$. Then, 
an optimal strategy $\sigma$ for player Max at $\lambda_0$ is given by: 
$\sigma(1)= 4$, $\sigma(2)= 2$, $\sigma(3)= 1$, $\sigma(4)= 4$ and 
$\sigma(5)= 1$. 
Since $l=\sigma(m+1)=\sigma(5)=1$, we have 
\[
\lambda_1=\min\{ \lambda \in\Rmax \mid Cy\leq D^{\sigma}y\;,\; y_2=uy\leq \lambda+v_1+y_1\; ,\; y_1= 0 \; 
\makebox{ is solvable}\} \; , 
\]
and so, setting $y_1=0$, $\lambda_1+v_1=\lambda_1+3$ is given by:
\begin{equation}\label{problem-sigma-Example}
\begin{split}
&\makebox{minimize }\quad y_2 \\
&\makebox{subject to:}\; (y_2-4)\vee (-3) \leq y_4\; , \; 
(y_4+1)\vee (-1) \leq y_2 \; ,\; 
y_4\leq 0 \; ,\; 
y_3 \vee 1 \leq y_4+3 \;   .
\end{split}
\end{equation} 
Note that the system of constraints in~\eqref{problem-sigma-Example},  
obtained by setting $y_1=0$ in $C y\leq D^{\sigma} y$ 
(i.e., in~\eqref{two-subsystems} the first column plays the role of 
free term), reduces to 
\[
E \left(\begin{array}{cc}
y_2  \\
y_4  
\end{array}\right) 
\vee F y_3\vee h\leq 
\left(\begin{array}{cc}
y_2  \\
y_4  
\end{array}\right) 
\; ,\; 
y_4\leq 0 \; ,
\]
where
\[
E=
\left(\begin{array}{cc}
-\infty & 1  \\
-4       & -\infty  
\end{array}\right) \; , \;
F=
\left(\begin{array}{cc}
-\infty  \\
-3 
\end{array}\right) \; , \;
h=
\left(\begin{array}{cc}
-1  \\
-2  
\end{array}\right) \; .
\]
Since 
\[
E^* h=
\left(\begin{array}{cc}
\;0 & \;1\;  \\
-4 & \;0\;  
\end{array}\right) 
\left(\begin{array}{cc}
-1  \\
-2  
\end{array}\right) =
\left(\begin{array}{cc}
-1  \\
-2  
\end{array}\right) \; ,
\]
the least solution of the system of 
constraints in~\eqref{problem-sigma-Example} 
is $(y_2,y_3,y_4)^T=(-1,-\infty,-2)$. Therefore, 
the value of problem~\eqref{problem-sigma-Example} is 
$-1$, and thus $\lambda_1=-4$. 
It can be checked that this is the optimal solution of 
the tropical programming problem (and in particular, 
that $\sigma$ is still a left optimal strategy for player Max at 
$\lambda_1=-4$).    
\end{example} 

\subsection{Computing left optimal strategies}\label{ss:left-optim}

In order to compute left optimal strategies, 
consider the mean payoff game associated with 
the tropical linear-fractional programming problem~\eqref{problem-cones}, 
and let the weights $\lambda+v$ of the arcs connecting node $m+1$ of Max  
with nodes of Min be replaced by $\lambda-\epsilon+v$, where
$\epsilon\in \R$. 
In this way, we obtain a {\em perturbed mean payoff game}. 
For small enough $\epsilon>0$, the optimal strategies for this game are 
the left optimal strategies required by the positive Newton iterations.
Here, we will require the mean payoff oracle to find optimal strategies, 
not just the value of the game. 
The complexity of such oracle will be denoted by $\MPGstrong(m,n,M)$, 
for mean payoff games with integer payments whose absolute values 
are bounded by $M$,
with $m$ nodes of Max and $n$ nodes of Min. 
A pseudo-polynomial algorithm for computing 
optimal strategies of such games is described in~\cite{ZP-96}.

\begin{proposition}\label{p:germs3}
If the finite coefficients in~\eqref{problem-cones} are integers 
with absolute values bounded by $M$, then at each iteration of the positive 
Newton method, started and finished as in Remark~\ref{r:newtstart}, 
a left optimal strategy can be found in 
\[
\MPGstrong(m+1,n+1,(\min(m,n)+2)(M+2M(\min(m,n)+1))+1)
\]
operations. 
\end{proposition}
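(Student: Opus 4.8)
The plan is to produce the left optimal strategy as a genuine optimal strategy of a suitably scaled integer mean payoff game, and then bound the payments in that scaled game. First I would recall the reduction already announced before the statement: for $\epsilon>0$ small enough, the optimal strategies of the perturbed game, in which the weights $\lambda+v$ on the arcs from node $m+1$ of Max are replaced by $\lambda-\epsilon+v$, coincide with the left optimal strategies at $\lambda$. The delicate point is to quantify ``small enough''. By Theorem~\ref{t:philambda} and Corollary~\ref{c:philambda}, the breaking points of all spectral and partial spectral functions are rationals with denominators at most $\min(m,n)+1$, and all quantities of interest live in the window $[-2M(\min(m,n)+1),2M(\min(m,n)+1)]$. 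Hence the horizontal distance between $\lambda_{k-1}$ (which, by Proposition~\ref{termination-newt}\eqref{termination-newtP2}, is an integer) and the nearest breaking point of any $\spectral^\sigma$ to its left is at least $1/(\min(m,n)+1)$ whenever it is nonzero; so I would take $\epsilon=1/(\min(m,n)+1)$ (or any smaller positive rational with the same denominator). With this choice, evaluating the perturbed game at $\lambda_{k-1}$ lands strictly inside the linear piece of $\spectral^\sigma$ immediately to the left of $\lambda_{k-1}$, which is exactly what left optimality requires.

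Next I would pass to integer payments by clearing denominators. At iteration $k$ the oracle is run with $\lambda=\lambda_{k-1}\in\mathbb{Z}$, so the payments of the perturbed game are of two kinds: an integer $a$ with $|a|\le M$ on the ``static'' arcs, or a number $\lambda_{k-1}-\epsilon+v$ with $|v|\le M$ on the arcs leaving node $m+1$ of Max. Using $|\lambda_{k-1}|\le 2M(\min(m,n)+1)$ from Remark~\ref{r:newtstart}, every payment has absolute value at most $M+2M(\min(m,n)+1)+\epsilon$. Multiplying all payments by $\min(m,n)+2$ (a common multiple of $1$ and of the denominator $\min(m,n)+1$ of $\epsilon$) yields an equivalent game — multiplying all payments by a positive constant does not change the set of optimal strategies — whose payments are now integers bounded in absolute value by $(\min(m,n)+2)(M+2M(\min(m,n)+1))+ (\min(m,n)+2)\epsilon = (\min(m,n)+2)(M+2M(\min(m,n)+1))+1$, since $(\min(m,n)+2)\epsilon=(\min(m,n)+2)/(\min(m,n)+1)\le 2\le\ldots$; here one should note that $\epsilon$ can be chosen so that $(\min(m,n)+2)\epsilon$ is exactly $1$ by taking $\epsilon=1/(\min(m,n)+2)$ instead, which still has denominator at most $\min(m,n)+2$ and is still $\le 1/(\min(m,n)+1)$ is false — so more care is needed here, see below. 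In any case the bound $(\min(m,n)+2)(M+2M(\min(m,n)+1))+1$ on the integer payments is what is claimed, and the game has $m+1$ nodes of Max and $n+1$ nodes of Min, so an optimal strategy is found by the oracle in $\MPGstrong(m+1,n+1,(\min(m,n)+2)(M+2M(\min(m,n)+1))+1)$ operations, proving the proposition.

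I expect the main obstacle to be the bookkeeping in the previous paragraph: one must choose $\epsilon$ simultaneously small enough that scaling by $\epsilon$ does not skip past the left breaking point of $\spectral^\sigma$ at $\lambda_{k-1}$ (which forces $\epsilon<1/(\min(m,n)+1)$ in the worst case, because consecutive breaking points can be that close), and with a denominator small enough that after clearing it the $+1$ term in the payment bound is respected. The clean way to reconcile these is to take $\epsilon$ of the form $1/N$ with $N=\min(m,n)+2$: then $\epsilon=1/(\min(m,n)+2)<1/(\min(m,n)+1)$, the common multiple to clear is $N$ itself (it already absorbs the denominator $\min(m,n)+1$ of the breaking points after one further multiplication — or one works with $N(\min(m,n)+1)$ and adjusts $M$ accordingly, which only changes constants), and $N\epsilon=1$ exactly, giving the stated additive $+1$. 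The only genuinely substantive input beyond this arithmetic is the fact that optimal strategies of the $\epsilon$-perturbed game are left optimal at $\lambda_{k-1}$, which follows from Theorem~\ref{t:philambda} (piecewise linearity, nondecreasing), from~\eqref{sl-duality}, and from the separation of breaking points in Corollary~\ref{c:philambda}\eqref{c:philambdaP3}: between $\lambda_{k-1}$ and the first breaking point of $\spectral$ (or of the relevant $\spectral^\sigma$) strictly to its left, $\spectral^\sigma$ is affine and equal to $\spectral$ on a whole left-neighborhood, so any strategy optimal at $\lambda_{k-1}-\epsilon$ is left optimal at $\lambda_{k-1}$.
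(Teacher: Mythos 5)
Your proposal, after the self-correction in the final paragraph, is exactly the paper's argument: perturb by $\epsilon=1/(\min(m,n)+2)$, which is strictly smaller than the minimal distance $1/(\min(m,n)+1)$ from the integer $\lambda_{k-1}$ to any breaking point strictly to its left, then multiply all payments by $\min(m,n)+2$ to make them integers bounded by $(\min(m,n)+2)(M+2M(\min(m,n)+1))+1$. Your initial choice $\epsilon=1/(\min(m,n)+1)$ would indeed have failed (it can land exactly on a breaking point, where an optimal strategy need not be left optimal), but you correctly flag and repair this, so the final argument coincides with the paper's proof.
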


\begin{proof}
By Corollary~\ref{c:philambda}, it follows that each $\lambda_k$ is integer 
and the breaking points of $\spectral(\lambda)$ are rational numbers 
whose denominators do not exceed $\min(m,n)+1$.  
Therefore, optimal strategies for the game at
$\lambda^*=\lambda_k-1/(\min(m,n)+2)$ are left optimal strategies at 
$\lambda_k$. Then, we only need to apply a mean payoff oracle in order to 
compute optimal strategies for the perturbed mean payoff game 
with $\epsilon=1/(\min(m,n)+2)$. Multiplying (in the usual sense) 
the payments by $\min(m,n)+2$ 
we obtain a mean payoff game with integer payments and with the same 
optimal strategies (in this sense, equivalent to the perturbed game). 
The computation of optimal strategies in the latter game takes no more than 
$\MPGstrong(m+1,n+1,(\min(m,n)+2)(M+2M(\min(m,n)+1))+1)$ operations, 
because the payments in the perturbed mean payoff game are either of the 
form $a$ or $a+\lambda_k -\epsilon $, 
where $|a|\leq M$ and $|\lambda_k| \leq 2M(\min(m,n)+1)$, 
and these payments are multiplied by $\min(m,n)+2$.
\end{proof}

\begin{example}
In order to find a left optimal strategy $\sigma$ 
for player Max at $\lambda_{k-1}=0$ in Example~\ref{Example3}, 
we only need to compute an optimal strategy for the associated game at 
$\lambda^*=\lambda_{k-1}-1/(\min(m,n)+2)=-1/5$. This can be done 
by solving the game whose payments are given by the matrices 
\[
\left(\begin{array}{cccc}
-15      & -20      & -\infty & -\infty  \\
-5      & -\infty & -\infty & 5 \\
-\infty & -\infty & -\infty & 0\\
5       & -\infty & 0       & -\infty \\
-\infty & 0 & -\infty & -\infty  
\end{array}\right) \; \makebox { and } \;
\left(\begin{array}{cccc}
-\infty & -\infty & -\infty & 0  \\
-\infty & 0       & -\infty & -\infty \\
0       & -\infty & -\infty & -\infty \\
0       & -\infty & -\infty  & 15 \\
14 & -\infty  & -\infty & -\infty  
\end{array}\right) \; ,
\]
which are obtained by multiplying (in the usual sense) 
the payments for the game at $\lambda^*$ by $5$. 
\end{example}

\begin{remark}
\label{r:noleftopt}
Proposition~\ref{p:germs3} was necessary to establish the pseudo-polynomiality
of the positive Newton method, which regularly uses left optimal strategies.
However, as observed in Remark~\ref{r:justopt}, 
the use of left optimal strategies is not necessary when 
$\spectral(\lambda_k)>0$. Moreover, 
when $\spectral(\lambda_k)=0$ and the coefficients are integers, 
an alternative to computing a left optimal strategy is to use an
optimal strategy, checking whether $\spectral(\lambda_k-1)<0$ when 
$\lambda_{k+1}=\lambda_k$. In that case, Corollary~\ref{c:philambda} 
part~\eqref{c:philambdaP2} guarantees that $\lambda_k$ is optimal. Otherwise,
proceed with $\lambda_{k+1}:=\lambda_k-1$. With this modification, 
the complexity of the computation of optimal strategies falls to 
\[
\MPGstrong(m+1,n+1,M+2M(\min(m,n)+1)) \; ,
\]
instead of the bound of Proposition~\ref{p:germs3}. 

\end{remark}

We are now ready to sum up the computational 
complexity of the positive Newton method with left optimal strategies.

\begin{theorem}\label{posnewt-comp}
If the finite coefficients in~\eqref{problem-cones} are integers 
with absolute values bounded by $M$, then the positive Newton method, 
started and finished as in Remark~\ref{r:newtstart}, takes no more than
\[
\begin{split}
O(M &\min(m,n)) \times (O(mn)+O(n^3)+\\
& +\MPGstrong(m+1,n+1,(\min(m,n)+2)(M+2M(\min(m,n)+1))+1))
\end{split}
\] 
operations. In particular, the positive Newton method is pseudo-polynomial.
\end{theorem}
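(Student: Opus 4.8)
The plan is to assemble the stated bound from the analyses of the two subroutines of a Newton iteration together with the iteration count of Proposition~\ref{termination-newt}. First I would invoke Proposition~\ref{termination-newt}\eqref{termination-newtP2}: when the integer coefficients are bounded by $M$ and the positive Newton method is launched at $\lambda^+=2M(\min(m,n)+1)$ and stopped as prescribed in Remark~\ref{r:newtstart}, every iterate $\lambda_k$ is an integer, the sequence $(\lambda_k)$ is strictly decreasing until it stabilizes, and it terminates before reaching $-2M(\min(m,n)+1)$. Hence the number of iterations is at most $4M(\min(m,n)+1)=O(M\min(m,n))$.

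Next I would bound the cost of a single iteration, which consists of step~(i), computing a left optimal strategy $\sigma$ for player Max at $\lambda_{k-1}$, and step~(ii), computing $\lambda_k=\min\{\lambda\in\Rmax\mid\spectral^{\sigma}(\lambda)\ge 0\}$. For step~(ii), Proposition~\ref{prop-newnice2} (via Algorithm~\ref{a:problem-sigma}: reducing to the one-player problem~\eqref{problem-sigma}, eliminating the second subsystem in~\eqref{two-subsystems}, and computing the Kleene star $\underline{z}=E^*h$ by a single-destination shortest path computation as in Remark~\ref{r:shortestpaths}) gives a running time of $O(mn)+O(n^{3})$. For step~(i), Proposition~\ref{p:germs3} shows that a left optimal strategy at the integer point $\lambda_{k-1}$ can be obtained by solving one perturbed mean payoff game with perturbation $\epsilon=1/(\min(m,n)+2)$, rescaled to integer payments of absolute value at most $(\min(m,n)+2)(M+2M(\min(m,n)+1))+1$, which costs $\MPGstrong(m+1,n+1,(\min(m,n)+2)(M+2M(\min(m,n)+1))+1)$ operations. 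I would also note that the preliminary checks of Remark~\ref{r:newtstart} and the verification of Condition~\eqref{ConditionEq} from Subsection~\ref{ss:kleene} (a two-sided feasibility question with coefficients bounded by $M$) each require at most one additional mean payoff oracle call whose complexity is dominated by the above, so they do not affect the asymptotics.

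Multiplying the iteration count $O(M\min(m,n))$ by the per-iteration cost $O(mn)+O(n^{3})+\MPGstrong(m+1,n+1,(\min(m,n)+2)(M+2M(\min(m,n)+1))+1)$ yields the displayed bound. Finally, pseudo-polynomiality follows since $\MPGstrong(m+1,n+1,\cdot)$ is pseudo-polynomial --- the value iteration algorithm of~\cite{ZP-96} computes optimal strategies in time polynomial in $m$, $n$ and the payment bound, and the payment bound appearing here is polynomial in $m$, $n$ and $M$ --- while $O(mn)+O(n^{3})$ is polynomial and the iteration count $O(M\min(m,n))$ is pseudo-polynomial; the product is therefore bounded by a polynomial in $m$, $n$ and $M$.

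The main obstacle is not a deep step but the bookkeeping: one must check that the payment bound fed to the strategy oracle after the $1/(\min(m,n)+2)$ perturbation and the integer rescaling is exactly the quantity claimed, that neither the initialization of Remark~\ref{r:newtstart} nor the check of Condition~\eqref{ConditionEq} perturbs the asymptotic count, and that the integrality hypothesis of Proposition~\ref{termination-newt}\eqref{termination-newtP2} together with the localization of the $\lambda_k$ and of the breaking points provided by Corollary~\ref{c:philambda} are genuinely in force throughout, which is what legitimizes both the iteration bound and the choice $\epsilon=1/(\min(m,n)+2)$.
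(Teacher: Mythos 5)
Your proposal is correct and follows essentially the same route as the paper: the authors likewise bound the number of iterations by $4M(\min(m,n)+1)+1$ using the integrality and localization of the $\lambda_k$, and then combine this with Proposition~\ref{prop-newnice2} (cost $O(mn)+O(n^3)$ per one-player subproblem) and Proposition~\ref{p:germs3} (cost of one call to the strategy oracle on the rescaled perturbed game) to obtain the product bound. Your additional remarks on the initialization and the check of Condition~\eqref{ConditionEq} are harmless elaborations that the paper leaves implicit.
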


\begin{proof}
As the numbers $\lambda_k$ generated by the 
positive Newton method are integers and lie within
$[-2M(\min(m,n)+1),2M(\min(m,n)+1)]$, 
the number of iterations does not exceed $4M(\min(m,n)+1)+1$.
Combining this with Propositions~\ref{prop-newnice2} and~\ref{p:germs3}, 
we get the claim. 
\end{proof}

Note that Remark~\ref{r:noleftopt} can be used to reduce the bound of 
Theorem~\ref{posnewt-comp}, if the left optimal strategies are not used,
getting rid of the factor $(\min(m,n)+2)$ in the third argument of 
$\MPGstrong$.

\subsection{Perturbed mean payoff games as mean payoff games over germs}\label{ss:germs}

Next we discuss an alternative to the perturbation technique
of the previous subsection: instead of considering the mean payoff
game for several values of the perturbation parameter, we may consider
a mean payoff game the payments of which belong to a lattice ordered
group of {\em germs}. In a nutshell, the elements of this group
encode infinitesimal perturbations of the payments.
This algebraic structure
allows one to deal more generally with one-parameter perturbed games
(not only the ones arising from tropical linear-fractional programming).
A similar structure appeared in~\cite{gg0}.
This is somehow analogous to the perturbation methods 
used to avoid degeneracy in linear programming.
We hope to develop this further
in a subsequent work. The materials of this subsection are
not used in the rest of the paper. However, this alternative can be useful in two respects:
1) to develop the present Newton method in the case of real coefficients, 2) to improve the complexity result of the
previous subsection. 

Consider a {\em mean payoff game over germs}, finite-duration version, where the weights of arcs in 
$\Bipdig$ are pairs of real numbers $(a,b)$ endowed with lexicographic order:
\begin{equation}
\label{lex}
(a^1,b^1)\leqlex (a^2,b^2)\Leftrightarrow 
\begin{cases}
a^1<a^2,\ \text{or}\\
a^1=a^2,\ b^1\leq b^2,
\end{cases}
\end{equation}
and the componentwise addition is used to calculate the weights of paths 
(or cycles). These games correspond to two-sided tropical linear systems 
over the semiring of germs $\Germ:=\R^2\cup\{(-\infty,-\infty)\}$, 
where for $g^1=(a^1,b^1), g^2=(a^2,b^2)\in \Germ$ we define 
$\ltr g^1+g^2\rtr:=\max(g^1,g^2)$ following~\eqref{lex} 
and $\ltr g^1g^2\rtr:=(a^1+a^2,b^1+b^2)$. 

With a game over germs we associate an 
{\em $\epsilon$-perturbed mean payoff game}, for $\epsilon \geq 0$,
in which the weights $(a,b)$ of the arcs are replaced by $a+\epsilon b$. 
If the payments in a mean payoff game over germs 
are given by the matrices $A$ and $B$ (with entries in $\Germ$), 
then the matrices associated with the corresponding 
$\epsilon$-perturbed mean payoff game will be denoted by 
$A(\epsilon)$ and $B(\epsilon)$, respectively. 

\begin{proposition}\label{p:germs1}
Suppose that the matrices of payments in a mean payoff game over germs
(finite-duration version) satisfy Assumptions 1 and 2. 
Then this game has a value and positional optimal 
strategies (meaning that \eqref{val-exist} holds for mean payoff games over germs). 
Moreover, if $(\chi_i,\kappa_i)$ is the value of such a game, 
then there exists $\overline{\epsilon}>0$ such that for any 
$0<\epsilon<\overline{\epsilon}$ the associated $\epsilon$-perturbed
mean payoff game has value $\chi_i+\epsilon\kappa_i$ 
and these games have common positional optimal strategies. 
\end{proposition}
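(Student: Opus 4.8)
The plan is to reduce the statement to the already-established value existence results for ordinary mean payoff games, by exploiting the semiring-of-germs structure. First I would observe that, in the finite-duration version, the outcome of a play is the mean weight per turn of the unique elementary cycle formed, and under lexicographic addition the weight of a cycle in $\Germ$ is a pair $(a,b)\in\R^2$ whose mean $\left(a/k,b/k\right)$ is again compared lexicographically. Since $\leqlex$ is a total order, the min-max and max-min expressions analogous to~\eqref{val-exist} make sense; the point is to show they coincide and are attained by positional strategies. The cleanest route is to note that lexicographic comparison of pairs of rationals with bounded denominators can be linearized: if all finite entries of $A,B$ are, say, integers bounded by $M$ and cycles have length at most $\min(m,n)+1$, then $(a^1,b^1)\leqlex(a^2,b^2)$ iff $a^1+\delta b^1\leq a^2+\delta b^2$ for every sufficiently small $\delta>0$ (it suffices that $\delta<1/(2M(\min(m,n)+1)^2)$ or so). Hence the game over germs has, for each fixed small $\delta>0$, the same set of optimal positional strategies and the same comparison structure as the ordinary $\delta$-perturbed mean payoff game, to which Theorem~\ref{value} and~\eqref{val-exist} apply. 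This already yields value existence and positional optimal strategies for the game over germs, with value $(\chi_i,\kappa_i)$ read off from the value $\chi_i+\delta\kappa_i$ of the $\delta$-perturbed game for small $\delta$.

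For the second assertion, I would argue as follows. By the first part there exist positional strategies $\sigma^*$ (Max) and $\tau^*$ (Min) that are optimal in the game over germs; equivalently, for every positional $\sigma$ and $\tau$ one has, in $\Germ$,
\[
\Phi_{A,B}(j,\tau^*,\sigma)\leqlex\Phi_{A,B}(j,\tau^*,\sigma^*)\leqlex\Phi_{A,B}(j,\tau,\sigma^*)\enspace.
\]
Each $\Phi_{A,B}(j,\tau,\sigma)$ here is the mean pair $\left(\alpha_{\sigma,\tau}/k_{\sigma,\tau},\beta_{\sigma,\tau}/k_{\sigma,\tau}\right)$ of the accessible cycle in $\Bipdig^{\sigma,\tau}$. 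Since there are finitely many positional strategy pairs, there are finitely many such pairs, and a lexicographic inequality between two of them that is strict in the first coordinate remains a strict numerical inequality $a^1+\epsilon b^1<a^2+\epsilon b^2$ for all small $\epsilon>0$; one that is an equality in the first coordinate becomes the comparison of the second coordinates, again preserved. Taking $\overline\epsilon$ smaller than the minimum over all the (finitely many) relevant thresholds, we get for $0<\epsilon<\overline\epsilon$ that $\sigma^*,\tau^*$ are optimal positional strategies in the $\epsilon$-perturbed game as well, and that its value at node $j$ equals $\alpha_{\sigma^*,\tau^*}/k+\epsilon\,\beta_{\sigma^*,\tau^*}/k=\chi_j+\epsilon\kappa_j$.

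The main obstacle, and the only place needing care, is making the ``$\leqlex$ is eventually equivalent to $+\epsilon$'' passage fully rigorous and uniform: one must verify that a single $\overline\epsilon>0$ works simultaneously for all nodes $j$ and all the lexicographic comparisons entering the optimality inequalities, and that the bound on $\overline\epsilon$ depends only on $M$ and the dimensions (so that it is a genuine positive number, not a strategy-dependent infimum that could be $0$). This is where the a priori bound on cycle lengths ($\leq\min(m,n)+1$) and on cycle weights ($|\alpha|\leq 2M(\min(m,n)+1)$ type estimates, as in Theorem~\ref{t:philambda}) is used: it guarantees that the finitely many thresholds arising from lexicographic ties are bounded away from zero by an explicit quantity. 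Everything else — the existence of positional optimal strategies and formula~\eqref{val-exist} for the game over germs — then follows by applying the classical results (Theorem~\ref{value}, Theorem~\ref{chi-duality}, and~\eqref{val-exist}) to any fixed $\epsilon$-perturbed game with $\epsilon\in(0,\overline\epsilon)$.
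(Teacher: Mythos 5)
Your proposal is correct and follows essentially the same route as the paper: fix a single sufficiently small perturbation parameter, invoke the classical value-existence result~\eqref{val-exist} for the resulting ordinary mean payoff game, and transfer positional optimal strategies in both directions between the perturbed game and the game over germs, using the finiteness of the set of elementary-cycle means to obtain a uniform threshold $\overline{\epsilon}$. One remark: you hedge the choice of threshold on the payments being integers bounded by $M$ (proposing something like $\delta<1/(2M(\min(m,n)+1)^2)$), but the proposition makes no integrality assumption, and indeed the stated purpose of this subsection is to handle real coefficients; the paper instead sets $\overline{\epsilon}=\delta/4M$ where $\delta$ is the minimal \emph{nonzero} difference between cycle means of the unperturbed game and $M$ bounds the second components of the germs --- a positive, instance-dependent quantity, which is all that is needed (a minimum over a finite set of positive thresholds cannot vanish, contrary to the worry in your last paragraph). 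Your own finiteness argument in the second half already supplies this, so the integrality detour is unnecessary rather than wrong. Finally, when writing the saddle-point inequalities you should quantify over all strategies, not only positional ones; this costs nothing since in the finite-duration game every play terminates on an elementary cycle, so the set of possible outcomes remains finite.
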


\begin{proof}
Note that if the matrices of payments $A$ and $B$ in a mean 
payoff game over germs satisfy Assumptions 1 and 2, 
then for any $\epsilon$-perturbed mean payoff game the corresponding 
matrices $A(\epsilon)$ and $B(\epsilon)$ also satisfy these assumptions.
 
Let $\delta$ be the minimal absolute value of nonzero differences 
between cycle means in the mean payoff game with payments given by 
$A(0)$ and $B(0)$, and let $M$ be the greatest absolute value of the second
component of germs. Define $\overline{\epsilon}:=\delta/4M$ 
and consider any $\epsilon $ such that $0<\epsilon<\overline{\epsilon}$. 

By~\eqref{val-exist}, for the $\epsilon$-perturbed mean payoff game 
there exist positional strategies $\sigma^*$ and $\tau^*$ such that 
\[ 
\Phi_{A(\epsilon),B(\epsilon)}(j,\tau^*,\sigma)\leq 
\Phi_{A(\epsilon),B(\epsilon)}(j,\tau^*,\sigma^*)\leq 
\Phi_{A(\epsilon),B(\epsilon)}(j,\tau,\sigma^*)
\] 
for all (not necessarily positional) strategies $\sigma$ and $\tau$. 
Let $\sigma $ be any strategy for player Max and assume that 
$\Phi_{A(\epsilon),B(\epsilon)}(j,\tau^*,\sigma)=a +\epsilon b$ and 
$\Phi_{A(\epsilon),B(\epsilon)}(j,\tau^*,\sigma^*)=\chi_j+\epsilon \kappa_j$. 
If $a=\chi_j$, we have $b\leq \kappa_j$ because 
$a +\epsilon b \leq \chi_j+\epsilon \kappa_j$. 
Otherwise (i.e., if $a\neq \chi_j$), since 
$|b|,|\kappa_j|\leq 2M$, $|a-\chi_j|\geq \delta$, 
$\epsilon <\overline{\epsilon}=\delta/4M$, and 
$a +\epsilon b \leq \chi_j+\epsilon \kappa_j$, it follows that $a<\chi_j$. 
Therefore, we conclude that 
$\Phi_{A,B}(j,\tau^*,\sigma)=(a,b)\leqlex (\chi_j,\kappa_j)=\Phi_{A,B}(j,\tau^*,\sigma^*)$. 
The same argument shows that 
$\Phi_{A,B}(j,\tau^*,\sigma^*)\leqlex \Phi_{A,B}(j,\tau ,\sigma^*)$
for any strategy $\tau$ for player Min. 
This proves that the finite duration version of the mean 
payoff game over germs  has a value, 
given by $(\chi_j,\kappa_j)$, 
and that positional optimal strategies for 
the $\epsilon$-perturbed mean payoff game 
(with $\epsilon <\overline{\epsilon}$)
are also optimal for the game over germs.  

Assume now that $\sigma^*$ and $\tau^*$ are positional optimal strategies 
for the finite duration version of the mean payoff game over germs, 
and let $(\chi_j,\kappa_j)$ be its value. Then, 
$\Phi_{A,B}(j,\tau^*,\sigma)\leqlex (\chi_j,\kappa_j)\leqlex \Phi_{A,B}(j,\tau,\sigma^*)$ 
for all strategies $\sigma$ and $\tau$ (not necessarily positional). 

Let $\sigma $ be any strategy for player Max and assume that 
$\Phi_{A,B}(j,\tau^*,\sigma)=(a,b)$.  
If $a=\chi_j$, we conclude $b\leq \kappa_j$ because 
$\Phi_{A,B}(j,\tau^*,\sigma)\leqlex (\chi_j,\kappa_j)$, and thus 
$\Phi_{A(\epsilon),B(\epsilon)}(j,\tau^*,\sigma)=a+\epsilon b\leq 
\chi_j+\epsilon\kappa_j$ for any $\epsilon \geq 0$. 
Suppose now that $a<\chi_j$. 
Since $|b|,|\kappa_j|\leq 2M$ and $\chi_j-a\geq \delta$, it follows that 
$\Phi_{A(\epsilon),B(\epsilon)}(j,\tau^*,\sigma)=a+\epsilon b\leq 
\chi_j+\epsilon\kappa_j$ for any $\epsilon$ such that 
$0<\epsilon<\overline{\epsilon}$. The same argument shows that 
$\Phi_{A(\epsilon),B(\epsilon)}(j,\tau,\sigma^*)\geq \chi_j+\epsilon\kappa_j$ 
for any strategy $\tau$ for player Min  
and any $\epsilon$ such that $0<\epsilon<\overline{\epsilon}$. 
Therefore, we conclude that $\sigma^*$ and $\tau^*$ 
are positional optimal strategies 
for the $\epsilon$-perturbed mean payoff game and that 
its value is $\chi_j+\epsilon\kappa_j$. 
This proves the claim.  
\end{proof}     

\begin{remark}\label{r:germs-convenience}
Proposition~\ref{p:germs1} opens the way to using mean payoff games 
over germs in order to find right or left optimal strategies 
in the Newton methods. In that case, 
note that the second component of all finite weights must be set to $0$ 
except for the arcs connecting node $m+1$ of Max with nodes of Min, 
where it is set to $1$ (for right optimality) or to $-1$ (for left
optimality). 
This raises the issue of developing a direct combinatorial algorithm
to solve mean payoff games over germs. Such an algorithm would avoid
the perturbation technique of the previous subsection. This will be
discussed elsewhere.
\end{remark}

\begin{example}
By Proposition~\ref{p:germs1}, in Example~\ref{Example3} 
we could find a left optimal strategy $\sigma$ 
for player Max at $\lambda_{k-1}=0$  
computing an optimal strategy for the mean payoff game over germs 
whose matrices of payments are:  
\[
\left(\begin{array}{cccc}
(-3,0) & (-4,0) & \ZGerm & \ZGerm \\
(-1,0) & \ZGerm & \ZGerm & (1,0) \\
\ZGerm & \ZGerm & \ZGerm & (0,0) \\
(1,0) & \ZGerm & (0,0) & \ZGerm \\
\ZGerm & (0,0) & \ZGerm & \ZGerm  
\end{array}\right) \; \makebox { and } \;
\left(\begin{array}{cccc}
\ZGerm & \ZGerm & \ZGerm & (0,0)  \\
\ZGerm & (0,0) & \ZGerm & \ZGerm \\
(0,0) & \ZGerm & \ZGerm & \ZGerm \\
(0,0) & \ZGerm & \ZGerm  & (3,0) \\
(3,-1) & \ZGerm  & \ZGerm & \ZGerm  
\end{array}\right) \; ,
\]
where $\ZGerm:=(-\infty,-\infty)$.
\end{example}

\begin{remark}\label{r:algorithms}
Proposition~\ref{p:germs1} extends~\eqref{val-exist} to germs. 
Note that this equation provides a very crude algorithm for 
computing values and optimal strategies. 
Further idea is to allow more general algorithms, 
showing that they can be applied to mean payoff games over germs. 
This will be investigated elsewhere. 
\end{remark}

\section{Examples}

\subsection{Minimization}\label{SectionExampleMin}

\begin{figure}
\begin{center}
\begin{tabular}{c@{{}{}}c}
\begin{minipage}{0.45\textwidth}
\includegraphics[width=0.75\linewidth]{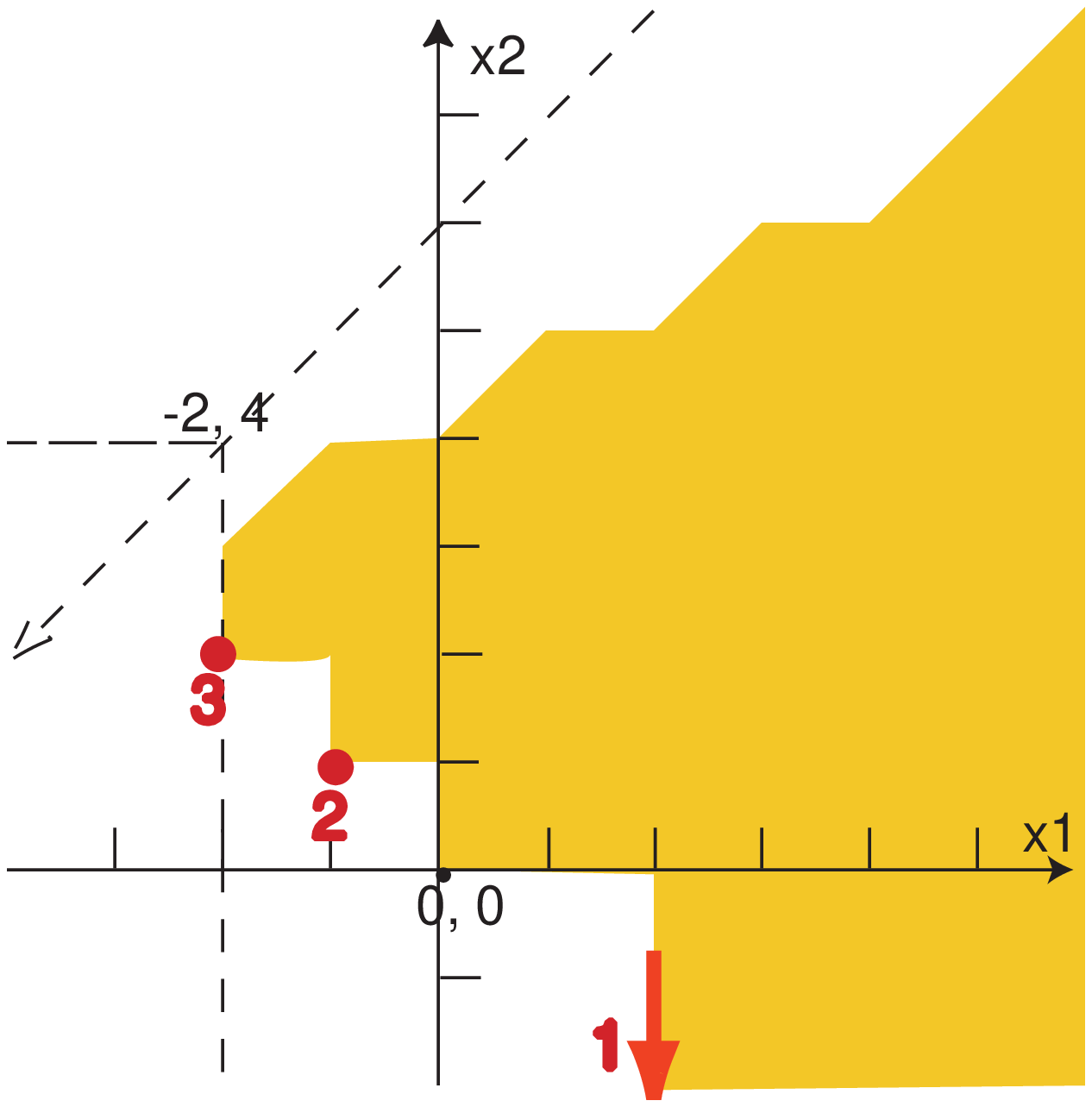} 
\end{minipage}&
\begin{minipage}{0.45\textwidth}
{{\includegraphics[width=1\linewidth]{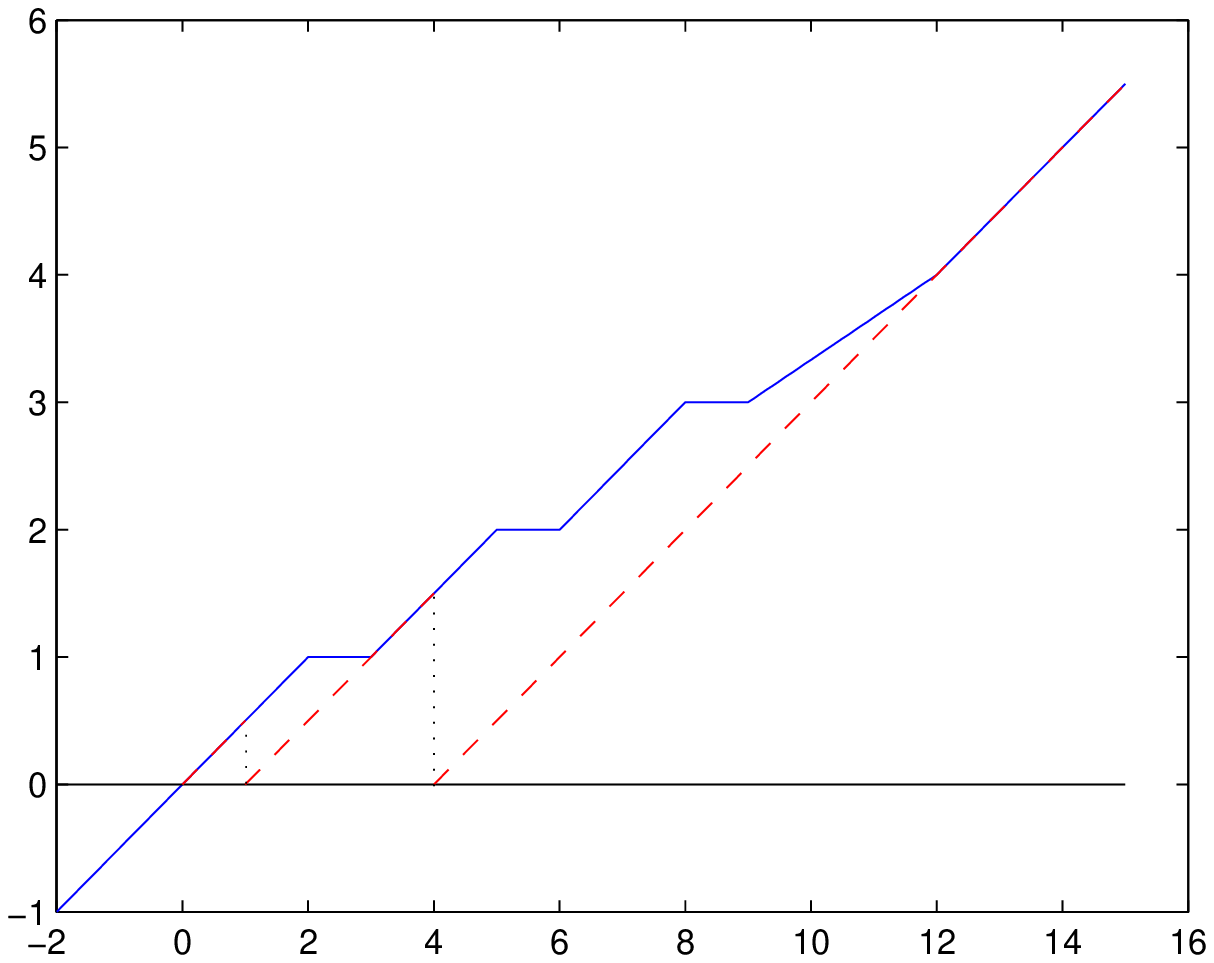}}}
\end{minipage}
\end{tabular}
\caption{The minimization problem of Example~\ref{Example2}: 
the tropical polyhedron and the spectral function.}
\label{tropprogs-ex}
\end{center}
\end{figure}

We next apply the positive Newton method to 
the minimization problem of Example~\ref{Example2}. 
Recall that the equivalent homogeneous version of this problem  
is to minimize $\lambda $ subject to 
$u y\leq \lambda + v y$, $C y \leq D y$, and $y_3\neq -\infty$, 
with $C=[A,c]$, $D=[B,d]$, $u=(2,-4,-\infty)$ and $v=(-\infty,-\infty,0)$, 
where the matrices $A,B$ and the vectors $c,d$ 
are given in Example~\ref{Example2}
(also recall that in this example, $m=7$ and $n=2$). 
Note that in this case, for any strategy $\sigma $ for player Max, 
the only possible value for $l:=\sigma(m+1)=\sigma(8)$ is $n+1=3$. 
Then, at each iteration $k$ of the positive 
Newton method applied to this problem,  
in order to compute $\lambda_k+v_l=\lambda_k$ we need to minimize 
$(2+y_1)\vee (-4+y_2)$ 
subject to the system obtained by setting 
$y_3=0$ in $C y \leq D^\sigma y$, 
as explained in Subsection~\ref{ss:kleene}. The latter is   
Problem~\eqref{problem-sigma} for this particular case.  

We start the positive Newton method with $\lambda_0=15$, 
where $\spectral(\lambda_0)=5.5$. 
The function $\sigma(1)=1$,  
$\sigma(2)=1$, \ldots , $\sigma(7)=1$ and $\sigma(8)=3$ 
is an optimal strategy for player Max at $\lambda_0$. 
To perform the first Newton iteration, 
we find the minimal solution of the system 
\[
\begin{split}
0\leq x_1-2 \; ,\; 0\leq x_1 \; , \; 0\leq 1+x_1 \; , \; x_2-4\leq x_1 \; , \\
(x_2-3)\vee 0\leq x_1+2 
\; ,\; x_2-3\leq x_1 \; ,\; x_2-2\leq x_1 \;,
\end{split}
\]
which is $(\underline{x}_1,\underline{x}_2)=(2,-\infty)$. The next value is 
$\lambda_1=(2+\underline{x}_1)\vee (-4+\underline{x}_2)=4$. 
Then, $\spectral(\lambda_1)=1.5$ 
and $\sigma(1)= 2$, $\sigma(2)= 2$, $\sigma(3)= 1$, $\sigma(4)= 1$, 
$\sigma(5)= 1$, $\sigma(6)=3$, $\sigma(7)= 3$ and $\sigma(8)=3$ 
is a new optimal strategy for player Max. 
For the next Newton iteration, we find the minimal solution of the
system
\[
\begin{split}
0\leq x_2 \; ,\; 0\leq x_2-1 \; ,\; 0\leq 1+x_1 \; ,\; 
\; x_2-4\leq x_1 \; , \\ 
(x_2-3)\vee 0\leq x_1+2 
\; ,\; x_2-5\leq 0 \; ,\; x_2-6\leq 0 \; ,
\end{split}
\]
which is $(\underline{x}_1,\underline{x}_2)=(-1, 1)$. 
Then, the next value is 
$\lambda_2=(2+\underline{x}_1)\vee (-4+\underline{x}_2)=1$. 
Now $\spectral(\lambda_2)=0.5$ 
and $\sigma(1)=2$, $\sigma(2)=2$, $\sigma(3)=2$, $\sigma(4)=1$, 
$\sigma(5)=3$, $\sigma(6)=3$, $\sigma(7)=3$ and $\sigma(8)=3$  
is the optimal strategy for player Max. For the next Newton iteration, 
we find the minimal solution of the system
\[
\begin{split}
0\leq x_2 \; ,\; 0\leq x_2-1 \; ,\; 0\leq x_2-2 \; , \; 
(x_2-3)\vee 0 \leq x_1+2 
 \; , \\ 
x_2-4\leq 0 \; ,\; x_2-5\leq 0 \; ,\; x_2-6\leq 0 \; ,
\end{split}
\]
which is $(\underline{x}_1, \underline{x}_2)=(-2, 2)$. 
This gives $\lambda_3=(2+\underline{x}_1)\vee (-4+\underline{x}_2)=0$, 
which is the optimal value $\lambda^\ast$. 
The optimality of $\lambda^\ast=0$ can be certified applying 
Theorem~\ref{1st-cert}, see Example~\ref{Example2} above. 

The vectors $(2,-\infty)$, $(-1,1)$ and $(-2,2)$
found by the Newton iterations are indicated on the left-hand side
of Figure~\ref{tropprogs-ex} as ``1'', ``2'' and ``3''.

The right-hand side of Figure~\ref{tropprogs-ex} displays the graph
of $\spectral(\lambda)$, together with the Newton iterations. The
graphs of partial spectral functions $\spectral^{\sigma}(\lambda)$
are given by red dashed lines.

\subsection{Maximization} 

\begin{figure}
\begin{center}
\begin{tabular}{c@{{}{}}c}
\begin{minipage}{0.45\textwidth}
\vspace{-1em}
\includegraphics[width=0.75\linewidth]{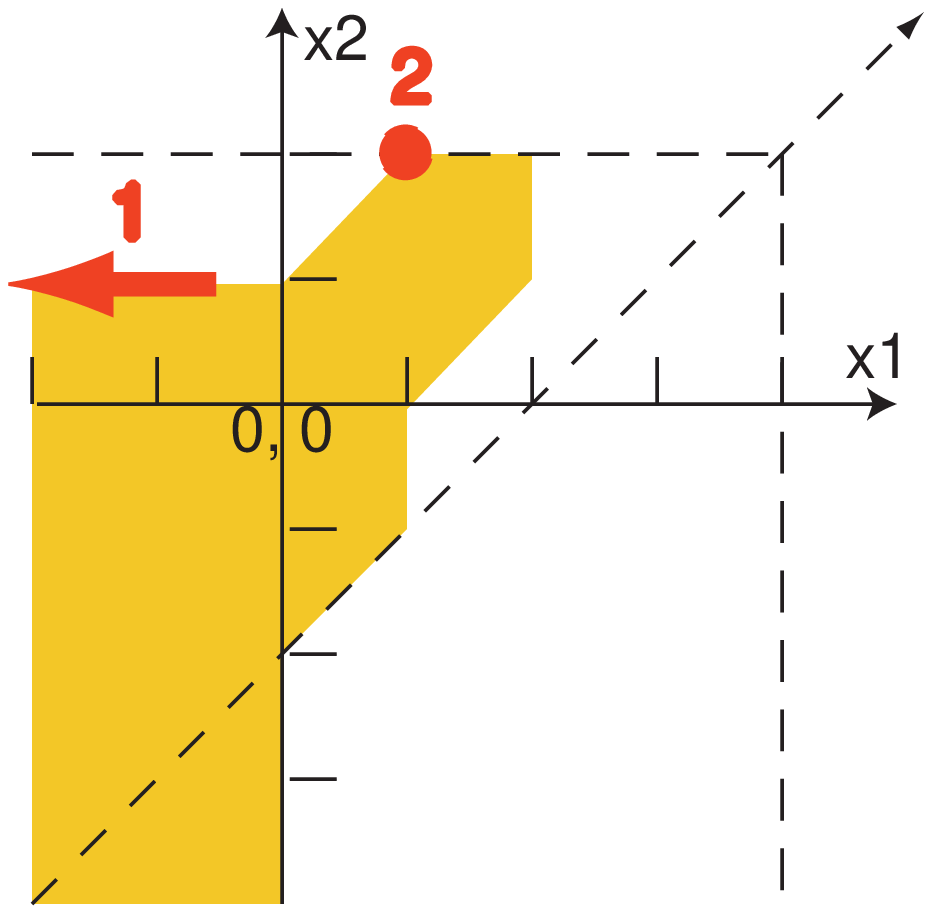} 
\end{minipage}
& 
\begin{minipage}{0.45\textwidth}
\includegraphics[width=1\linewidth]{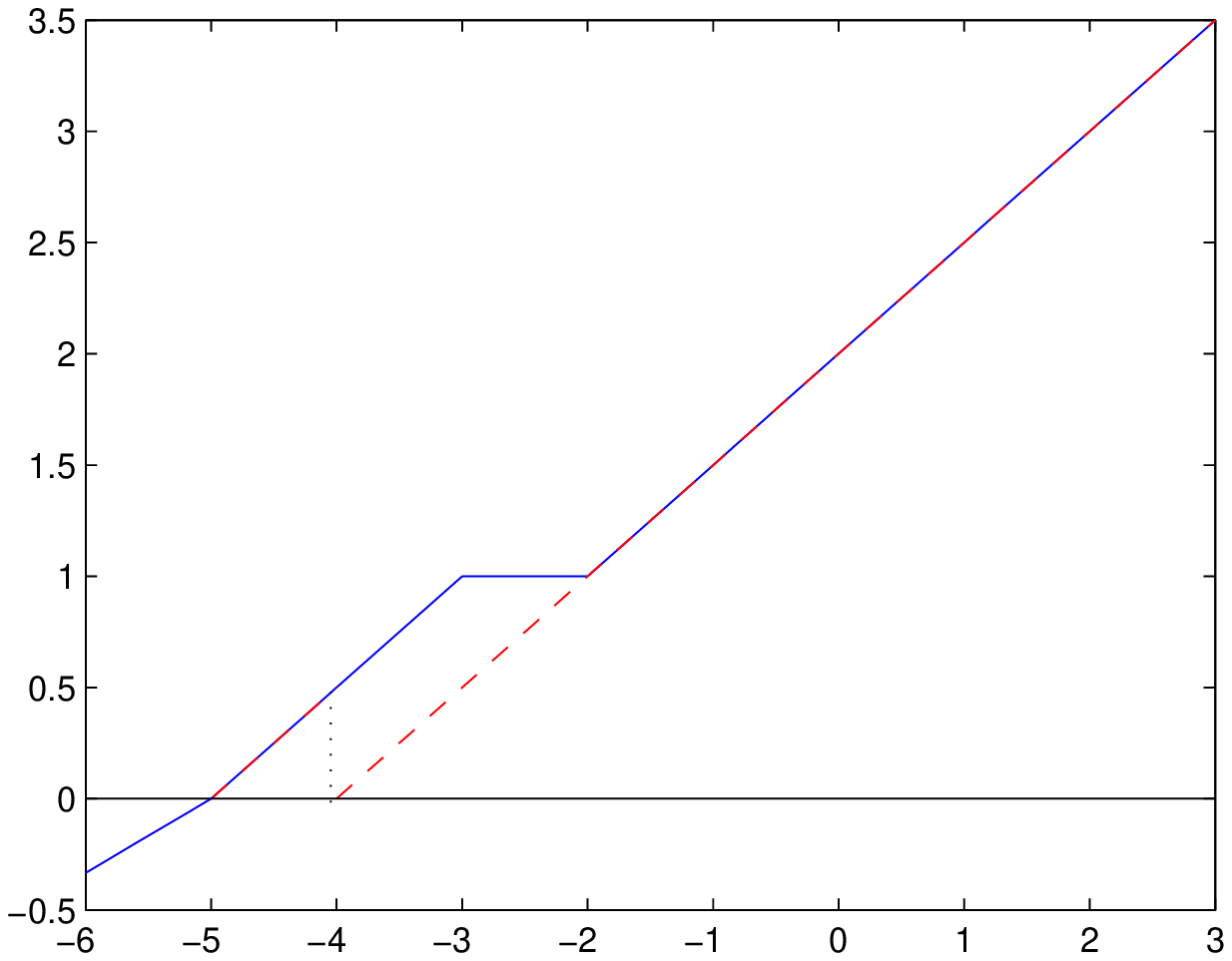}
\end{minipage}
\end{tabular}
\caption{The maximization problem of Example~\ref{Example1}: the tropical polyhedron and the spectral function.} 
\label{fig-max}
\end{center}
\end{figure}

Consider the maximization problem of Example~\ref{Example1}. 
We next apply the positive Newton method to the  
homogeneous version~\eqref{EqProbMaxExample}
of the equivalent minimization problem.

With this aim, firstly observe that in this case 
$n+1=3\in \supp (u)=\{ 3 \}$, and so Condition~\eqref{ConditionEq} 
is always satisfied. Therefore, as explained in Subsection~\ref{ss:kleene},  
at each iteration of the positive Newton method applied to this problem, 
$\lambda_k+v_l$ can be computed by minimizing $y_3$ subject to the 
system obtained by setting $y_l=0$ in $C y \leq D^{\sigma} y$, 
which corresponds to Problem~\eqref{problem-sigma} in the case of this example. 
 
Let us take $\lambda_0=3$. Then, we obtain that
$\sigma(1)= 3$, $\sigma(2)= 3$, $\sigma(3)= 3$, $\sigma(4)= 3$ and 
$\sigma(5)= 2$ is an optimal strategy for player Max at $\lambda_0$. 
To perform the Newton iteration we first notice that 
$l=\sigma(m+1)=\sigma(5)=2$, 
which means that $\lambda_1+v_2=\lambda_1+3$ is the minimum of $y_3$  
subject to the system obtained by setting 
$y_2=0$ in $C y \leq D^{\sigma} y$, 
as explained above. 
Thus, we have to find the minimal solution of the following system:
\[
\; x_3\geq -1\; , \; x_3\geq (-2+x_1)\vee (-2) 
\; ,\; x_3\geq -1+x_1\; 
,\; x_3\geq x_1\; ,
\]
which is $(\underline{x}_1,\underline{x}_3)=(-\infty,-1)$. 
The full vector $y=(-\infty , 0 ,-1)$ 
is a translate of $y+1=(x_1 , x_2 , 0)$, 
where $(x_1,x_2)=(-\infty,1)$ is marked 
as ``1'' at the left of Figure~\ref{fig-max}. 
Meanwhile we obtain $\lambda_1=\underline{x}_3-3=-4$, and
$\sigma(1)= 1$, $\sigma(2)= 3$, $\sigma(3)= 2$, $\sigma(4)= 2$ 
and $\sigma(5)= 2$ is now a new optimal strategy for player Max. 
Again, here $l=\sigma(m+1)=\sigma(5)=2$ and so the second columns of 
$C$ and $D^{\sigma}$
are the free terms in~\eqref{problem-sigma}. 
We have to find the minimal solution of the following system:
\[
\; x_1\geq -1\; ,\; x_3\geq (-2+x_1)\vee (-2) 
\; , \; 0\geq -1+x_1\; ,
\; 2\geq x_1\;,
\]
which is $(\underline{x}_1,\underline{x}_3)=(-1,-2)$. 
We obtain $\lambda_2=\underline{x}_3-3=-5$, which is the optimal value, 
and so the value of the original maximization problem is $5$. 
The full vector $y=(-1, 0, -2)$ is a
translate of $y+2=(x_1, x_2, 0)$, 
where $(x_1,x_2)=(1,2)$ 
is marked as ``2'' at the left of Figure~\ref{fig-max}.

As in the case of Figure~\ref{tropprogs-ex}, the right-hand side of
Figure~\ref{fig-max} displays the graph of $\spectral(\lambda)$,
together with the Newton iterations. The graphs of partial spectral
functions $\spectral^{\sigma}(\lambda)$ are given by red dashed lines.

\subsection{Numerical experiments}\label{ss:NumExp}
 
A preliminary implementation of the bisection and Newton methods for
tropical linear-fractional programming was developed in MATLAB. 
We next present some graphs showing how they behave on 
randomly generated instances of tropical linear-fractional programming problems,
in which the entries of matrices and vectors range from $-500$ to $500$. 
The matrices $A$ and $B$ in~\eqref{problem-straight} and~\eqref{problem-sets} 
are square, with dimensions ranging from $1$ to $400$. 

\begin{figure}
\begin{center}
\begin{tabular}{@{}c@{{}\qquad\qquad{}}c}
\begin{minipage}{0.3763158\linewidth}
\includegraphics[width=1.4\linewidth,height=0.887\linewidth]{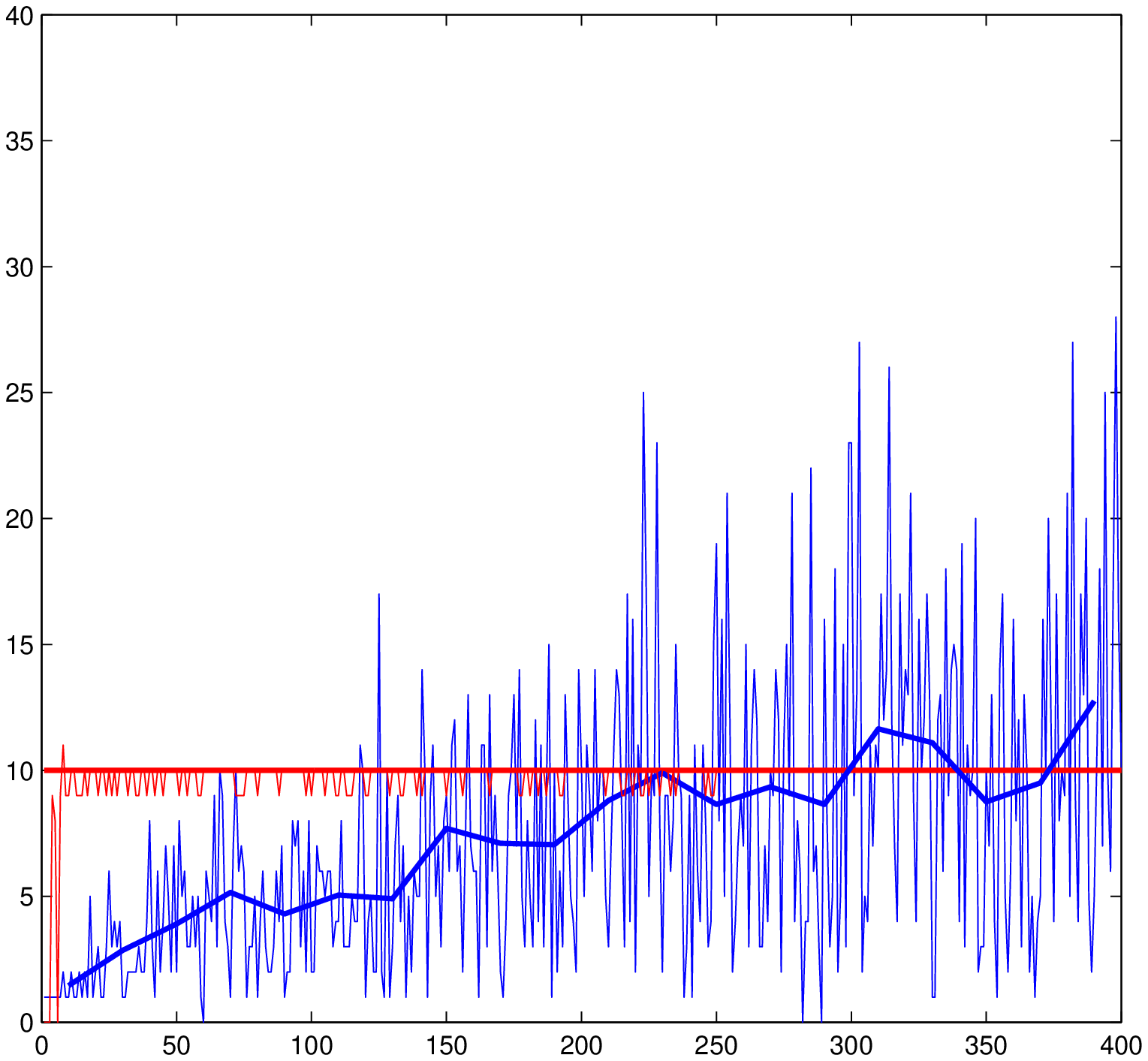}
\end{minipage} &
\begin{minipage}{0.55\linewidth}
\includegraphics[width=\linewidth]{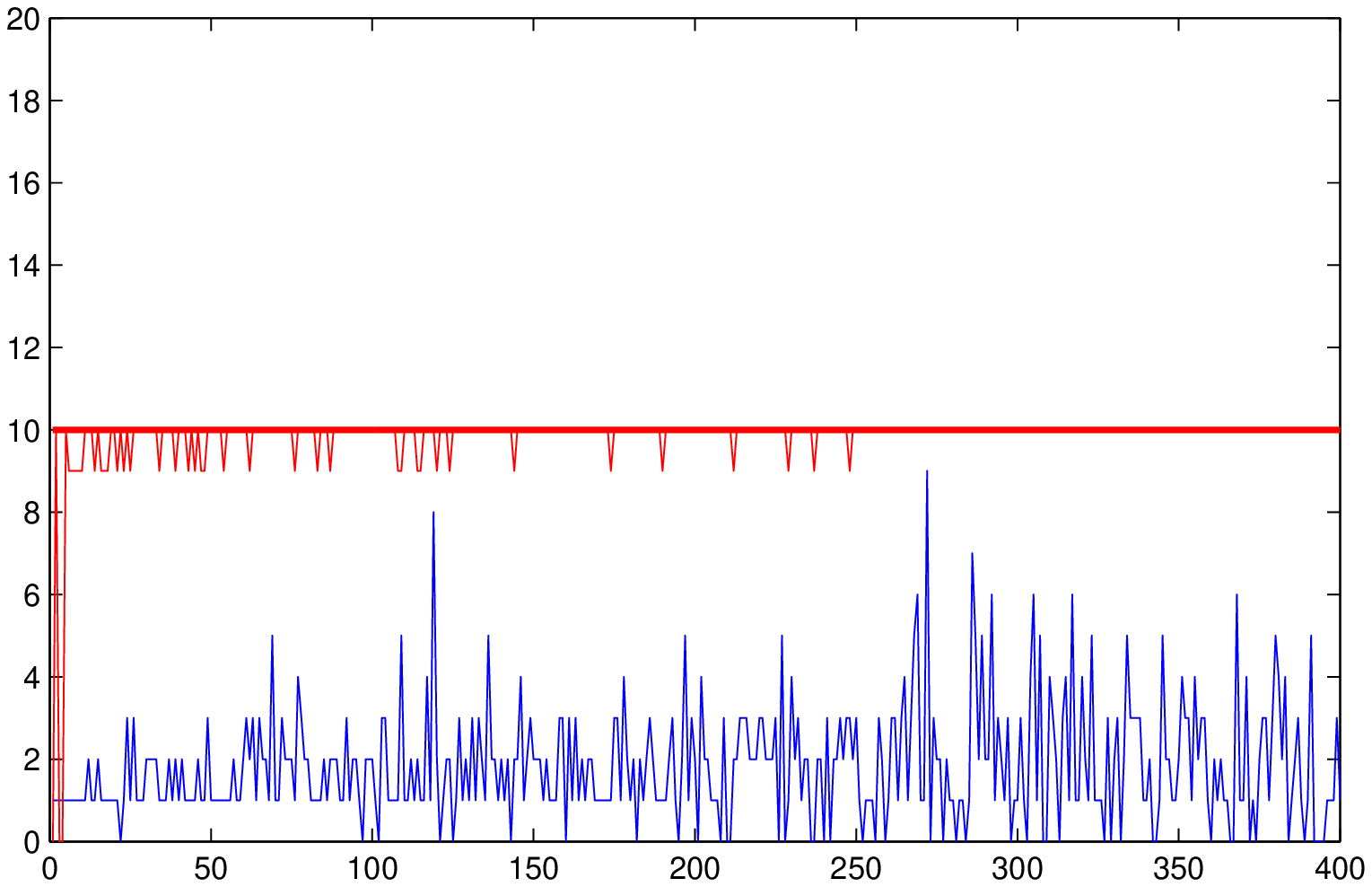}
\end{minipage}
\end{tabular}
\caption{Number of iterations of Newton method (thin blue line)
and bisection method (thin red line) in the cases of minimization (left)
and maximization (right). Thick blue line:
average number of iterations of Newton method for each
interval of 20 dimensions. Thick red line: level $\log(2M)\approx 10$.}
\label{f:minmax}
\end{center}
\end{figure}

Figure~\ref{f:minmax} displays the cases of the tropical linear 
programming~\eqref{problem-straight}, in which all the entries are finite. 
Here the certificates of unboundedness reduce to the solvability 
of a two-sided tropical system of inequalities. 
When a feasible and bounded problem is generated, 
it is solved by the bisection and Newton methods. 

For the bisection method, we use the lower initial values 
$\underline{\lambda}_0$ of~\cite{BA-08}, 
see also Remark~\ref{r:bisbounds}. Following~\cite{BA-08},
the upper initial values $\overline{\lambda}_0$ for 
the bisection method come from 
a solution of $Ax\vee c\leq Bx\vee d$. To find this solution, 
we use the policy iteration of~\cite{DG-06} 
instead of the alternating method of~\cite{CGB-03}. 
Shown by the thin red line (up to $m=n=250$), 
the bisection method worked similarly in the case of minimization and 
maximization. In our experiments, the interval between lower and upper
initial values never exceeded $2M=1000$, 
with the number of iterations quickly approaching
a constant level of $9$ or $10$ iterations ($\log M\approx 9$ or 
$\log 2M\approx 10$).   

\begin{figure}
\begin{center}
\includegraphics[width=0.5\linewidth]{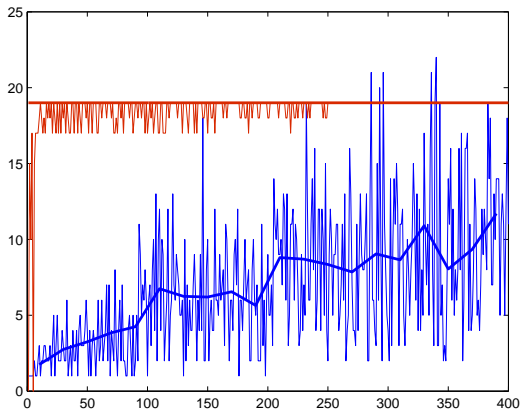}%crop2} 
\caption{Number of iterations of Newton method (thin blue line)
and bisection method (thin red line)
in the case of minimization with $M=500000$. Thick blue line:
average number of iterations of Newton method
for each interval of 20 dimensions. Thick red line:
level $\log(2M)-1\approx 19$.}
\label{f:bigentries}
\end{center}
\end{figure}
 
The thin blue line represents the run of Newton method, 
and the thick blue line represents their average number
calculated for each interval of $20$ dimensions. For the sake of fair comparison
with the bisection method, the initial value $\lambda_0$ coincides with
the upper initial value $\overline{\lambda}_0$ for the bisection method. 
This value comes from a solution of $Ax\vee c\leq Bx\vee d$, 
instead of the theoretical value $2M(n+1)$, 
which depends on $n$ and may be much greater.  
In the case of minimization, 
the average number of Newton iterations slowly grows, 
being smaller than $10$ before $n\approx 250$, 
but exceeding $10$ at larger dimensions. Naturally, 
the number of iterations for the same dimension may be very different, 
depending on the configuration and complexity of the tropical polytopes 
(i.e., the solution sets of $Ax\vee c\leq Bx\vee d$). 
In the case of maximization, the number of iterations is usually below $5$. 
Note that maximization is resolved immediately if we find the greatest point of the solution set, which suggests that the maximization
problem may be simpler. 
We also remark that there is no correlation between 
the number of iterations of the bisection and Newton methods.
In particular, it is easy to construct instances with large
integers in which the number of bisection iterations becomes
arbitrarily large, whereas the number of Newton iterations
remains bounded. This agrees with Figure~\ref{f:bigentries}, where
in comparison to the graph on the left-hand side of 
Figure~\ref{f:minmax}, $M$ is equal to $500000$ instead of $500$.

\begin{figure}
\begin{center} 
\begin{tabular}{@{}c@{{}{}}c}
\begin{minipage}{0.49\linewidth}
\includegraphics[width=\linewidth]{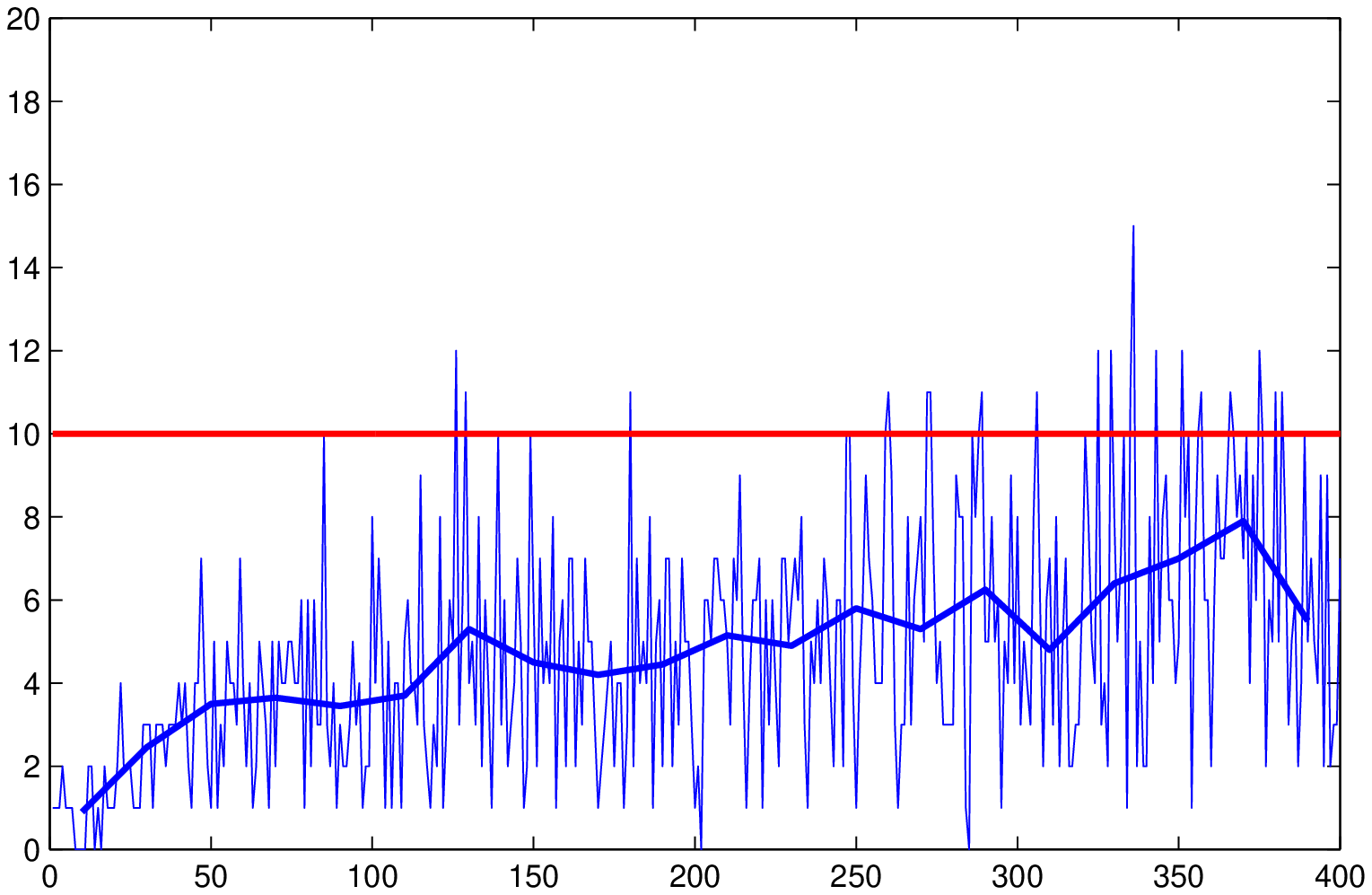}%
\end{minipage} &
\begin{minipage}{0.4\linewidth}
\includegraphics[width=\linewidth]{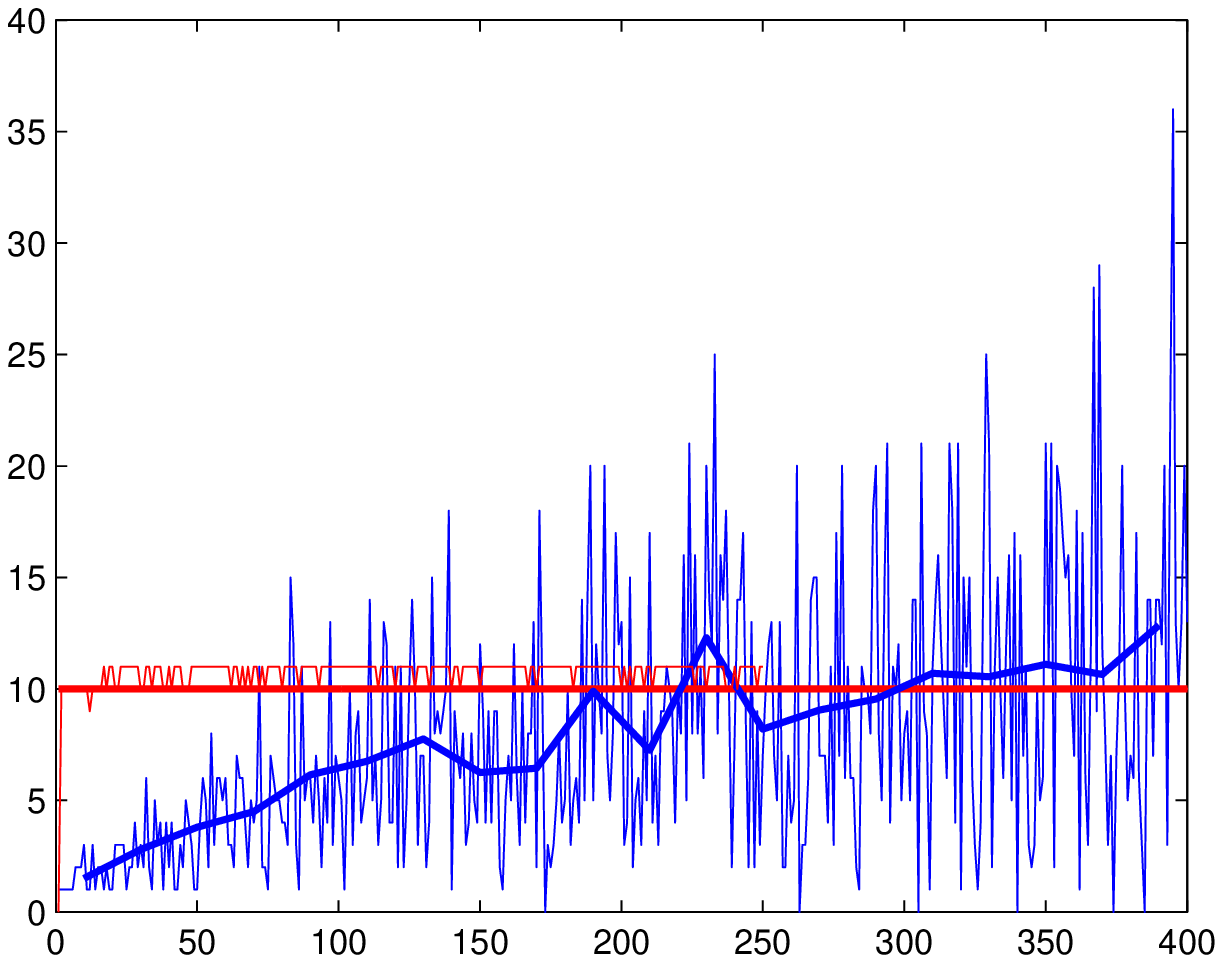}%crop2}
\end{minipage}
\end{tabular}
\caption{Number of iterations of Newton method (thin blue line)
and bisection method (thin red line) in the cases of linear-fractional 
programming with finite entries (right)
and when the average proportion of $-\infty$ entries is 0.7 (left).
Thick blue line: average number of iterations of Newton method
for each interval of 20 dimensions.
Red line on the left: level $\log(2M)\approx 10$.}
\label{f:gtp}
\end{center}
\end{figure}

Figure~\ref{f:gtp} displays the cases of tropical 
linear-fractional programming~\eqref{problem-sets} with all entries finite 
(right) and with a $0.7$ frequency of $-\infty$ entries (left). 
In the case of $-\infty$ entries, 
as required by Assumptions~1 and~2, we ensure that the set of constraints 
contains neither $-\infty$ rows on the right-hand side nor $-\infty$ 
columns on the left-hand side. 
The case of tropical linear-fractional programming with finite entries 
shows almost the same picture as in the case of minimization above. 
The case when $-\infty$ appears with a regular frequency is even more favorable 
for Newton method, due to the sparsity of $\Bipdig$. 

\section{Conclusion}

In this paper, we developed an algorithm to solve tropical linear-fractional programming problems. 
This is motivated by the works~\cite{AGG08} and~\cite{AllamigeonThesis}, in which disjunctive invariants of programs are computed by tropical methods:
tropical linear-fractional programming problems are needed to tropicalize
the method 
of templates introduced
by Sankaranarayanan, Colon, Sipma and Manna~\cite{Sriram1,Sriram2}. 

The main technical ingredient, which combines ideas appearing in~\cite{AGG-10,AGK-10,GStwosided10}
 is to introduce a {\em parametric} zero-sum two-player game 
(in which the payments depend on a scalar variable), in such
a way that the value of the initial tropical linear-fractional programming problem coincides
with the smallest value of the variable for which the game is winning
for one of the players. 
The value of the parametric game, which we call the {\em spectral function},
is a piecewise affine function of the variable. Then, the problem
is reduced to finding the smallest zero of the spectral function,
which we do by a Newton-type algorithm, in which
at each iteration, we solve a one-player auxiliary game.
 
Using this game-theoretic connection, we present concise certificates expressed
in terms of the strategies of both players,  allowing one
to check whether a given feasible solution is optimal, or
whether the tropical linear-fractional programming problem is unbounded.
This is inspired by~\cite{AGK-10}, in which 
certificates of the same nature were given for the simpler problem
of certifying whether a tropical linear inequality
is a logical consequence of a finite family of such inequalities.

We also develop a generalization of the bisection method of~\cite{BA-08}. 
The latter, as well as Newton method, are shown to
be pseudo-polynomial. Note that at each iteration, both methods call an oracle 
solving a mean payoff game problem (for which the existence
of a polynomial time algorithm is an open question - only pseudo-polynomial algorithms are known). 
The pseudo-polynomial bound that we give for Newton
method is worse than the one concerning the bisection method, however,
for the former we also give a non pseudo-polynomial bound, involving the number
of strategies, which is better than the pseudo-polynomial bound
if the integers of the instance are very large. This is confirmed
by experiments, with a preliminary implementation, which indicates
that Newton method scales better as the size of the integers
grows. In addition, it has the advantage of maintaining feasibility,
and there are significant special instances in which it converges
in very few iterations. 

The Newton method of this paper appears as a natural product of the game-theoretic connection and the 
spectral function approach.
We further concentrate on the implementation of each Newton step by reduction to optimal path algorithms, and on the proof
of pseudo-polynomiality. This method could be 
also considered in the framework of more abstract Newton methods in a generalized domain, which also means 
making decent comparison with other Newton schemes, like~\cite{esparza:approximative}.
The comparison of Newton and bisection methods, as well as possible alternative approaches, also remain to be further examined. 

\paragraph{Acknowledgement}
The authors thank the anonymous reviewers for numerous important suggestions, which helped us to improve the presentation in this paper. The authors are also grateful to Peter Butkovi\v{c} for many useful discussions
concerning tropical linear programming
and tropical linear algebra. The first author thanks Xavier Allamigeon
and \'Eric Goubault for having shared with him their insights
on disjunctive invariants and static analysis.

\newcommand{\etalchar}[1]{$^{#1}$}

\end{document}